\newcommand{\R}{\mathbb{R}}
\newtheorem{theorem}{Theorem}[section]
\newtheorem{corollary}[theorem]{Corollary}
\newtheorem{lemma}[theorem]{Lemma}
\newtheorem{definition}[theorem]{Definition}
\newtheorem{remark}[theorem]{Remark}
\newtheorem{example}[theorem]{Example}
\let\OLDthebibliography\thebibliography
\renewcommand\thebibliography[1]{
	\OLDthebibliography{#1}
	\setlength{\parskip}{0pt}
	\setlength{\itemsep}{0pt plus 0.3ex}
}
\begin{document}

\title{On the hierarchical structure of Pareto critical sets}
\author[1]{Bennet Gebken}
\author[1]{Sebastian Peitz}
\author[1]{Michael Dellnitz}
\affil[1]{\normalsize Department of Mathematics, Paderborn University, Germany}

\maketitle

\begin{abstract}
	In this article we show that the boundary of the Pareto critical set of an unconstrained multiobjective optimization problem (MOP) consists of Pareto critical points of subproblems considering subsets of the objective functions. If the Pareto critical set is completely described by its boundary (e.g.~if we have more objective functions than dimensions in the parameter space), this can be used to solve the MOP by solving a number of MOPs with fewer objective functions. If this is not the case, the results can still give insight into the structure of the Pareto critical set. This technique is especially useful for efficiently solving many-objective optimization problems by breaking them down into MOPs with a reduced number of objective functions.
\end{abstract}

\section{Introduction}
In many applications the problem arises to optimize several functions at once. In production for example, a typical goal is to maximize the quality of a product and to minimize the production cost at the same time. If the individual goals are conflicting, there exists no single point that optimizes all objectives at once, such that scalar optimization theory cannot be applied. Instead, the goal is to compute the set of optimal compromises, the so-called \emph{Pareto set}, consisting of all \emph{Pareto optimal} points. The task of finding the Pareto set is called \emph{multiobjective optimization} (MOO).

A popular first-order necessary condition for Pareto optimality is given by Kuhn and Tucker \cite{KT1951}. It states that in a Pareto optimal point of an unconstrained MOP, there exists a convex combination of the gradients of the objective functions which is zero. Similar to scalar-valued optimization, the coefficients of this convex combination are called \emph{Karush-Kuhn-Tucker (KKT) multipliers}. Points for which such a convex combination exists are called \emph{Pareto critical}. Roughly speaking, this condition induces a (possibly set-valued) map from the standard simplex to the Pareto critical set. So the question arises whether it is possible to derive properties of the Pareto critical set from the set of KKT multipliers. In particular, we are interested in analyzing properties and computing the boundaries of the Pareto critical set. This is especially useful for MOPs with a large number of objectives, also known as \emph{many-objective optimization problems (MaOPs)}, where the Pareto critical sets can be very complicated and expensive to compute \cite{SLC11}. It is therefore of great interest to derive efficient methods to solve MaOPs, e.g.~by exploiting this structure. Until now, MaOPs are mainly being treated by evolutionary approaches \cite{ITN08}.

There already exist some results about the structure of Pareto sets. In \cite[Chapter 4]{P2017} relations between the boundary of the Pareto critical set and subsets of objectives are investigated for a special class of well-behaved objective functions. There the focus lies on the hierarchical structure of Pareto sets, meaning that every neglected objective function results in Pareto critical points that lie on the boundary of the previous problem.
In a more theoretical approach, De Melo showed that there is an open and dense subset of the set of all possible (smooth) objective functions $\mathcal{C}^{\infty}(\mathbb{R}^n,\mathbb{R}^k)$, where the Pareto critical set is a \emph{stratification} \cite{dM1976}. This means that the Pareto critical set of a generic smooth objective function is the union of submanifolds of $\mathbb{R}^n$, that the intersections of these manifolds behave nicely and that the boundaries of these manifolds are unions of lower dimensional manifolds. In other words, in a generic case the Pareto critical set is a manifold with boundaries and corners. In the case where all objective functions are convex (and there are less objective functions than there are dimensions in the parameter space) the Pareto set is diffeomorphic to a standard simplex and its facets correspond to Pareto sets where a certain number of objectives has been neglected (see \cite{S1973,SSS2012}). Lovison and Pecci extended this result in \cite{LP2014} by showing that for a dense class of smooth (nonconvex) objective functions, the (local) Pareto set is a Whitney stratification. Many solution methods for MOPs work in the objective space instead of the parameter space. Consequently, it can be of equal interest to investigate the boundary of the \emph{Pareto front}, i.e. the image of the Pareto set under the objective function. This has been done by Mueller-Gritschneider, Graeb and Schlichtmann in \cite{MGGS2009} (see also \cite{PKS2007,SIR2011}). A related approach is \emph{objective reduction} in the context of many-objective optimization, where the goal is to eliminate objective functions that are either redundant or have only minor influence the Pareto front (see e.g.~\cite{BZ2009,JCC2008,SDT2013}).

The goal of this article is to extend the results from \cite[Chapter 4]{P2017} to a much more general setting. We investigate the hierarchical structure of Pareto critical sets and study properties of the boundary. We show that the boundary of the standard simplex is mapped to a covering of the boundary of the Pareto critical set. Since at least one multiplier is zero on the boundary of the simplex, the boundary of the Pareto critical set can be calculated by omitting the objective functions corresponding to the zero multipliers. The number of functions that can be omitted depends on the rank of the Jacobian of the full objective function. 

The structure of this article is as follows: We start by giving a short introduction to MOO and concepts of topology in Section~\ref{sec:MO}. In Section~\ref{sec:StructureParetoSet} we first classify Pareto critical points by their respective KKT multipliers by differentiating between Pareto critical points with strictly positive KKT multipliers ($P_{\mathsf{int}}$) and critical points where all KKT multipliers have a zero somewhere ($P_0$). We show some results about the structure and relationships of those sets with varying regularity assumptions on our objective function. Since we will define the boundary of the Pareto critical set by properties of tangent cones, we then show some results about tangent cones of the Pareto critical set. An important technical result will be that if our MOP is regular enough, the tangent cone of the Pareto critical set is just the projection of the tangent space of the manifold of Pareto critical points, extended by their KKT multipliers, onto the parameter space. This will be used to prove the main result of Section~\ref{sec:StructureParetoSet}, which states that on the boundary of the Pareto critical set, (at least) one KKT multiplier is zero. In Section~\ref{sec:CalculationParetoSetViaSubproblems} we study how many KKT multipliers are zero, or in other words, how many components of the objective function have to be considered to compute the boundary of the Pareto critical set. The main result will be that the number of components required is equal to the maximal rank of the Jacobian on the Pareto critical set. In Section~\ref{sec:Conclusion}, we draw a conclusion and discuss further directions.

We conclude this introduction with a simple example to show the structure we want to investigate in this article. Consider the following convex MOP:
\begin{equation} \label{eq:motMOP}
\min_{x \in \mathbb{R}^2} f(x) = \min_{x \in \mathbb{R}^2} \begin{pmatrix}
f_1(x) \\
f_2(x) \\
f_3(x)
\end{pmatrix} = \min_{x \in \mathbb{R}^2} \begin{pmatrix}
(x_1 - 1)^2 + (x_2 + 1)^2 \\
x_1^2 + (x_2 - 1)^2 \\
(x_1 + 1)^2 + (x_2 + 1)^2
\end{pmatrix}.
\end{equation}
The objective functions are paraboloids, so the Pareto critical (and in this case Pareto optimal) set is given by the triangle with corners $(-1,-1)$, $(1,-1)$ and $(0,1)$. If we omit the third objective function, the Pareto optimal set of the resulting MOP is the line connecting $(-1,-1)$ and $(1,-1)$, so it is part of the boundary. In the same way we obtain the other parts of the boundary of the original Pareto critical set by omitting different objective functions. The situation is depicted in Figure \ref{fig:motivation}. The Pareto critical set of the subproblem $(f_1, f_2)$ is shown in red, $(f_1, f_3)$ in blue and $(f_2, f_3)$ in green.

\begin{figure}[ht] 
	\begin{subfigure}[t]{.5\textwidth}
		\centering
		\includegraphics[scale=0.5]{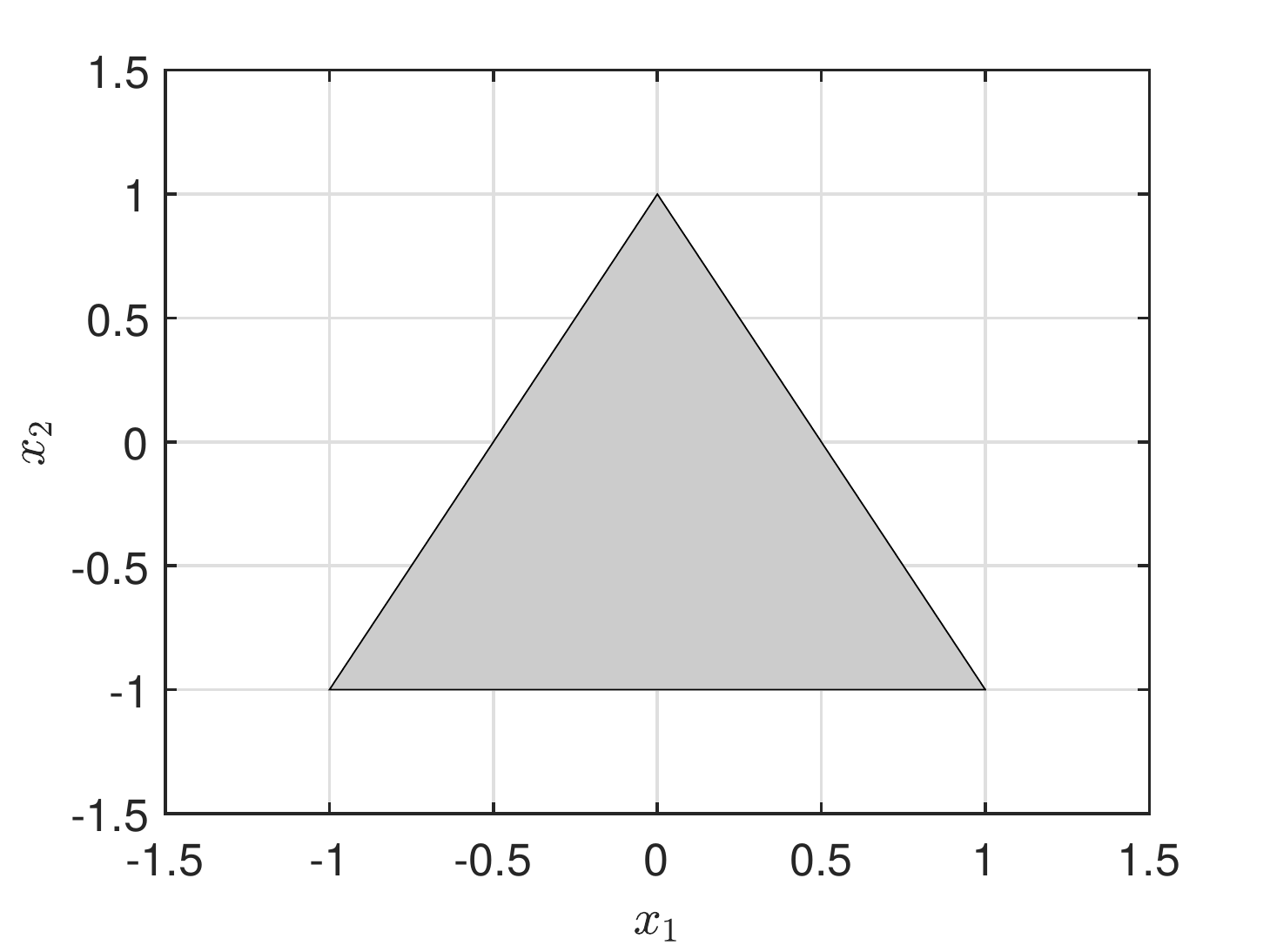}
	\end{subfigure}
	\begin{subfigure}[t]{.49\textwidth}
		\centering
		\includegraphics[scale=0.5]{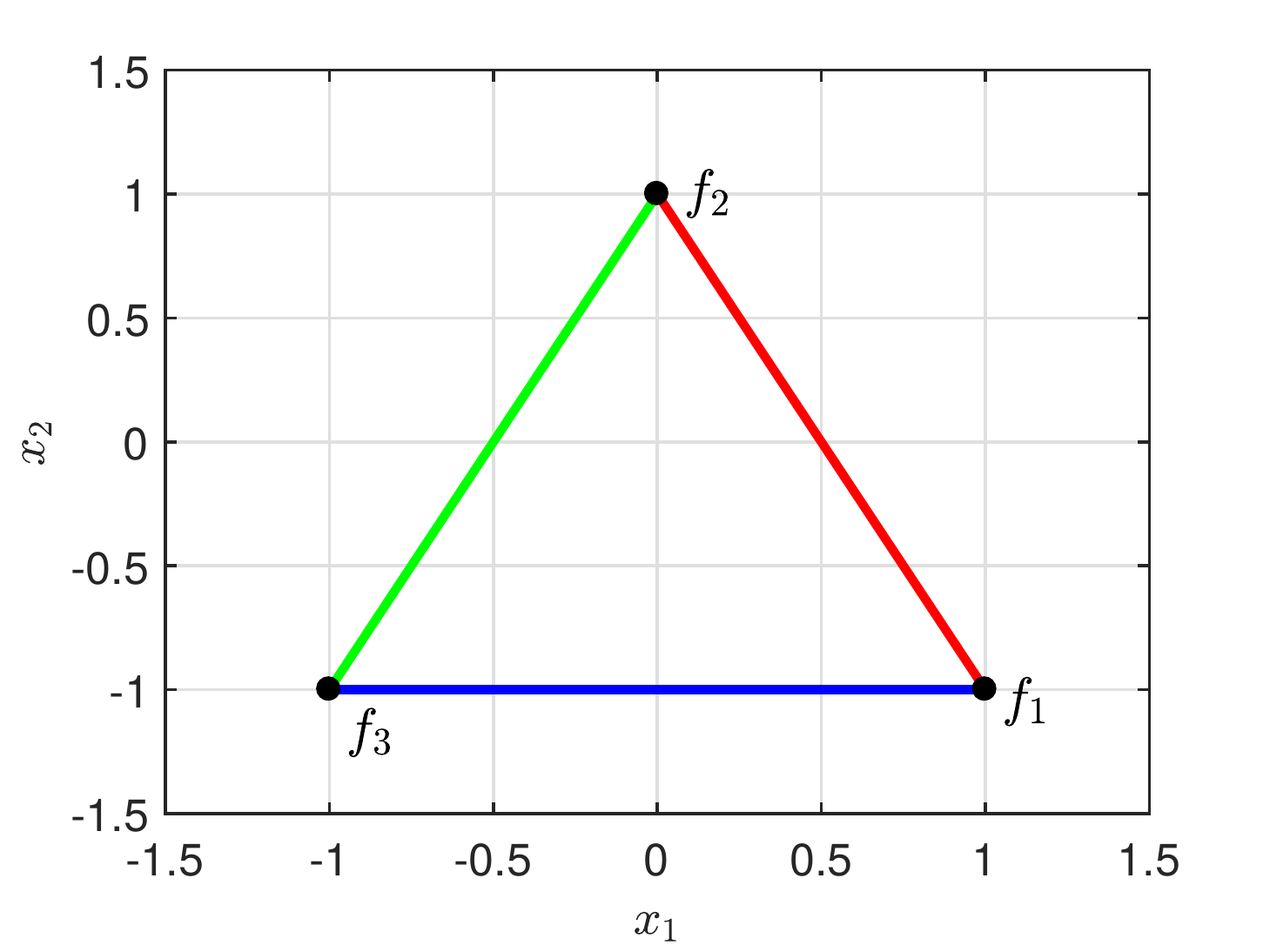}
	\end{subfigure}
	\caption{Pareto critical set of the MOP \eqref{eq:motMOP} (left) and Pareto critical sets of all 2-objective subproblems (right)}
	\label{fig:motivation}
\end{figure}

\section{Multiobjective optimization and topology} \label{sec:MO}

In this section we give a short introduction to MOO and some basic concepts of topology.

\subsection{Multiobjective optimization}

Consider the unconstrained multiobjective optimization problem
\begin{align}
	\min_{x\in\R^n} f(x) = \min_{x\in\R^n} \left( \begin{array}{c} f_1(x) \\ \vdots \\ f_k(x) \end{array} \right), \tag{MOP} \label{eq:MOP}
\end{align}
where $f: \R^n \rightarrow \R^k$ is a vector valued objective function with continuously differentiable components $f_i: \R^n \rightarrow \R$, $i = 1,...,k$.
The space of the parameters $x \in \mathbb{R}^n$ is called the \emph{parameter space} and the function $f$ is a mapping to the $k$-dimensional \emph{objective space}. In contrast to single objective optimization problems, there exists no obvious total order of the objective function values in $\mathbb{R}^k$ for $k \geq 2$ (unless the objectives are not conflicting). Therefore, the comparison of values is defined in the following way \cite{M1998}:
\begin{definition}
	Let $v, w \in \mathbb{R}^k$. The vector $v$ is \emph{less than} $w$ $(v <_p w)$, if $v_i < w_i$ for all $i \in \left\lbrace 1, \ldots, k \right\rbrace$. The relation $\leq_p$ is defined in an analogous way.
\end{definition}
A consequence of the lack of a total order is that we cannot expect to find isolated optimal points. Instead, the solution of \eqref{eq:MOP} is the set of optimal compromises, the so-called \emph{Pareto set} named after Vilfredo Pareto:
\begin{definition}
	\label{def:Pareto_optimality}
	\begin{itemize}
		\item[(a)] A point $x^* \in \R^n$ \emph{dominates} a point $x \in \R^n$, if $f(x^*) \leq_p f(x)$ and $f(x^*) \neq f(x)$.
		\item[(b)] A point $x^* \in \R^n$ is called \emph{(globally) Pareto optimal} if there exists no point $x \in \R^n$ dominating $x^*$.
		\item[(c)] The set of non-dominated points is called the \emph{Pareto set}, its image the \emph{Pareto front}.
	\end{itemize}
\end{definition}
Consequently, for each solution that is contained in the Pareto set, one can only improve one objective by accepting a trade-off in at least one other objective. A more detailed introduction to multiobjective optimization can be found in \cite{M1998,E2005}.

Similar to single objective optimization, a necessary condition for optimality is based on the gradients of the objective functions. In the multiobjective situation, the corresponding Karush-Kuhn-Tucker (KKT) condition is as follows \cite{KT1951}:
\begin{theorem} 
	Let $x^*$ be a Pareto optimal point of \eqref{eq:MOP}. Then there is some $\alpha \in (\mathbb{R}^{\geq 0})^k$ so that
	\begin{align}
		\sum_{i=1}^k \alpha_i \nabla f_i(x^*) = 0 \text{ and } \sum_{i=1}^k \alpha_i = 1 . \label{eq:MOP_optimality}
	\end{align}
\end{theorem}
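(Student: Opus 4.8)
The plan is to prove the contrapositive by constructing a common descent direction. First I would reformulate the conclusion in convex-geometric terms: asking for some $\alpha \in (\mathbb{R}^{\geq 0})^k$ with $\sum_{i=1}^k \alpha_i = 1$ and $\sum_{i=1}^k \alpha_i \nabla f_i(x^*) = 0$ is exactly the statement that the origin lies in the convex hull $C := \mathrm{conv}\{\nabla f_1(x^*), \ldots, \nabla f_k(x^*)\} \subseteq \mathbb{R}^n$ (the normalization $\sum_i \alpha_i = 1$ being automatic for convex-combination coefficients). So it suffices to show $0 \in C$ whenever $x^*$ is Pareto optimal, and I would do this by assuming $0 \notin C$ and exhibiting a point that dominates $x^*$, contradicting optimality.

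Second, I would produce the descent direction by a separation/projection argument. Since $C$ is the convex hull of finitely many points it is compact and convex, so if $0 \notin C$ there is a unique nearest point $p \in C$ to the origin with $p \neq 0$. The projection inequality $\langle p, g - p \rangle \geq 0$ for all $g \in C$ gives $\langle p, g \rangle \geq \|p\|^2 > 0$ for every $g \in C$; setting $d := -p$ then yields $\langle \nabla f_i(x^*), d \rangle \leq -\|p\|^2 < 0$ for all $i = 1, \ldots, k$ at once. Thus $d$ is a simultaneous descent direction for every objective.

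Third, I would turn this into a dominating point using first-order information. By continuous differentiability of each component, $f_i(x^* + t d) = f_i(x^*) + t \langle \nabla f_i(x^*), d \rangle + o(t)$ as $t \to 0^+$, and since each directional derivative is strictly negative and there are only finitely many objectives, there is a single threshold $\bar{t} > 0$ with $f_i(x^* + t d) < f_i(x^*)$ for all $i$ and all $t \in (0, \bar{t})$. Hence $f(x^* + t d) <_p f(x^*)$, so $x^* + t d$ dominates $x^*$, contradicting Pareto optimality. This forces $0 \in C$ and hands us the required multipliers $\alpha$.

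The whole dichotomy is in fact packaged cleanly by Gordan's theorem of the alternative applied to the Jacobian of $f$ at $x^*$, so the only genuine content is ruling out the alternative that a common descent direction exists, which is precisely what Pareto optimality forbids. The separation step is routine; the one point that must be handled with care is the uniformity of the step size $\bar{t}$ across the $k$ objectives, which relies essentially on finiteness of $k$ (with infinitely many objectives this part would require an additional compactness or uniformity hypothesis).
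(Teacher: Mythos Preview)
Your argument is correct and is the standard route to this result: reformulate the conclusion as $0 \in \mathrm{conv}\{\nabla f_1(x^*),\ldots,\nabla f_k(x^*)\}$, separate if not, and obtain a simultaneous first-order descent direction that contradicts Pareto optimality. The invocation of Gordan's alternative is apt, and your remark about needing only finitely many objectives to get a uniform step size $\bar t$ is the right point of care.

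As for comparison with the paper: there is nothing to compare. The paper does not prove this theorem; it quotes it as background, attributing it to Kuhn and Tucker \cite{KT1951}, and immediately moves on to define Pareto critical points. So your proposal supplies a proof where the paper supplies only a citation. If anything, you could streamline slightly by appealing to Gordan's theorem directly rather than redoing the projection argument by hand, but what you wrote is self-contained and fine.
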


Since this is only a necessary optimality condition, we introduce the following definition as a generalization of critical points in single objective optimization.

\begin{definition}
	Let $x \in \mathbb{R}^n$. If there is some $\alpha \in (\mathbb{R}^{\geq 0})^k$ such that Equation~\eqref{eq:MOP_optimality} holds, then $x$ is called \emph{Pareto critical} and $\alpha$ a \emph{KKT multiplier of} $x$. The set of Pareto critical points $P$ of \eqref{eq:MOP} is called the \emph{Pareto critical set}.
\end{definition}

The Pareto critical set contains first-order candidates for Pareto optimal points. It is the main object of interest in this article and some of its properties will be investigated in the following sections.

\subsection{Some concepts of topology}

Since we will use some topological terms in the following sections, we will introduce them for sake of completeness. 

\begin{definition} \label{def:topology}
Let $X \subseteq \mathbb{R}^n$.
\begin{enumerate}
\item[a)] $X^\circ := \{ x \in X : \exists \text{ neighborhood } U \text{ of } x \text{ with } U \subseteq X \}$ is the \emph{interior of} $X$.
\item[b)] $\overline{X} := \{ x \in \mathbb{R}^n : \exists (x_i)_i \in X \text{ with } \lim_{i \rightarrow \infty} x_i = x \}$ is the \emph{closure of} $X$.
\item[c)] $\partial X := \overline{X} \setminus X^\circ$ is the \emph{(topological) boundary of} X.
\end{enumerate}
\end{definition}

It is important to note that in general we are not looking at the topological boundary of Pareto critical sets. Instead we will look at its ``edge'' that will be defined in Section \ref{sec:StructureParetoSet}.

\section{The structure of the Pareto critical set} \label{sec:StructureParetoSet}
In this section we will investigate the structure of the Pareto critical set $P$ and extend the results from \cite{P2017}. There it was assumed that the rank of the Jacobian of the objective function $f$ has rank $k-1$ everywhere. (In particular, the assumption restricts the results to problems with $k \leq n + 1$.) Due to this, \eqref{eq:MOP} has a unique KKT multiplier $\alpha$ for every Pareto critical point $x^*$ (cf. Lemma \ref{lem:alphaUnique}) and we obtain a very nice hierarchical structure of the Pareto critical set (see Figure~\ref{fig:ExampleVierparabeln}).
\begin{figure}[ht] 
	\begin{subfigure}[t]{.33\textwidth}
		\centering
		\includegraphics[scale = 0.1]{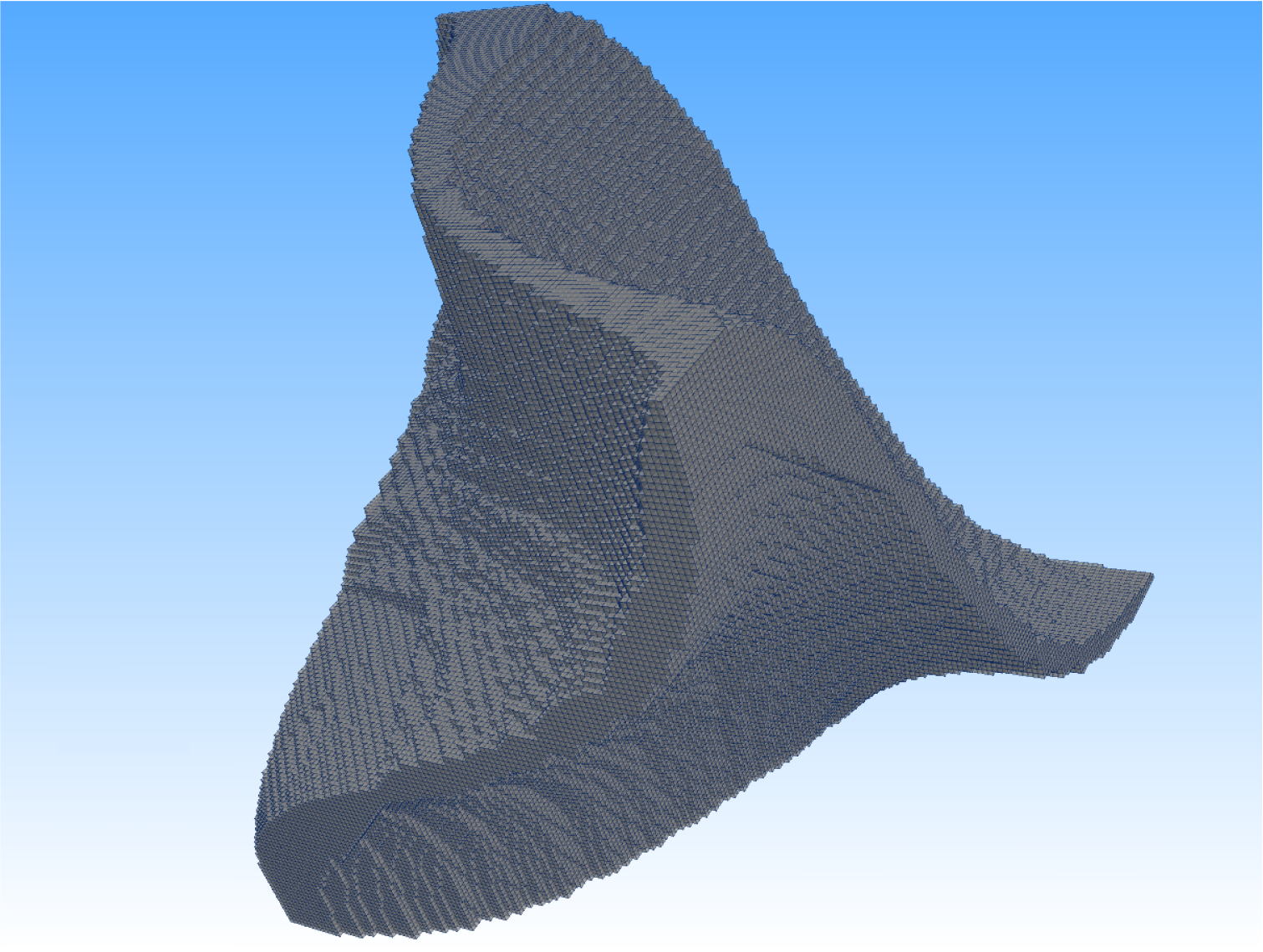}
		\caption{}
	\end{subfigure}
	\begin{subfigure}[t]{.33\textwidth}
		\centering
		\includegraphics[scale = 0.1]{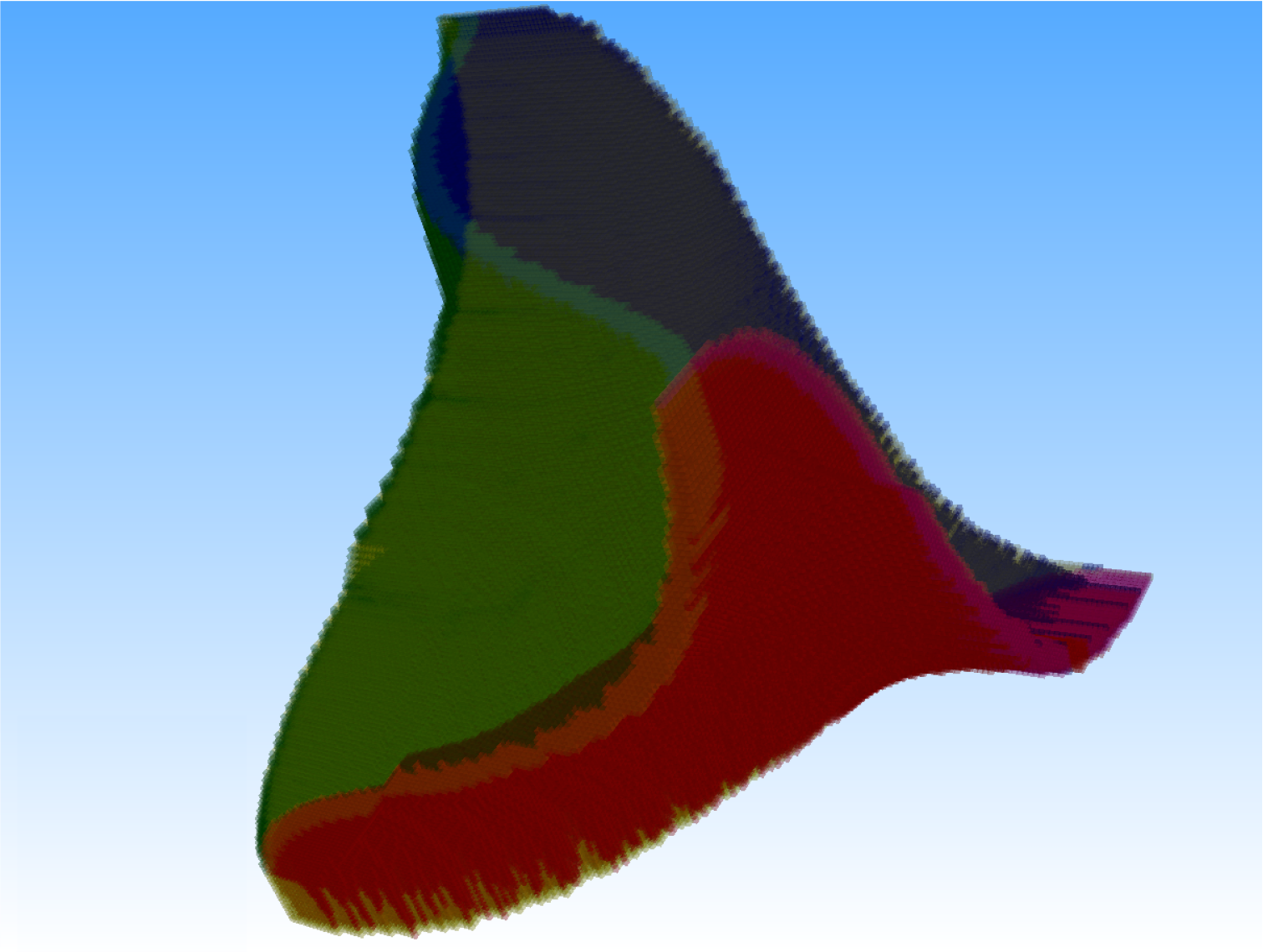}
		\caption{}
	\end{subfigure}
	\begin{subfigure}[t]{.33\textwidth}
		\centering
		\includegraphics[scale = 0.1]{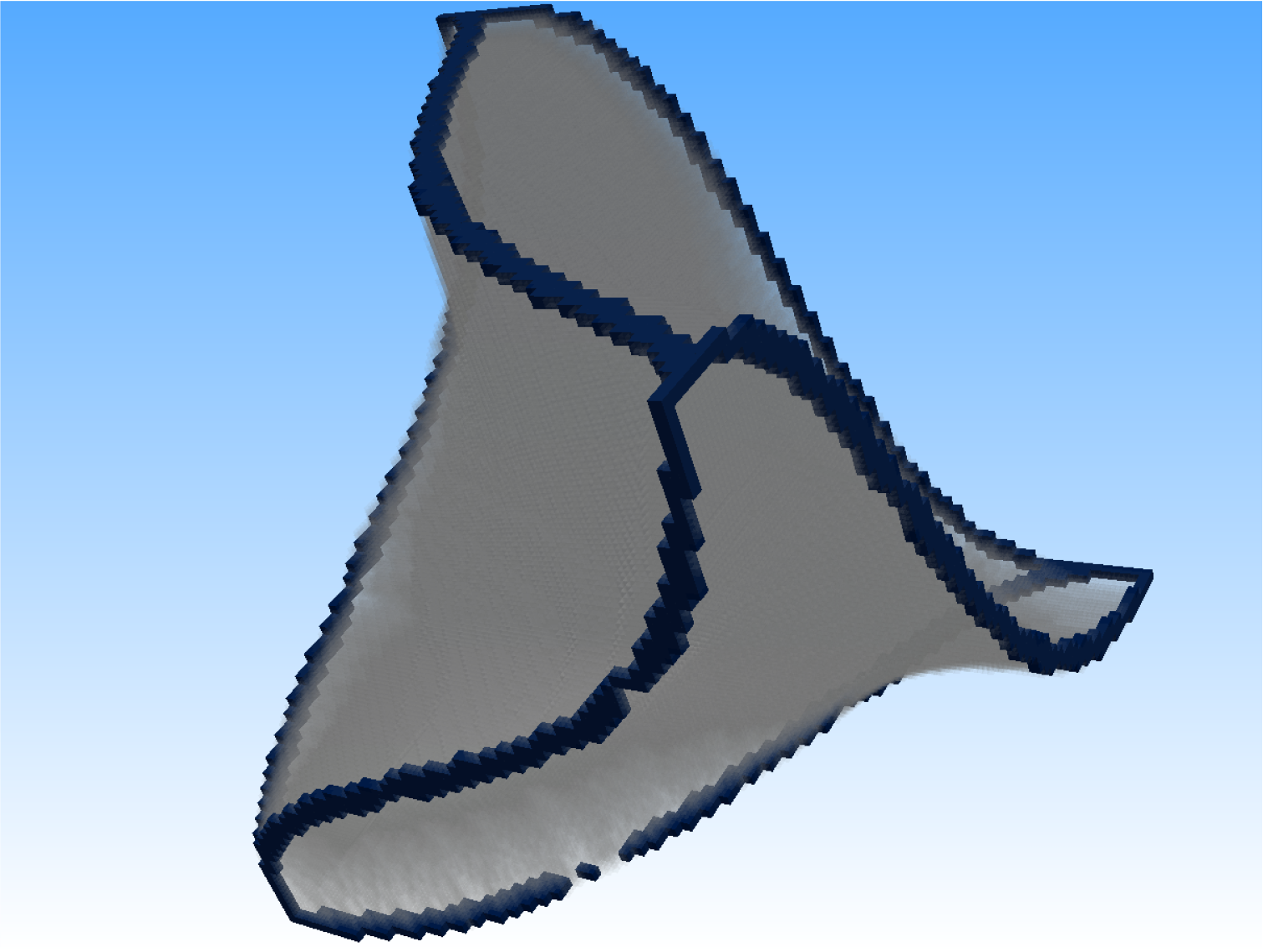}
		\caption{}
	\end{subfigure}
	\caption{(a) The Pareto set of a convex MOP with four objectives. (b) The union of the Pareto sets of the four subproblems with three objectives (shown in different colors) forms the boundary of the original Pareto set. (c) The Pareto sets of the six subproblems with two objectives are shown in blue.}
	\label{fig:ExampleVierparabeln}
\end{figure}

Here, we will generalize these results. We begin by classifying Pareto critical points by their respective KKT multipliers and then revisit the (slightly modified) result of Hillermeier \cite{H2001} about the manifold structure of the Pareto critical set extended by the set of KKT multipliers. This result will be used to show that under certain conditions, the tangent cone to the Pareto critical set is equal to the projection of the tangent space of the extended Pareto critical set onto the parameter space. This can be used to show that the edge of the Pareto set -- which is defined via the tangent cone -- is a subset of the set of Pareto critical points where (at least) one KKT multiplier is 0 (Theorem~\ref{thm:main1} on p.~\pageref{thm:main1}).

\subsection{Classifying P via KKT multipliers}

\subsubsection{$P_{\mathsf{int}}$ and $P_0$}

We begin by interpreting the KKT conditions \eqref{eq:MOP_optimality} as a nonlinear system of equations: Define $F: \mathbb{R}^n \times (\mathbb{R}^{\geq 0})^k \rightarrow \mathbb{R}^{n+1}$,
\begin{equation} \label{F}
	F(x,\alpha) := \begin{pmatrix}
		\sum_{i = 1}^k \alpha_i \nabla f_i(x) \\
		\sum_{i = 1}^k \alpha_i - 1
	\end{pmatrix}.
\end{equation}
Then $x \in \mathbb{R}^n$ is Pareto critical if there exists a KKT multiplier $\alpha \in (\mathbb{R}^{\geq 0})^k$ with $F(x,\alpha) = 0$. Let $pr_x$ be the projection onto the first $n$ components. The set of Pareto critical points $P$ is then given by
\begin{equation*}
	P = pr_x(F^{-1}( 0 )).
\end{equation*}
In order to investigate the structure of $P$, we take a closer look at the structure of the set of KKT multipliers. We distinguish between points $x \in P$ for which the corresponding KKT multipliers have at least one zero component and points that have a strictly positive KKT multiplier:
\begin{definition}\label{def:ParetoBoundaryInterior}
	For $k > 1$ let
	\begin{align*}
		P_0 := &\{ x \in P : \forall \alpha \in (\mathbb{R}^{\geq 0})^k \text{ with } F(x,\alpha) = 0 \text{ there is at least one } i \in \{1,...,k\} \\
		&\text{ with } \alpha_i = 0 \}
	\end{align*}
	and $P_{\mathsf{int}} := P \setminus P_0$.
\end{definition}
\begin{remark}
	The index $\mathsf{int}$ (for interior) has been chosen since we will show that $P_{\mathsf{int}}$ is in fact something like the ``interior'' of the Pareto critical set. Until now, $P_{\mathsf{int}}$ is only defined as the set of Pareto critical points that have a strictly positive KKT multiplier.
\end{remark}
Since for $k = 1$ we have to choose $\alpha_1 = 1$, this distinction does not make sense for single objective optimization which is why we assume $k > 1$ from now on.

\subsubsection{Geometrical properties}
The remainder of this section is dedicated to the question how $P_0$ and $P_{\mathsf{int}}$ are related to differential geometrical and topological properties of $P$.
We start by simplifying $F$. Obviously, if $F(x,\alpha) = 0$ we must have $\alpha_i = 1 - \sum_{j \neq i} \alpha_j$ for all $i \in \{1,...,k\}$, i.e. it suffices to consider $k-1$ entries of $\alpha$. Define
\begin{equation*}
	\Delta^{k-1} := \left\{ \alpha \in (\mathbb{R}^{>0})^{k-1} : \sum_{i = 1}^{k-1} \alpha_i < 1 \right\} 
\end{equation*}
with the closure
\begin{equation*}
	\overline{\Delta^{k-1}} = \left\{ \alpha \in (\mathbb{R}^{\geq 0})^{k-1} : \sum_{i = 1}^{k-1} \alpha_i \leq 1 \right\}
\end{equation*}
and $\tilde{F}: \mathbb{R}^n \times \overline{\Delta^{k-1}} \rightarrow \mathbb{R}^n$ via
\begin{align*}
	\tilde{F}(x,\alpha) &:= \sum_{i = 1}^{k - 1} \alpha_i \nabla f_i(x) + (1 - \sum_{i = 1}^{k-1} \alpha_i) \nabla f_k(x) \\
	&= \sum_{i = 1}^{k-1} \alpha_i (\nabla f_i(x) - \nabla f_k(x)) + \nabla f_k(x).
\end{align*}
Then $P_{\mathsf{int}} = pr_x((\tilde{F}|_{\mathbb{R}^n \times \Delta^{k-1}})^{-1}(0))$ by Definition~\ref{def:ParetoBoundaryInterior}. Since we want to differentiate $\tilde{F}$ (e.g.~to be able to apply the Implicit Function Theorem), we assume from now on that $f$ is twice continuously differentiable. Then the first derivatives of $\tilde{F}$ are
\begin{align*}
	&D\tilde{F}(x,\alpha) = \begin{pmatrix}
	D_x \tilde{F}(x,\alpha), & D_\alpha \tilde{F}(x,\alpha) 
	\end{pmatrix}
	\in \mathbb{R}^{n \times (n+k-1)}
\end{align*}
with
\begin{equation*}
	D_x \tilde{F}(x,\alpha) = \sum_{i = 1}^{k-1} \alpha_i \nabla^2 f_i(x) + (1 - \sum_{i = 1}^{k-1} \alpha_i) \nabla^2 f_k(x) \in \mathbb{R}^{n \times n}
\end{equation*}
and
\begin{equation*}
	D_\alpha \tilde{F}(x,\alpha) =  \begin{pmatrix} \nabla f_1(x) - \nabla f_k(x), \cdots, \nabla f_{k-1}(x) - \nabla f_k(x) \end{pmatrix}  \in \mathbb{R}^{n \times (k-1)}.
\end{equation*}
By construction $x$ is Pareto critical iff there exists some $\alpha \in \overline{\Delta^{k-1}}$ with $\tilde{F}(x,\alpha) = 0$. For easier notation we define by
\begin{equation} \label{eq:defA}
	A(x) := \{ \alpha \in \overline{\Delta^{k-1}} : \tilde{F}(x,\alpha) = 0 \}
\end{equation}
the set of valid (reduced) KKT multipliers for a fixed point $x\in P$. Consequently, $x$ is Pareto critical iff $A(x) \neq \emptyset$. 

The following lemma is a first topological result about the relation between $P_{\mathsf{int}}$ and $P$.
\begin{lemma} \label{lem:clIntP}
	If $D_x \tilde{F}(x,\alpha)$ is invertible for all $x \in P$ and $\alpha \in A(x)$, then 
	\begin{equation*}
		\overline{P_{\mathsf{int}}} = P
	\end{equation*}
	where $\overline{P_{\mathsf{int}}}$ is the closure of $P_{\mathsf{int}}$ in $\mathbb{R}^n$.
\end{lemma}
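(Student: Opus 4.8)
The plan is to prove the two inclusions $\overline{P_{\mathsf{int}}} \subseteq P$ and $P \subseteq \overline{P_{\mathsf{int}}}$ separately. For the first inclusion I would show that $P$ is closed, so that $\overline{P_{\mathsf{int}}} \subseteq \overline{P} = P$ follows from the obvious containment $P_{\mathsf{int}} \subseteq P$. Closedness of $P$ is a consequence of the compactness of the simplex $\overline{\Delta^{k-1}}$: since $\tilde{F}$ is continuous, the set $\tilde{F}^{-1}(0)$ is closed in $\mathbb{R}^n \times \overline{\Delta^{k-1}}$, and the projection $pr_x$ along the compact factor $\overline{\Delta^{k-1}}$ is a closed map. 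Hence $P = pr_x(\tilde{F}^{-1}(0))$ is closed.

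The heart of the proof is the reverse inclusion $P \subseteq \overline{P_{\mathsf{int}}}$. Take any $x_0 \in P$ and choose a multiplier $\alpha_0 \in A(x_0)$, so that $\tilde{F}(x_0, \alpha_0) = 0$. The defining formula for $\tilde{F}$ extends in a $C^1$ fashion to all of $\mathbb{R}^n \times \mathbb{R}^{k-1}$, and by hypothesis $D_x \tilde{F}(x_0, \alpha_0)$ is invertible. The Implicit Function Theorem therefore yields an open neighborhood $V \subseteq \mathbb{R}^{k-1}$ of $\alpha_0$ and a continuous (indeed $C^1$) map $g: V \to \mathbb{R}^n$ with $g(\alpha_0) = x_0$ and $\tilde{F}(g(\alpha), \alpha) = 0$ for all $\alpha \in V$.

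Next I would approximate $\alpha_0$ from inside the open simplex. Since $\overline{\Delta^{k-1}}$ is the closure of $\Delta^{k-1}$, the point $\alpha_0$ is a limit of interior multipliers, so $V \cap \Delta^{k-1} \neq \emptyset$ and I can pick a sequence $(\alpha_j)_j \subseteq V \cap \Delta^{k-1}$ with $\alpha_j \to \alpha_0$. Setting $x_j := g(\alpha_j)$ gives $\tilde{F}(x_j, \alpha_j) = 0$ with $\alpha_j \in \Delta^{k-1}$, whence $x_j \in P_{\mathsf{int}}$; and continuity of $g$ yields $x_j = g(\alpha_j) \to g(\alpha_0) = x_0$. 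Thus $x_0 \in \overline{P_{\mathsf{int}}}$, which completes the inclusion.

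I expect the main obstacle to be the second inclusion in the case $\alpha_0 \in \partial \Delta^{k-1}$ (equivalently $x_0 \in P_0$): one must produce interior multipliers whose associated solutions of $\tilde{F} = 0$ stay close to $x_0$. This is precisely where the invertibility assumption enters, since it guarantees --- via the Implicit Function Theorem --- that the solution $x$ depends continuously on $\alpha$ near $(x_0, \alpha_0)$, allowing one to ``slide'' along the solution set from the boundary of the simplex into its interior. The closedness argument for the first inclusion, by contrast, is routine.
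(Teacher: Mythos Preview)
Your proposal is correct and follows essentially the same approach as the paper. The inclusion $P \subseteq \overline{P_{\mathsf{int}}}$ is handled identically via the Implicit Function Theorem and an approximating sequence in $\Delta^{k-1}$; for the other inclusion you invoke the general fact that projection along a compact factor is closed, whereas the paper spells out the corresponding sequence argument (extract multipliers, pass to a convergent subsequence in $\overline{\Delta^{k-1}}$, use continuity of $\tilde{F}$), which is precisely the proof of that general fact in this setting.
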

\begin{proof}
	If $P = \emptyset$ we also have $P_{\mathsf{int}} = \emptyset$, so the assertion holds. Let $P \neq \emptyset$, $\bar{x} \in P$ and $\bar{\alpha} \in A(\bar{x})$. By our assumption $D_x \tilde{F}(\bar{x},\bar{\alpha})$ is invertible, so we can apply the Implicit Function Theorem to obtain neighborhoods $U \subseteq \mathbb{R}^{k-1}$ of $\bar{\alpha}$, $V \subseteq \mathbb{R}^n$ of $\bar{x}$ and a $C^1$-function $\phi : U \rightarrow V$ with $\tilde{F}(x,\alpha) = 0 \Leftrightarrow \phi(\alpha) = x$ for all $(x,\alpha) \in V \times U$. Now choose some sequence $(\alpha_i)_i \subseteq \Delta^{k-1} \cap U$ with $\alpha_i \rightarrow \bar{\alpha}$ and define $x_i := \phi(\alpha_i)$ $\forall i \in \mathbb{N}$. Since $(\alpha_i)_i \subseteq \Delta^{k-1}$ we have $(x_i)_i \subseteq P_{\mathsf{int}}$ and by continuity of $\phi$ we have $\lim_{i \rightarrow \infty} x_i = \lim_{i \rightarrow \infty} \phi(\alpha_i) = \phi(\bar{\alpha}) = \bar{x}$, hence $\bar{x} \in \overline{P_{\mathsf{int}}}$. It follows that $P \subseteq \overline{P_{\mathsf{int}}}$ and in particular $P_{\mathsf{int}} \neq \emptyset$. \\
	Let $(x_i)_i \subseteq P_{\mathsf{int}}$ be a sequence that converges to some $\bar{x}$ in $\mathbb{R}^n$. Then $(x_i)_i$ induces a sequence $(\alpha_i)_i$ with $\alpha_i \in A(x_i)$ for all $i \in \mathbb{N}$ and since $\Delta^{k-1}$ is bounded we can assume w.l.o.g.~that $\alpha_i \rightarrow \bar{\alpha} \in \overline{\Delta^{k-1}}$. Since $\tilde{F}$ is continuous we must have $\tilde{F}(\bar{x},\bar{\alpha}) = 0$ and  $\bar{x} \in P$. Thus $\overline{P_{\mathsf{int}}} \subseteq P$.
\end{proof}
If the premise of Lemma \ref{lem:clIntP} holds, $P_0$ can be thought of as a ``null set'' in the set $P$ or a set with a ``lower dimension'' than $P$. In a more general case this does not hold as we will see later (Example~\ref{ex:isolTan}). From now on assume that $P \neq \emptyset$. In particular, Lemma \ref{lem:clIntP} shows that $P_{\mathsf{int}} \neq \emptyset$ if $P \neq \emptyset$, so this assumption makes sure that we do not have to worry about the existence of points in $P_{\mathsf{int}}$.

We will now shift our view from topological to differential geometrical properties of $P$. An important and well-known result was given by Hillermeier \cite{H2001}, where it is shown that $\mathcal{M} := (\tilde{F}|_{\mathbb{R}^n \times \Delta^{k-1}})^{-1}(0)$ (with $pr_x(\mathcal{M}) = P_{\mathsf{int}}$) is a manifold.
\begin{theorem} \label{thm:Mmanifold}
Let $\mathcal{M} := (\tilde{F}|_{\mathbb{R}^n \times \Delta^{k-1}})^{-1}(0)$. If $rk(D \tilde{F}(x,\alpha)) = n$ for all $(x,\alpha) \in \mathcal{M}$, then $\mathcal{M}$ is a $(k-1)$-dimensional $C^2$-submanifold of $\mathbb{R}^{n+k-1}$ and $T_{(x,\alpha)} \mathcal{M} = ker(D \tilde{F}(x,\alpha))$. 
\end{theorem}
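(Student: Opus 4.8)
The plan is to recognize this statement as a direct application of the Regular Value Theorem (the preimage theorem for submersions): the rank hypothesis says precisely that $0 \in \R^n$ is a regular value of the restricted map $\tilde{F}|_{\R^n \times \Delta^{k-1}}$, and the conclusions about the manifold dimension and the tangent space are exactly what that theorem delivers. First I would record the setting in which the theorem applies. Since $\Delta^{k-1}$ is open in $\R^{k-1}$, the domain $\R^n \times \Delta^{k-1}$ is an open subset of $\R^{n+k-1}$, so $\mathcal{M}$ is being cut out inside an honest Euclidean chart with no boundary to worry about. Because $f$ is twice continuously differentiable, the gradients $\nabla f_i$ are $C^1$, and as $\tilde{F}$ depends on $x$ through these gradients and only linearly (hence smoothly) on $\alpha$, the restriction $\tilde{F}|_{\R^n \times \Delta^{k-1}}$ is a continuously differentiable map between open subsets of Euclidean spaces.

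Next I would translate the rank assumption into the regularity of the value $0$. The hypothesis $rk(D\tilde{F}(x,\alpha)) = n$ for every $(x,\alpha) \in \mathcal{M}$ means that the linear map $D\tilde{F}(x,\alpha) \colon \R^{n+k-1} \to \R^n$ is surjective at each such point; equivalently, $0$ is a regular value of $\tilde{F}|_{\R^n \times \Delta^{k-1}}$. It is worth emphasizing that one only needs surjectivity on the preimage $\mathcal{M} = (\tilde{F}|_{\R^n \times \Delta^{k-1}})^{-1}(0)$ itself, which is exactly what is assumed. Applying the Regular Value Theorem then yields that $\mathcal{M}$ is a submanifold of $\R^{n+k-1}$ whose dimension is the difference of domain and codomain dimensions, namely $(n+k-1) - n = k-1$, with the differentiability class of $\mathcal{M}$ inherited from that of $\tilde{F}$.

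For the tangent space identity $T_{(x,\alpha)}\mathcal{M} = ker(D\tilde{F}(x,\alpha))$, I would give the standard dimension-counting argument if a self-contained version is wanted: for any $C^1$ curve $\gamma$ in $\mathcal{M}$ with $\gamma(0) = (x,\alpha)$ one has $\tilde{F}(\gamma(t)) = 0$, and differentiating at $t=0$ gives $D\tilde{F}(x,\alpha)\,\dot{\gamma}(0) = 0$, whence $T_{(x,\alpha)}\mathcal{M} \subseteq ker(D\tilde{F}(x,\alpha))$. By the rank--nullity theorem $\dim ker(D\tilde{F}(x,\alpha)) = (n+k-1) - n = k-1 = \dim \mathcal{M} = \dim T_{(x,\alpha)}\mathcal{M}$, so the inclusion is forced to be an equality.

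I do not expect a genuine obstacle here, since the result is essentially the submersion theorem applied to $\tilde{F}$ and the remaining work is bookkeeping. The two points that actually require care are, first, that one works over the \emph{open} simplex $\Delta^{k-1}$ rather than $\overline{\Delta^{k-1}}$, which is what guarantees that $\mathcal{M}$ is a boundaryless submanifold instead of a manifold with boundary and corners; and second, the tracking of the differentiability order, since the smoothness class of $\mathcal{M}$ produced by the Regular Value Theorem matches that of $\tilde{F}$, and hence one should verify that the assumptions on $f$ supply enough derivatives to justify the stated $C^2$ regularity of $\mathcal{M}$.
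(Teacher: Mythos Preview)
Your approach is essentially identical to the paper's: both recognize that the rank hypothesis makes $0$ a regular value of $\tilde{F}|_{\mathbb{R}^n \times \Delta^{k-1}}$ and then invoke the submersion (regular value) theorem, with the tangent space description following from the standard kernel characterization. Your closing remark about the differentiability order is apt---at this point in the paper $f$ is only assumed $C^2$, hence $\tilde{F}$ is only $C^1$, so the stated $C^2$ regularity of $\mathcal{M}$ indeed requires the $C^3$ assumption on $f$ that the paper introduces shortly afterward.
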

\begin{proof}
By our assumption $0$ is a regular value of $\tilde{F}|_{\mathbb{R}^n \times \Delta^{k-1}}$. The assertion follows from the submersion theorem (see e.g. \cite{L2012}, Corollary 5.24 and Lemma 5.29).
\end{proof}

The case that is important for what we want to investigate in this article is the special case where $rk(D_x \tilde{F}(x,\alpha)) = n$ for all $(x,\alpha) \in \mathcal{M}$. This means that we can apply the Implicit Function Theorem to locally get a differentiable mapping between the set of KKT multipliers and the Pareto critical set. If the premise of Theorem \ref{thm:Mmanifold} holds but $rk(D_x \tilde{F}(x,\alpha)) < n$, $\mathcal{M}$ is a manifold but we are missing this local relationship. To show what can happen when the premise of Theorem \ref{thm:Mmanifold} fails to hold consider the following example.

\begin{example} \label{ex:MNoManifold}
	Consider the MOP $min_{x \in \mathbb{R}^2} f(x)$ with
	\begin{equation*}
		f(x) := \begin{pmatrix}
			-2 x_1 x_2 - 2 x_1^2 - 2 x_2 + x_2^2 \\
			x_1 x_2 + x_1^2 + x_2
		\end{pmatrix}.
	\end{equation*}
	For this problem, $\mathcal{M}$ can be calculated analytically:
	\begin{equation*}
		\mathcal{M} = (\mathbb{R} \times \{ 0 \} \times \{ 1/3 \}) \cup \left\{ \left(\frac{1 - 3 \alpha}{7 \alpha - 1}, -2 \frac{1 - 3 \alpha}{7 \alpha - 1}, \alpha \right)^{\top} : \alpha \in (0,1), \alpha \neq \frac{1}{7} \right\}.
	\end{equation*}
	We see in Figure \ref{fig:MNoManifold}~(a) that $\mathcal{M}$ is the union of three smooth curves and that there is an intersection at $(0,0,1/3)^{\top}$. Since the resulting ``cross'' at $(0,0,1/3)^{\top}$ is not diffeomorphic to an open subset of $\mathbb{R}$, $\mathcal{M}$ is not a manifold. With the same argument it follows that
	\begin{equation*}
		P_{\mathsf{int}} = pr_x(\mathcal{M}) = (\mathbb{R} \times \{ 0 \}) \cup \{ (t,-2t)^{\top} : t \in \mathbb{R} \setminus [-1,-1/3] \}
	\end{equation*} 
	is not a manifold (with a cross at $(0,0)^{\top}$), cf.~Figure~\ref{fig:MNoManifold}~(b).
\end{example}

\begin{figure}[ht] 
	\begin{subfigure}[t]{.5\textwidth}
		\centering
		\includegraphics[scale=0.55]{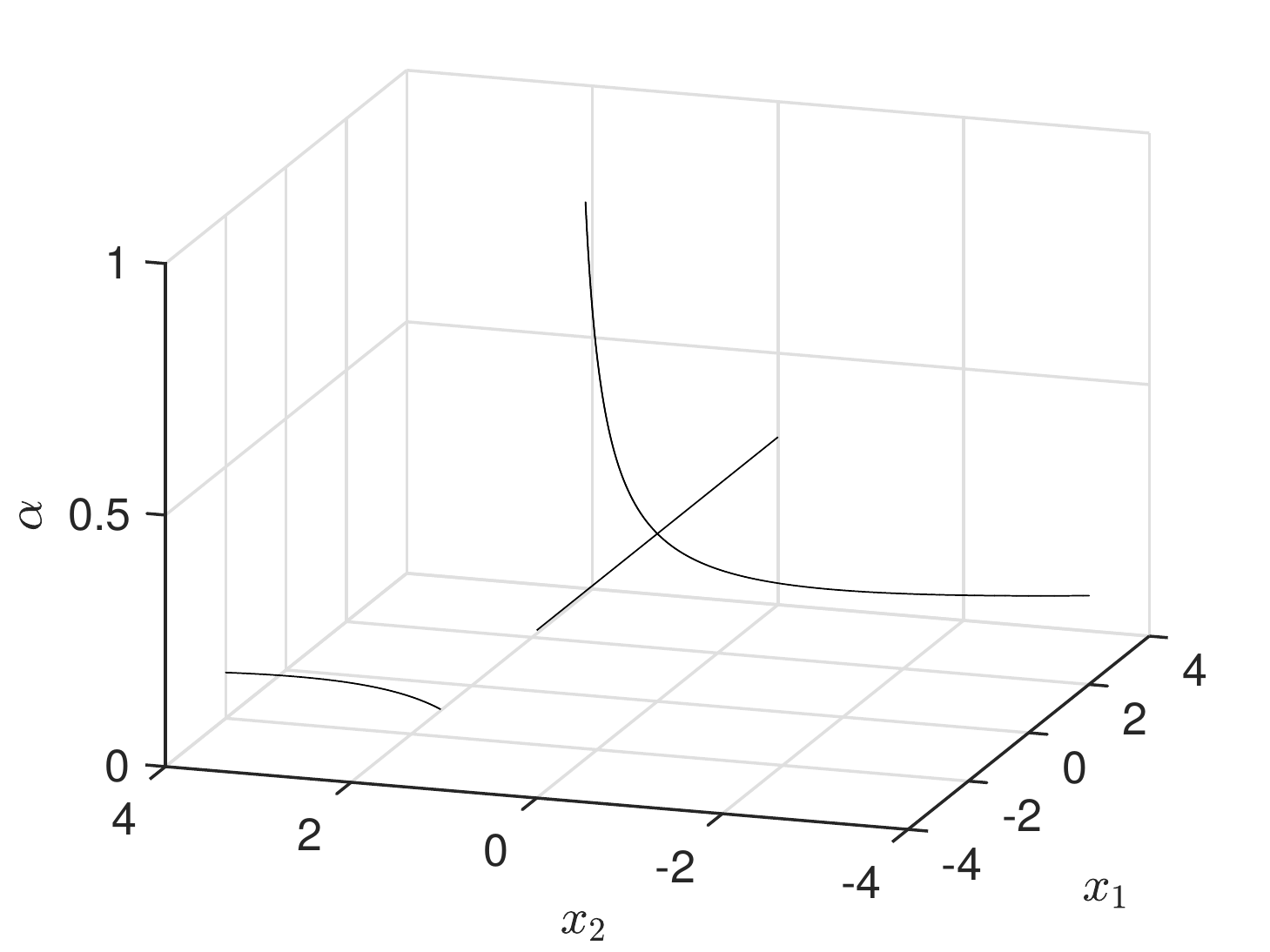}
		\caption{$\mathcal{M}$}
	\end{subfigure}
	\begin{subfigure}[t]{.49\textwidth}
		\centering
		\includegraphics[scale=0.55]{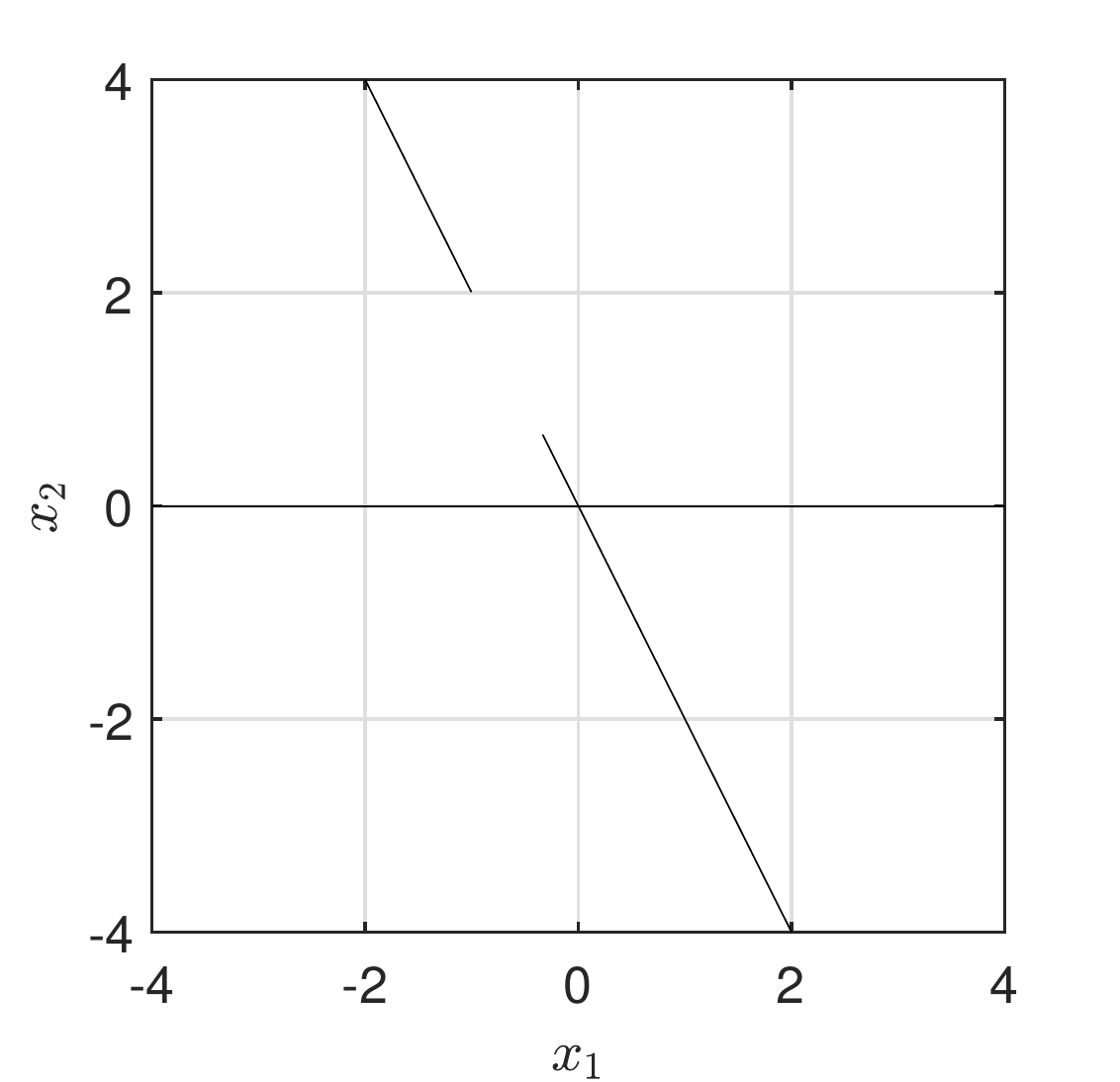}
		\caption{$P_{\mathsf{int}}$}
	\end{subfigure}
	\caption{$\mathcal{M}$ and $P_{\mathsf{int}}$ for Example \ref{ex:MNoManifold}.}
	\label{fig:MNoManifold}
\end{figure}

Observe that in Example \ref{ex:MNoManifold}, $\mathcal{M}$ could be made a manifold by removing a single point (in this case $(0,0,1/3)^{\top}$) from $\mathcal{M}$. 
This is made precise in the following lemma.
\begin{lemma} \label{lem:localManifold}
	Let $N := \{ (x,\alpha) \in \mathcal{M} : rk(D_x \tilde{F}(x,\alpha)) \leq n - 1 \}$. Then $\mathcal{M} \setminus N$ is a $(k-1)$-dimensional $C^2$-submanifold of $\mathbb{R}^{n+k-1}$ and $T_{(x,\alpha)} (\mathcal{M} \setminus N) = ker(D \tilde{F}(x,\alpha))$.
\end{lemma}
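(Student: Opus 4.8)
The plan is to reduce the statement to the submersion theorem already invoked in Theorem~\ref{thm:Mmanifold}, but applied on a suitable \emph{open} subdomain rather than on all of $\mathbb{R}^n \times \Delta^{k-1}$. First I would record a characterization of $\mathcal{M}\setminus N$: since $D_x\tilde{F}(x,\alpha)$ is a square $n\times n$ matrix, its rank on $\mathcal{M}$ is at most $n$, so
\[
	\mathcal{M}\setminus N = \{(x,\alpha)\in\mathcal{M} : rk(D_x\tilde{F}(x,\alpha)) = n\} = \{(x,\alpha)\in\mathcal{M} : D_x\tilde{F}(x,\alpha)\text{ invertible}\}.
\]
The crucial observation is that wherever the square block $D_x\tilde{F}$ is invertible, its columns already span $\mathbb{R}^n$, so the full Jacobian $D\tilde{F} = (D_x\tilde{F},\, D_\alpha\tilde{F})$ automatically has maximal rank $n$. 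In other words, every point of $\mathcal{M}\setminus N$ is a regular point of $\tilde{F}$, which is exactly the hypothesis we need.

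Next I would pass to the set
\[
	W := \{(x,\alpha)\in\mathbb{R}^n\times\Delta^{k-1} : \det D_x\tilde{F}(x,\alpha)\neq 0\}.
\]
This $W$ is open in $\mathbb{R}^{n+k-1}$, because $\mathbb{R}^n\times\Delta^{k-1}$ is open (recall $\Delta^{k-1}$ is defined by strict inequalities) and $\det D_x\tilde{F}$ is continuous; here the standing assumption that $f$ is twice continuously differentiable enters, guaranteeing that $D_x\tilde{F}$ and hence its determinant are continuous. By the characterization above, $\mathcal{M}\setminus N = W\cap\mathcal{M} = (\tilde{F}|_W)^{-1}(0)$, and by the rank argument of the previous paragraph $0$ is a regular value of $\tilde{F}|_W$. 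Applying the submersion theorem exactly as in the proof of Theorem~\ref{thm:Mmanifold} then shows that $(\tilde{F}|_W)^{-1}(0) = \mathcal{M}\setminus N$ is a $(k-1)$-dimensional $C^2$-submanifold of $\mathbb{R}^{n+k-1}$ with $T_{(x,\alpha)}(\mathcal{M}\setminus N) = ker(D\tilde{F}(x,\alpha))$.

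The only genuine subtlety — and the step I would be most careful about — is the passage from $\mathcal{M}$ to the open domain $W$: Theorem~\ref{thm:Mmanifold} cannot be invoked directly on $\mathbb{R}^n\times\Delta^{k-1}$, since $0$ need not be a regular value there (this is precisely what fails at the ``cross'' point in Example~\ref{ex:MNoManifold}). The remedy is to notice that $N$ is cut out by the \emph{closed} condition $\det D_x\tilde{F} = 0$, so that $\mathcal{M}\setminus N$ is the zero set of $\tilde{F}$ restricted to the open set $W$ on which $\tilde{F}$ really is a submersion; because the submersion theorem is local, it applies verbatim on $W$. Everything else is routine, and an equivalent argument via the Implicit Function Theorem is available: near any $(\bar{x},\bar{\alpha})\in\mathcal{M}\setminus N$ the invertibility of $D_x\tilde{F}$ lets one solve $\tilde{F}(x,\alpha)=0$ locally as a graph $x = \phi(\alpha)$, exhibiting $\mathcal{M}\setminus N$ as a $(k-1)$-dimensional graph with the claimed tangent space.
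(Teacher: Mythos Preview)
Your proposal is correct and follows essentially the same approach as the paper: identify an open subset of $\mathbb{R}^n\times\Delta^{k-1}$ on which $D_x\tilde{F}$ has full rank (hence $D\tilde{F}$ does too), and apply the submersion theorem there exactly as in Theorem~\ref{thm:Mmanifold}. The only cosmetic difference is that you argue openness via continuity of $\det D_x\tilde{F}$ (exploiting that the block is square), whereas the paper phrases it via lower semicontinuity of the rank function; both yield the same open set $W$, and the rest of the argument is identical.
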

\begin{proof}
	$N$ is closed since $rk \circ D_x \tilde{F}$ is lower semicontinuous as the composition of the lower semicontinuous function $rk: \mathbb{R}^{n \times n} \rightarrow \mathbb{N}$ (see e.g. \cite{L2013}) and the continuous function $D_x \tilde{F}$. This means $U := (\R^n \times \Delta^{k-1}) \setminus N$ is open and we can apply the Submersion Theorem to $\tilde{F}|_{U}$ as in the proof of Theorem \ref{thm:Mmanifold}.
\end{proof}


\begin{remark}
	It would be sufficient to remove the set $N' := \{ (x,\alpha) \in \mathcal{M} : rk(D \tilde{F}(x,\alpha)) \leq n - 1 \} \subseteq N$ from $\mathcal{M}$ if one is only interested in having a manifold structure on $\mathcal{M}$. This follows with the proof of Lemma \ref{lem:localManifold} by replacing $N$ with $N'$. But since we want to be able to apply the Implicit Function Theorem as described above, we remove $N$ instead of $N'$.
\end{remark}

In other words, if the set of points violating the rank condition $rk(D_x \tilde{F}(x,\alpha)) = n$ is removed, the remaining set is a manifold. This means that $\mathcal{M}$ is \emph{locally} a manifold in all points that satisfy the rank condition. This has also been observed by Hillermeier \cite{H2001} in a similar way.
 
Note that the previous results on manifolds only hold in the augmented $(x,\alpha)$ space. The following example shows that even if $rk(D_x \tilde{F}(x,\alpha)) = n$ everywhere, $P_{\mathsf{int}}$ does not have to be a manifold:
\begin{example} \label{ex:PnotManifold}
	Consider the MOP $\min_{x \in \mathbb{R}^2} f(x)$ with
	\begin{equation*}
		f(x) := \begin{pmatrix}
			x_1^2 + x_2^2 \\
			(x_1 - 1)^2 + (x_2 - 1)^4 \\
			(x_1 - 2)^2 + (x_2 - 2)^2
		\end{pmatrix}.
	\end{equation*}
	The Pareto critical set can be calculated analytically and is shown in Figure \ref{fig:PnotManifold}. $P_{\mathsf{int}}$ is given by the gray area united with the points $(1,1)^{\top}$, $(1 - \frac{1}{\sqrt{2}},1 - \frac{1}{\sqrt{2}})^{\top} \approx (1.7071, 1.7071)^{\top}$ and $(1 + \frac{1}{\sqrt{2}},1 + \frac{1}{\sqrt{2}})^{\top} \approx (0.2929, 0.2929)^{\top}$. Due to these additional points, $P_{\mathsf{int}}$ is not a manifold even though $\mathcal{M}$ is.
	\begin{figure}
		\centering
		\includegraphics[scale=0.6]{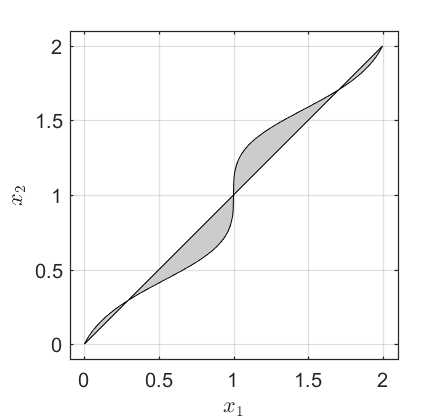}
		\caption{Pareto critical set for Example \ref{ex:PnotManifold}.}
		\label{fig:PnotManifold}
	\end{figure}
\end{example}

\subsection{Tangent cones and the uniqueness of KKT multipliers}
The goal of this section is to show that $P_0$ contains the ``boundary'' or ``edge'' of $P$. To this end, we will from now on assume that 
\begin{equation} \label{eq:assum_DxF_reg}
	rk(D_x \tilde{F}(x,\alpha)) = n \text{ for all } (x,\alpha) \in \mathcal{M},
\end{equation}
such that that $\mathcal{M}$ is a manifold with the properties stated in Theorem \ref{thm:Mmanifold}. By ``edge'' we mean the set of points in $P$ that have a tangent vector (with respect to $P$) whose negation is not a tangent vector. It will be formally defined at a later point (Definition \ref{def:P_E}). Tangent vectors are elements of the \emph{tangent cone} which is defined as follows in the general case:

\begin{definition} \label{def:tanCone}
	Let $Y \subseteq \mathbb{R}^n$ and $x \in \mathbb{R}^n$. Then
	\begin{equation*}
		Tan(Y,x) := \left\{ v \in \mathbb{R}^n : \exists (v_i)_i \subseteq \mathbb{R}^n \setminus \{ 0 \}: v_i \rightarrow 0, x + v_i \in Y, \frac{v_i}{\| v_i \|} \rightarrow \frac{v}{\| v \|}  \right\} \cup \{ 0 \}
	\end{equation*}
	is the \emph{tangent cone of} $Y$ \emph{at} $x$.
\end{definition}

The tangent cone $Tan(Y,x)$ contains the directions in $x$ that point into or alongside $Y$. Note that in contrast to the tangent space of a manifold, we do not need any structure to define the tangent cone of a set. The following lemma will be used in a later proof and shows that if $D_x \tilde{F}(x,\alpha)$ is also invertible for $x \in P$ and $\alpha \in \partial A(x)$ (the boundary of $A(x)$ in $\mathbb{R}^{k-1}$), then there are no ``isolated'' directions in $Tan(P,x)$.
\begin{lemma} \label{lem:clIntPTan}
	If $D_x \tilde{F}(x,\alpha)$ is invertible for all $x \in P$ and $\alpha \in A(x)$ then 
	\begin{equation*}
		Tan(P_{\mathsf{int}},x_0) = Tan(P,x_0) \quad \forall x_0 \in P_{\mathsf{int}}.
	\end{equation*}
\end{lemma}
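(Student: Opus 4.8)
The plan is to prove the two inclusions separately. Since $P_{\mathsf{int}}\subseteq P$, every sequence witnessing a direction in $Tan(P_{\mathsf{int}},x_0)$ is also a valid sequence for $Tan(P,x_0)$, so $Tan(P_{\mathsf{int}},x_0)\subseteq Tan(P,x_0)$ is immediate from Definition~\ref{def:tanCone}. The entire content lies in the reverse inclusion. So fix $x_0\in P_{\mathsf{int}}$ and a nonzero $v\in Tan(P,x_0)$ (the direction $0$ lies in every tangent cone). By Definition~\ref{def:tanCone} there is a sequence $(v_i)_i\subseteq\R^n\setminus\{0\}$ with $v_i\to0$, $x_0+v_i\in P$ and $v_i/\|v_i\|\to v/\|v\|$. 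For each $i$ I would pick some $\alpha_i\in A(x_0+v_i)$; since $\overline{\Delta^{k-1}}$ is compact we may pass to a subsequence with $\alpha_i\to\alpha^*\in\overline{\Delta^{k-1}}$, and continuity of $\tilde F$ gives $\tilde F(x_0,\alpha^*)=0$, i.e. $\alpha^*\in A(x_0)$.

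Next I would localize via the Implicit Function Theorem exactly as in the proof of Lemma~\ref{lem:clIntP}. The hypothesis guarantees that $D_x\tilde F(x_0,\alpha^*)$ is invertible; it is essential here that invertibility is assumed on all of $A(x_0)$, including multipliers $\alpha^*$ lying on $\partial\Delta^{k-1}$. Hence there are neighbourhoods $V^*$ of $x_0$ and $U^*$ of $\alpha^*$ and a $C^1$-map $\phi^*:U^*\to V^*$ with $\tilde F(x,\alpha)=0\Leftrightarrow x=\phi^*(\alpha)$ on $V^*\times U^*$ and $\phi^*(\alpha^*)=x_0$. Because $v_i\to0$ and $\alpha_i\to\alpha^*$, for all large $i$ we have $(x_0+v_i,\alpha_i)\in V^*\times U^*$, so that $x_0+v_i=\phi^*(\alpha_i)$.

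The key step is to push the (possibly boundary) multipliers $\alpha_i$ into the open simplex without disturbing the limiting direction. Since $x_0\in P_{\mathsf{int}}$, Definition~\ref{def:ParetoBoundaryInterior} provides a strictly interior multiplier $\bar\alpha\in A(x_0)\cap\Delta^{k-1}$. For $t\in(0,1]$ the convex combination $(1-t)\alpha_i+t\bar\alpha$ lies in $\Delta^{k-1}$: every component is $\geq t\bar\alpha_j>0$, and the component sum is $\leq(1-t)+t\sum_j\bar\alpha_j<1$ because $\sum_j\bar\alpha_j<1$. For each large $i$ I would then choose $t_i\in(0,1]$ so small that $\tilde\alpha_i:=(1-t_i)\alpha_i+t_i\bar\alpha\in U^*$ and, by continuity of $\phi^*$ at the fixed argument $\alpha_i$, also $\|\phi^*(\tilde\alpha_i)-\phi^*(\alpha_i)\|\leq\|v_i\|/i$. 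Setting $w_i:=\phi^*(\tilde\alpha_i)-x_0$, the pair $(\phi^*(\tilde\alpha_i),\tilde\alpha_i)$ solves $\tilde F=0$ with $\tilde\alpha_i\in\Delta^{k-1}$, hence $(\phi^*(\tilde\alpha_i),\tilde\alpha_i)\in\mathcal M$ and $x_0+w_i=\phi^*(\tilde\alpha_i)\in pr_x(\mathcal M)=P_{\mathsf{int}}$.

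It then remains to verify the three tangent-cone requirements for $(w_i)_i$. Writing $w_i=v_i+e_i$ with $\|e_i\|=\|\phi^*(\tilde\alpha_i)-\phi^*(\alpha_i)\|\leq\|v_i\|/i$, one gets $w_i\to0$, $\|w_i\|\geq(1-1/i)\|v_i\|>0$ for large $i$, and $w_i/\|v_i\|=v_i/\|v_i\|+e_i/\|v_i\|\to v/\|v\|$; since $\|w_i\|/\|v_i\|\to1$ this yields $w_i/\|w_i\|\to v/\|v\|$, so $v\in Tan(P_{\mathsf{int}},x_0)$ and the argument closes. I expect the main obstacle to be precisely this quantitative last part: the nearby critical points $x_0+v_i$ realizing the direction $v$ may all lie in $P_0$, and the local parametrization $\phi^*$ need not be an immersion, since the block $D_\alpha\tilde F$ may be rank deficient. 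Consequently a fixed-size perturbation of $\alpha_i$ could alter the limiting direction whenever $v$ arises only from higher-order behaviour of $\phi^*$. Choosing $t_i$ adaptively small relative to $\|v_i\|$ is what forces $\phi^*(\tilde\alpha_i)$ to stay within relative error $1/i$ of $x_0+v_i$, and therefore preserves the direction irrespective of any rank assumption on $D_\alpha\tilde F$.
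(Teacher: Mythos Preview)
Your argument is correct. The paper's proof, however, takes a shorter route: instead of passing to a convergent subsequence of multipliers, applying the Implicit Function Theorem at $(x_0,\alpha^*)$, and then perturbing each $\alpha_i$ toward a fixed interior multiplier $\bar\alpha$, it simply invokes Lemma~\ref{lem:clIntP} (which gives $\overline{P_{\mathsf{int}}}=P$) to approximate each point $x_0+v_i\in P$ by a point $x_0+w_i\in P_{\mathsf{int}}$ with $\|w_i-v_i\|$ small relative to $\|v_i\|$, and then verifies $w_i/\|w_i\|\to v/\|v\|$ via a direct triangle-inequality estimate. Your construction is more explicit---you actually produce the interior approximants as $\phi^*(\tilde\alpha_i)$ with $\tilde\alpha_i=(1-t_i)\alpha_i+t_i\bar\alpha\in\Delta^{k-1}$---but this extra machinery essentially reproves, in a pointwise quantitative form, exactly the density statement that Lemma~\ref{lem:clIntP} already provides. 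The paper's route is cleaner because it factors through that prior lemma; yours is self-contained and makes transparent where the approximating interior points come from, at the cost of some additional bookkeeping (the subsequence extraction and the adaptive choice of $t_i$).
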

\begin{proof}
	We obviously have $Tan(P_{\mathsf{int}},x_0) \subseteq Tan(P,x_0)$ since $P_{\mathsf{int}} \subseteq P$. Let $v \in Tan(P,x_0) \setminus \{ 0 \}$. Then there exists a sequence $(v_i)_i \in \mathbb{R}^n \setminus \{ 0 \}$ as in Definition \ref{def:tanCone}. Let $d_i := \| v \| \frac{v_i}{\| v_i \|}$ and $t_i = \frac{\| v_i \|}{\| v \|}$. Then $x_0 + t_i d_i \in P$ $\forall i \in \mathbb{N}$. Since $\overline{P_{\mathsf{int}}} = P$ by Lemma \ref{lem:clIntP}, we have a sequence $(z_i)_i$ with $\| z_i - d_i \| \rightarrow 0$ and $x_0 + t_i z_i \in P_{\mathsf{int}}$. Define $w_i := t_i z_i$. Then $x_0 + w_i \in P_{\mathsf{int}}$, $w_i = (z_i - d_i + d_i) t_i = (z_i - d_i) t_i + t_i d_i \rightarrow 0$ and $\frac{w_i}{\| w_i \|} = \frac{z_i}{\| z_i \|}$. Finally,
	\begin{equation*}
		\lim_{i \rightarrow \infty} \frac{d_i}{\| d_i \|} = \lim_{i \rightarrow \infty} \frac{v_i}{\| v_i \|} = \frac{v}{\| v \|},
	\end{equation*}
	and 
	\begin{align*}
		\left\| \frac{v}{\| v \|} - \frac{z_i}{\| z_i \|} \right\| 
		&\leq \left\| \frac{v}{\| v \|} - \frac{d_i}{\|d_i\|} \right\| + \left\| \frac{d_i}{\|d_i\|} -\frac{z_i}{\|z_i\|} \right\| \\
		&= \left\| \frac{v}{\| v \|} - \frac{d_i}{\|d_i\|} \right\| + \frac{1}{\| d_i \| \| z_i \|} \Big\| \|z_i\| d_i - \|d_i\| z_i \Big\| \\
		&\leq \left\| \frac{v}{\| v \|} - \frac{d_i}{\|d_i\|} \right\| + \frac{1}{\| d_i \| \| z_i \|} \Big( \|z_i\| \|d_i - z_i\| + \|z_i\| \Big( \|z_i\| - \|d_i\| \Big) \Big).
	\end{align*}
	Consequently,
	\begin{equation*}
		\lim_{i \rightarrow \infty} \frac{z_i}{\|z_i\|} = \frac{v}{\| v \|},
	\end{equation*}
	since $\lim_{i \rightarrow \infty} \| d_i \| > 0$ and $\| z_i - d_i \| \rightarrow 0$.
\end{proof}

In order to show the irregularities that may occur in MOPs when $D_x \tilde{F}(x,\alpha)$ is not invertible for $x \in P$ and $\alpha \in \partial A(x)$, we will now give an example where Lemma~\ref{lem:clIntPTan} (and Lemma~\ref{lem:clIntP}) does not hold.

\begin{example} \label{ex:isolTan}
	Consider the MOP  $\min_{x \in \mathbb{R}^2} f(x)$ with 
	\begin{equation*}
		f(x) := \begin{pmatrix}
			x_1^2 + (x_2 - 1)^2 \\
			x_1^2 + (x_2 + 1)^2 \\
			x_2^2 
		\end{pmatrix}, \quad
		Df(x) = \begin{pmatrix}
			2 x_1 & 2 (x_2 - 1) \\
			2 x_1 & 2 (x_2 + 1) \\
			0     & 2 x_2 
		\end{pmatrix}.
	\end{equation*}
	Let $F$ be as in (\ref{F}), i.e. 
	\begin{align*}
		F(x,\alpha) &= \begin{pmatrix}
			2 x_1 \alpha_1 + 2 x_1 \alpha_2 \\
			2(x_2 - 1) \alpha_1 + 2(x_2 + 1) \alpha_2 + 2 x_2 \alpha_3 \\
			\alpha_1 + \alpha_2 + \alpha_3 - 1
		\end{pmatrix} \\
		&= \begin{pmatrix}
			2 x_1 (\alpha_1 + \alpha_2) \\
			2 x_2 (\alpha_1 + \alpha_2 + \alpha_3) + 2 (\alpha_2 - \alpha_1) \\
			\alpha_1 + \alpha_2 + \alpha_3 - 1
		\end{pmatrix}.
	\end{align*}
	It is easy to see that $P = (\mathbb{R} \times \{ 0 \}) \cup (\{ 0 \} \times [-1,1])$. We will now determine $P_{\mathsf{int}}$ and $P_0$ for this MOP:
	\begin{itemize}
		\item $x_1 \neq 0$: $F(x,\alpha) = 0$ iff $x \in \mathbb{R} \times \{ 0 \}$ and $\alpha = (0,0,1)^{\top}$.
		\item $x_1 = 0$: $F(x,\alpha) = 0$ iff $x \in \{ 0 \} \times [-1,1]$ and $\alpha \in \{ (\lambda_1,\lambda_1 - x_2,1 - 2 \lambda_1 + x_2)^{\top} : \lambda_1 \in [0,1] \cap [x_2,1+x_2] \cap [\frac{x_2}{2},\frac{1+x_2}{2}] \}$. 
	\end{itemize}
	Thus, $P_{\mathsf{int}} = \{ 0 \} \times (-1,1)$ and $P_0 = (\mathbb{R} \times \{ 0 \}) \setminus \{(0,0)^{\top}\} \cup \{(0,1)^{\top}, (0,-1)^{\top} \}$, as depicted in Figure \ref{fig:isolTan}. Consequently, $\overline{P_{\mathsf{int}}} = \{ 0 \} \times [-1,1] \neq P$. In $(0,0)^{\top}$ we have $Tan(P_{\mathsf{int}},(0,0)^{\top}) = \{ 0 \} \times \mathbb{R}$ and $Tan(P,(0,0)^{\top}) = ( \{ 0 \} \times \mathbb{R} ) \cup ( \mathbb{R} \times \{ 0 \} )$, hence $Tan(P_{\mathsf{int}},(0,0)^{\top}) \neq Tan(P,(0,0)^{\top})$. The reason for this is that $\nabla^2 f_3(x) = \begin{pmatrix}
		0 & 0 \\
		0 & 2
	\end{pmatrix}$, 
	so $D_x \tilde{F}((0,0)^{\top},(0,0,1)^{\top}) = \nabla^2 f_3(x)$ is not invertible.
	\begin{figure}
		\centering
		\includegraphics[scale=0.7]{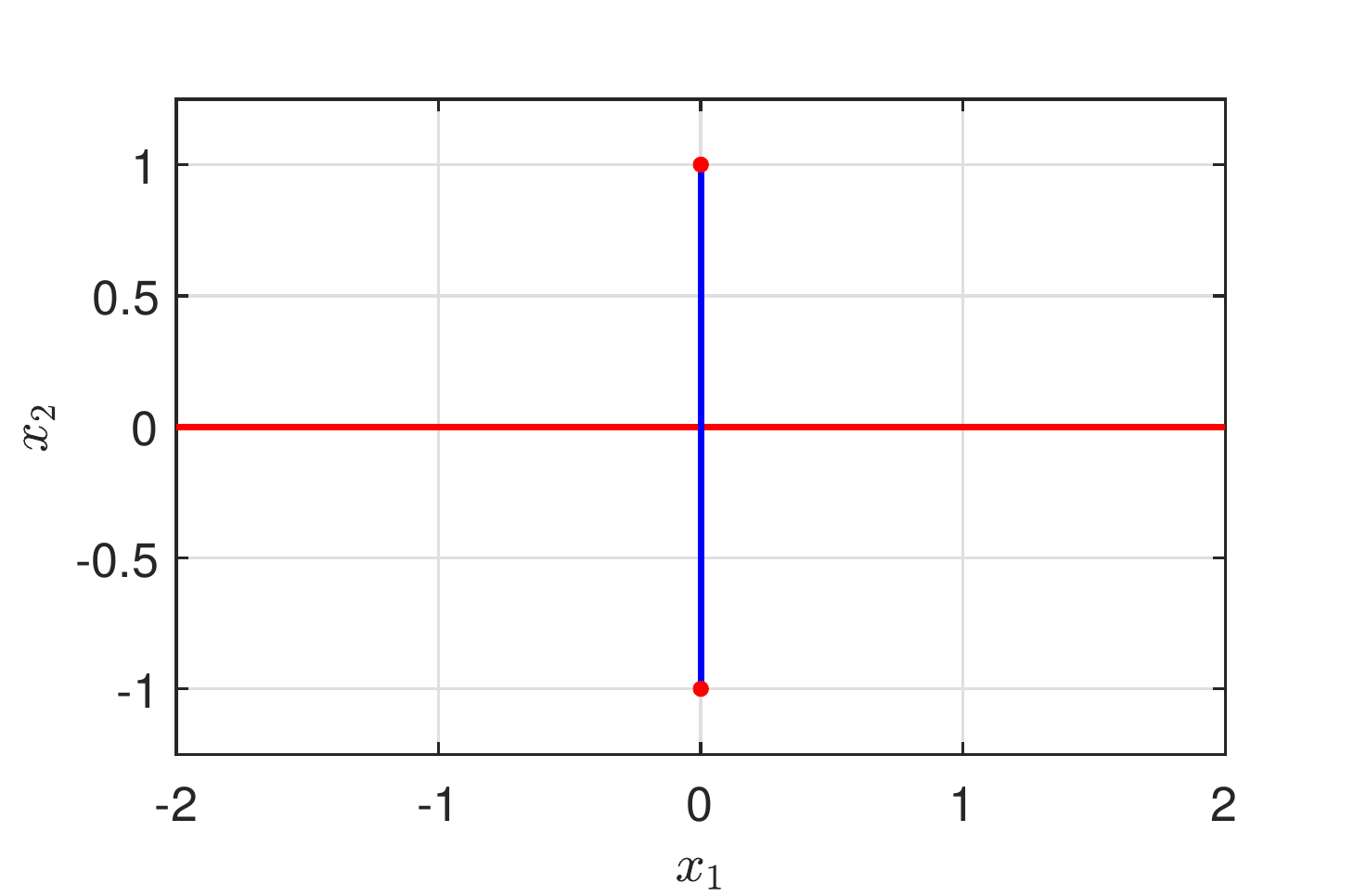}
		\caption{$P_{\mathsf{int}}$ (blue) and $P_0$ (red) for Example \ref{ex:isolTan}.}
		\label{fig:isolTan}
	\end{figure}
\end{example}

We will now take a closer look at the relationship between $Tan(P_{\mathsf{int}},x) = Tan(pr_x(\mathcal{M}),x)$ and $pr_x(T_p \mathcal{M})$ for $p = (x,\alpha) \in \mathcal{M}$. Note that for a general submanifold $\mathcal{M}$ of $\mathbb{R}^n$, we have $Tan(pr_x(\mathcal{M}),x) \neq pr_x(T_p \mathcal{M})$. For example, if $\mathcal{M} = S^1$ is the sphere in $\mathbb{R}^2$ and $pr_x$ is the projection onto the first coordinate (i.e. $pr_x(\mathcal{M}) = [-1,1]$), then for $\tilde{x} = (1,0)^{\top}$ the (one-dimensional) vector $-1$ is a tangent vector in $pr_x(\tilde{x}) = 1$, but $v_1 = 0$ for all tangent vectors $v = (v_1,v_2)^\top \in T_{\tilde{x}} \mathcal{M}$, so $pr_x(v) = 0$ for all $v \in T_{\tilde{x}} \mathcal{M}$. However, for the manifolds that arise in our context, we will show that, under some conditions, $Tan(P_{\mathsf{int}},x) = pr_x(T_p \mathcal{M})$.
The following lemma will show the first half of this statement, which is that the projection of a tangent vector of $\mathcal{M}$ is always in the tangent cone of $P_{\mathsf{int}}$ (without requiring any additional regularity assumption).

\begin{lemma} \label{lem:prTanSpace_in_tanCone}
	$pr_x(T_{(x_0,\alpha_0)} \mathcal{M}) \subseteq Tan(P_{\mathsf{int}},x_0) \quad \forall (x_0,\alpha_0) \in \mathcal{M}$.
\end{lemma}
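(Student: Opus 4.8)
The plan is to realize every projected tangent vector as the velocity of a curve running inside $P_{\mathsf{int}}$, and then to turn that curve into an admissible sequence for Definition~\ref{def:tanCone}. Fix $p := (x_0,\alpha_0) \in \mathcal{M}$, take $v \in T_p \mathcal{M}$ and set $w := pr_x(v)$; the goal is to show $w \in Tan(P_{\mathsf{int}},x_0)$. I would first dispose of the trivial case $w = 0$, which lies in every tangent cone by definition, and then assume $w \neq 0$ for the remainder.

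The key geometric step is to exploit that $\mathcal{M}$ is a $C^2$-submanifold (Theorem~\ref{thm:Mmanifold}): via a local $C^2$-parametrization around $p$, every tangent vector at $p$ is the initial velocity of a curve on $\mathcal{M}$. So I would pick a $C^1$-curve $\gamma : (-\epsilon,\epsilon) \rightarrow \mathcal{M}$ with $\gamma(0) = p$ and $\dot{\gamma}(0) = v$, and then push it down via the linear projection, setting $c := pr_x \circ \gamma$. Since $pr_x(\mathcal{M}) = P_{\mathsf{int}}$, the curve $c$ lives in $P_{\mathsf{int}}$ and satisfies $c(0) = x_0$ and $\dot{c}(0) = pr_x(\dot{\gamma}(0)) = pr_x(v) = w$.

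It then remains to sample $c$ appropriately. I would set $v_i := c(1/i) - x_0$ for $i$ large enough that $1/i < \epsilon$; by construction $x_0 + v_i = c(1/i) \in P_{\mathsf{int}}$ and $v_i \rightarrow 0$ by continuity of $c$. A first-order expansion $c(1/i) = x_0 + \tfrac{1}{i} w + o(1/i)$ shows, using $w \neq 0$, that $v_i \neq 0$ for all sufficiently large $i$ and that $v_i / \|v_i\| \rightarrow w / \|w\|$. This is exactly what Definition~\ref{def:tanCone} requires of an admissible sequence, so $w \in Tan(P_{\mathsf{int}},x_0)$, proving the inclusion.

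The step I expect to be most delicate is the interplay between the case $w = 0$ and the normalization built into the tangent cone: the quotient $v_i / \|v_i\|$ is only meaningful once $v_i \neq 0$, and the Taylor argument only guarantees this when $w \neq 0$, so the degenerate direction genuinely has to be handled on its own. Everything else is routine — in particular, no invertibility or rank hypothesis beyond the premise of Theorem~\ref{thm:Mmanifold} enters, which matches the parenthetical claim preceding the lemma that no additional regularity is needed.
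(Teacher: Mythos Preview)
Your proposal is correct and follows essentially the same approach as the paper: pick a $C^1$-curve $\gamma$ on $\mathcal{M}$ with $\gamma(0)=p$ and $\dot\gamma(0)=v$, project to $P_{\mathsf{int}}$, and sample at $1/i$ to obtain the admissible sequence from Definition~\ref{def:tanCone}. Your Taylor-expansion argument for $v_i \neq 0$ and $v_i/\|v_i\| \to w/\|w\|$ is in fact slightly more explicit than the paper's ``w.l.o.g.\ $v_l \neq 0$'' followed by a direct algebraic rewriting of the difference quotient, but the two are the same computation in different dress.
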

\begin{proof}
	Let $p := (x_0,\alpha_0) \in \mathcal{M}$ and $v \in T_p \mathcal{M}$. If $pr_x(v) = 0$ we obviously have $pr_x(v) \in Tan(P_{\mathsf{int}},x_0)$. Let $pr_x(v) \neq 0$. By definition there exists a $C^1$-curve $\gamma: (-1,1) \rightarrow \mathcal{M}$ with $\gamma(0) = p$ and $\gamma'(0) = v$. Define $v_l := pr_x(\gamma(1/l)) - pr_x(p)$. W.l.o.g. assume that $v_l \neq 0$ $\forall l \in \mathbb{N}$. We have $v_l \rightarrow 0$ (since $\gamma$ is continuous) and $x_0 + v_l = pr_x(\gamma(1/l)) \in P_{\mathsf{int}}$. Finally, 
	\begin{align*}
		\frac{v_l}{\| v_l \|} &= \frac{pr_x(\gamma(1/l)) - pr_x(p)}{\| pr_x(\gamma(1/l)) - pr_x(p) \|} \\
		&= \frac{1/l}{\| pr_x(\gamma(1/l)) - pr_x(p) \|} \frac{pr_x(\gamma(1/l)) - pr_x(p)}{1/l} \\
		&=  \left| \left| pr_x \left( \frac{\gamma(1/l) - p}{1/l} \right) \right| \right|^{-1} pr_x \left( \frac{\gamma(1/l) - p}{1/l} \right) \rightarrow \frac{pr_x(v)}{\| pr_x(v) \|} \text{ as } l \rightarrow \infty,
	\end{align*}
	and hence, $pr_x(v) \in Tan(P_{\mathsf{int}},x_0)$.
\end{proof}

To show the opposite direction, i.e.~$pr_x(T_{(x_0,\alpha_0)} \mathcal{M}) \supseteq Tan(P_{\mathsf{int}},x_0)$, we first require an additional result about the uniqueness of KKT multipliers. 

\begin{lemma} \label{lem:alphaUnique}
	Let $x_0 \in P$. 
	\begin{enumerate}
		\item If $rk(Df(x_0)) = k - 1$ then $|A(x_0)| = 1$.
		\item If $x_0 \in P_{\mathsf{int}}$ and $|A(x_0)| = 1$ then $rk(Df(x_0)) = k - 1$.
		\item If $x_0 \in P_{\mathsf{int}}$ and $rk(Df(x_0)) < k-1$ then $|A(x_0)| > 1$ and $A(x_0) \cap \partial \Delta^{k-1} \neq \emptyset$. 
	\end{enumerate}
\end{lemma}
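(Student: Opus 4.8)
The plan is to reduce all three claims to elementary linear algebra and convex geometry, with a single structural observation doing the real work. That observation is the rank identity
\begin{equation*}
	rk(Df(x_0)) = rk(D_\alpha \tilde{F}(x_0)) \qquad \text{for all } x_0 \in P.
\end{equation*}
Indeed, $rk(Df(x_0))$ equals $\dim \operatorname{span}\{\nabla f_1(x_0),\ldots,\nabla f_k(x_0)\}$, while from the formula for $D_\alpha \tilde{F}$ the quantity $rk(D_\alpha \tilde{F}(x_0))$ equals $\dim \operatorname{span}\{\nabla f_i(x_0) - \nabla f_k(x_0) : i = 1,\ldots,k-1\}$, i.e.\ the dimension of the affine hull of the gradients. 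Since $x_0 \in P$, there is a KKT multiplier, so $0$ lies in the convex (hence affine) hull of $\{\nabla f_1(x_0),\ldots,\nabla f_k(x_0)\}$; I would spell out in one line that this forces each $\nabla f_i(x_0)$ to lie in $\operatorname{span}\{\nabla f_j(x_0) - \nabla f_k(x_0)\}$, so the linear span and the span of the differences coincide, giving the identity. I expect this step to be the main obstacle, as it is the only conceptual (rather than mechanical) point: one must recognize that Pareto criticality ($0$ in the convex hull) is exactly what collapses the distinction between the spanning dimension of the gradients and of their differences.

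Having fixed $x_0 \in P$, I would reinterpret $A(x_0)$ as the solution set of the linear (in $\alpha$) system $D_\alpha \tilde{F}(x_0)\,\alpha = -\nabla f_k(x_0)$ intersected with $\overline{\Delta^{k-1}}$; this intersection is nonempty precisely because $x_0 \in P$. For claim (1), the hypothesis $rk(Df(x_0)) = k-1$ combined with the rank identity gives $rk(D_\alpha \tilde{F}(x_0)) = k-1$, so $D_\alpha \tilde{F}(x_0) \in \mathbb{R}^{n \times (k-1)}$ has full column rank and the linear system has at most one solution; existence then yields exactly one, which lies in $\overline{\Delta^{k-1}}$, hence $|A(x_0)| = 1$.

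For claim (3), suppose $x_0 \in P_{\mathsf{int}}$ and $rk(Df(x_0)) < k-1$. Then $rk(D_\alpha \tilde{F}(x_0)) < k-1$, so $\ker D_\alpha \tilde{F}(x_0)$ is nontrivial and the affine solution set has dimension at least one. Because $x_0 \in P_{\mathsf{int}}$, the full multiplier can be chosen strictly positive, which in reduced coordinates gives a solution $\alpha^* \in \Delta^{k-1}$ lying in the \emph{open} simplex. Picking $d \in \ker D_\alpha \tilde{F}(x_0) \setminus \{0\}$, the whole line $\alpha^* + t d$ solves the linear system; since $\alpha^*$ is interior we may perturb slightly and stay in $\Delta^{k-1}$, producing a second element of $A(x_0)$, so $|A(x_0)| > 1$. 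For the boundary assertion I would follow the ray $t \mapsto \alpha^* + t d$ for $t \geq 0$: it remains in the solution set, and since $\overline{\Delta^{k-1}}$ is compact while $\alpha^*$ is interior, the supremum $t^* := \sup\{ t \geq 0 : \alpha^* + t d \in \overline{\Delta^{k-1}} \}$ is finite and positive. By closedness $\alpha^* + t^* d \in \overline{\Delta^{k-1}}$, and maximality forces it onto $\partial \Delta^{k-1}$ (an interior point could be pushed further); this point lies in $A(x_0) \cap \partial \Delta^{k-1}$, which is therefore nonempty.

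Finally, claim (2) needs no separate computation: since $rk(D_\alpha \tilde{F}(x_0)) \leq k-1$ always, the multiplicity statement just established in (3) shows that for $x_0 \in P_{\mathsf{int}}$ one has $rk(Df(x_0)) < k-1 \Rightarrow |A(x_0)| > 1$, so by contraposition $|A(x_0)| = 1$ forces $rk(Df(x_0)) = k-1$. I would present claim (2) in exactly this way, as the contrapositive of the multiplicity part of (3) together with the upper bound on the rank.
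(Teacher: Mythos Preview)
Your argument is correct. The organising principle is different from the paper's, though the underlying linear algebra is the same. The paper works throughout with the full multipliers in $\mathbb{R}^k$ and the kernel of $Df(x_0)^\top$: for (1) it argues by contradiction that two distinct multipliers, each summing to $1$, are linearly independent in $\ker Df(x_0)^\top$; for (2) it picks a second kernel vector $w$ and splits into the cases $\sum_i w_i \neq 0$ and $\sum_i w_i = 0$ to build a line of multipliers; (3) then reuses the line from (2) and invokes boundedness of $\Delta^{k-1}$. Your route instead fixes everything in reduced coordinates via the rank identity $rk(Df(x_0)) = rk(D_\alpha \tilde{F}(x_0))$ for $x_0 \in P$, which is exactly the observation that $0$ lying in the affine hull of the gradients forces $\nabla f_k(x_0) \in \operatorname{span}\{\nabla f_i(x_0) - \nabla f_k(x_0)\}$. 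This buys you two things: (1) becomes a one-line injectivity statement with no contradiction needed, and the paper's case split in (2) disappears entirely, since the kernel of $D_\alpha \tilde{F}(x_0)$ already lives in the right $(k-1)$-dimensional space. Ordering the proof as (1), (3), then (2) by contraposition is also tidier than the paper's sequence, where (3) has to reach back into the proof of (2). Both arguments are short; yours is marginally more conceptual and avoids a case analysis, at the price of isolating and justifying the rank identity up front.
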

\begin{proof}

	1. Since $x_0 \in P$ we have $|A(x_0)| > 0$. Assume $|A(x_0)| > 1$ and let $\alpha^1, \alpha^2 \in A(x_0)$ with $\alpha^1 \neq \alpha^2$. Let $\tilde{\alpha}^1 := (\alpha^1, 1 - \sum_{i = 1}^{k-1} \alpha^1_i)^{\top} \in \R^k$ and $\tilde{\alpha}^2 := (\alpha^2, 1 - \sum_{i = 1}^{k-1} \alpha^2_i)^{\top} \in \R^k$. By definition we have $\tilde{\alpha}^1, \tilde{\alpha}^2 \in ker(Df(x_0)^\top)$ and $\sum_{i = 1}^k \tilde{\alpha}^1_i = \sum_{i = 1}^k \tilde{\alpha}^2_i = 1$. If there would be some $s \in \R$ with $\tilde{\alpha}^1 = s \tilde{\alpha}^2$ then this would result in
	\begin{equation*}
		1 = (1,...,1)^\top \tilde{\alpha}^1 = s (1,...,1)^\top \tilde{\alpha}^2 = s,
	\end{equation*}
	so $\tilde{\alpha}^1$ and $\tilde{\alpha}^2$ have to be linear independent. Thus $dim(ker(Df(x_0)^\top)) > 1$ which by Rank-nullity theorem means $rk(Df(x_0)) = rk(Df(x_0)^\top) = k - dim(ker(Df(x_0)^\top)) < k-1$.	
	
	2. Assume $rk(Df(x_0)) < k - 1$. Let $\alpha \in A(x_0)$. Then we must have some $w \in \mathbb{R}^k$ with $Df(x_0)^{\top} w = 0$ such that $(\alpha, 1 - \sum_{i = 1}^{k-1} \alpha_i)^{\top} \in \R^k$ and $w$ are linear independent. \\
	\emph{Case 1}: $\sum_{i = 1}^k w_i \neq 0$. Assume w.l.o.g.~$\sum_{i = 1}^k w_i = 1$, from which follows $\tilde{F}(x_0,(w_1,...,w_{k-1})^{\top}) = 0$. For $\lambda \in \mathbb{R}$ define $\beta(\lambda) := \alpha + \lambda ((w_1,...,w_{k-1})^{\top} - \alpha)$. Then we have $\tilde{F}(x_0,\beta(\lambda)) = 0$ for all $\lambda \in \mathbb{R}$. Since $\alpha \in \Delta^{k-1}$ and $\Delta^{k-1}$ is open there has to be some $\lambda \neq 0$ such that $\beta(\lambda) \in \Delta^{k-1}$, i.e.~$\alpha$ is not unique. \\
	\emph{Case 2}: $\sum_{i = 1}^k w_i = 0$. This means 
	\begin{align*}
	\sum_{i = 1}^k w_i \nabla f_i(x_0) = 0 &\Leftrightarrow \sum_{i = 1}^{k-1} w_i \nabla f_i(x_0) + (\sum_{i = 1}^{k-1} -w_i) \nabla f_k(x_0) = 0 \\
	& \Leftrightarrow \sum_{i = 1}^{k-1} w_i (\nabla f_i(x_0) - \nabla f_k(x_0)) = 0.
	\end{align*}
	Define $\beta(\lambda) := \alpha + \lambda (w_1,...,w_{k-1})^{\top}$. Then $\tilde{F}(x_0,\beta(\lambda)) = 0$ for all $\lambda \in \mathbb{R}$. The contradiction follows as in Case 1. 

	3. Let $\alpha \in A(x_0) \cap \Delta^{k-1}$. From the proof of 2.~we know that there is some $\gamma \in \mathbb{R}^{k-1}$ with $\tilde{F}(x_0,\alpha + \lambda \gamma) = 0$ for all $\lambda \in \mathbb{R}$. Since $\alpha \in \Delta^{k-1}$ and $\Delta^{k-1}$ is bounded, there has to be some $\lambda \in \mathbb{R}$ such that $\alpha + \lambda \gamma \in \partial \Delta^{k-1}$. In particular $|A(x_0)| > 1$.
\end{proof}
Lemma~\ref{lem:alphaUnique} has the following obvious implications:
\begin{corollary} \label{cor:rkTooSmall_rkFull}
	\begin{enumerate}
	\item Let $x_0 \in P$. If $rk(Df(x_0)) < k-1$ then $A(x_0) \cap \partial \Delta^{k-1} \neq \emptyset$.
	\item Let $x_0 \in P_{\mathsf{int}}$. Then $rk(Df(x_0)) = k-1$ $\Leftrightarrow$ $|A(x_0)| = 1$.
	\end{enumerate}
\end{corollary}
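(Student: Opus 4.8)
The plan is to derive both parts directly from Lemma~\ref{lem:alphaUnique}, handling Part 1 and Part 2 separately. The only point requiring a little care is Part 1, because part 3 of Lemma~\ref{lem:alphaUnique} only covers $x_0 \in P_{\mathsf{int}}$, whereas the corollary allows an arbitrary $x_0 \in P$. The key preliminary observation I would state explicitly is the correspondence between the definition of $P_0$ and the boundary $\partial \Delta^{k-1}$: a reduced multiplier $\alpha \in \overline{\Delta^{k-1}}$ lies in $\partial \Delta^{k-1}$ precisely when the associated full multiplier $(\alpha_1, \dots, \alpha_{k-1}, 1 - \sum_{i=1}^{k-1} \alpha_i) \in (\mathbb{R}^{\geq 0})^k$ has a vanishing component. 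Consequently, $x_0 \in P_0$ is equivalent to $A(x_0) \subseteq \partial \Delta^{k-1}$.

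For Part 1, I would split according to whether $x_0 \in P_{\mathsf{int}}$ or $x_0 \in P_0$. If $x_0 \in P_{\mathsf{int}}$, then the hypothesis $rk(Df(x_0)) < k-1$ together with part 3 of Lemma~\ref{lem:alphaUnique} immediately yields $A(x_0) \cap \partial \Delta^{k-1} \neq \emptyset$. If instead $x_0 \in P_0$, then by the correspondence above $A(x_0) \subseteq \partial \Delta^{k-1}$, and since $x_0 \in P$ gives $A(x_0) \neq \emptyset$, we obtain $A(x_0) \cap \partial \Delta^{k-1} = A(x_0) \neq \emptyset$; note that in this branch the rank hypothesis is not even needed. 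Since $P = P_{\mathsf{int}} \cup P_0$ is a disjoint union, these two cases are exhaustive.

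For Part 2, I would simply combine the first two parts of Lemma~\ref{lem:alphaUnique}. The forward implication $rk(Df(x_0)) = k-1 \Rightarrow |A(x_0)| = 1$ is exactly part 1 of the lemma (which holds for any $x_0 \in P$, hence in particular for $x_0 \in P_{\mathsf{int}}$). The reverse implication $|A(x_0)| = 1 \Rightarrow rk(Df(x_0)) = k-1$ is exactly part 2 of the lemma, which already assumes $x_0 \in P_{\mathsf{int}}$. Together these give the claimed equivalence.

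The main (and essentially only) obstacle is the $P_0$ branch of Part 1, where one must recognise that the defining property of $P_0$ already forces every multiplier onto $\partial \Delta^{k-1}$, so that the rank condition plays no role there. Everything else is a direct citation of Lemma~\ref{lem:alphaUnique}, and no new computation is required, which is why the corollary can be regarded as an immediate consequence.
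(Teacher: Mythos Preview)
Your proposal is correct and matches the paper's approach: the paper does not give an explicit proof but simply states that the corollary follows obviously from Lemma~\ref{lem:alphaUnique}, and your argument is precisely the natural unpacking of that claim. The case split in Part~1 into $P_{\mathsf{int}}$ versus $P_0$, together with the observation that $x_0 \in P_0$ forces $A(x_0) \subseteq \partial \Delta^{k-1}$, is exactly the small step one needs to extend part~3 of the lemma from $P_{\mathsf{int}}$ to all of $P$.
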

We can utilize the results about the uniqueness of the KKT multipliers to show that if the rank of $Df$ is large enough, the opposite direction of Lemma~\ref{lem:prTanSpace_in_tanCone} holds as well. For this we require $f$ to be three times continuously differentiable (by which $\tilde{F}$ is $C^2$) which we will assume from now on.
\begin{lemma} \label{lem:tanCone_in_prTanSpace}
	Let $x_0 \in P_{\mathsf{int}}$ with $rk(Df(x_0)) = k-1$. Then there exists $\alpha_0 \in A(x_0) \cap \Delta^{k-1}$ with 
	\begin{equation*}
		Tan(P_{\mathsf{int}},x_0) \subseteq pr_x(T_{(x_0,\alpha_0)} \mathcal{M}).
	\end{equation*}
\end{lemma}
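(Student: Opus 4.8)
The plan is to work in a local chart for $\mathcal{M}$ provided by the Implicit Function Theorem and to reduce the inclusion to a first-order (tangent-space) statement. First, since $x_0 \in P_{\mathsf{int}}$ and $rk(Df(x_0)) = k-1$, Corollary~\ref{cor:rkTooSmall_rkFull} gives $|A(x_0)| = 1$; let $\alpha_0$ be the unique element, which lies in $\Delta^{k-1}$ because $x_0 \in P_{\mathsf{int}}$, and set $p := (x_0,\alpha_0) \in \mathcal{M}$. By assumption~\eqref{eq:assum_DxF_reg} the matrix $D_x\tilde{F}(x_0,\alpha_0)$ is invertible, so the Implicit Function Theorem yields neighborhoods $U$ of $\alpha_0$ (inside $\Delta^{k-1}$) and $V$ of $x_0$ together with a $C^1$-map $\phi: U \to V$ such that $\mathcal{M} \cap (V \times U)$ is exactly the graph of $\phi$ and $\phi(\alpha_0) = x_0$. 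Differentiating $\tilde{F}(\phi(\alpha),\alpha) = 0$ gives $D\phi(\alpha_0) = -[D_x\tilde{F}(x_0,\alpha_0)]^{-1} D_\alpha\tilde{F}(x_0,\alpha_0)$, and a short computation with $T_p\mathcal{M} = ker(D\tilde{F}(x_0,\alpha_0))$ (Theorem~\ref{thm:Mmanifold}) shows $pr_x(T_p\mathcal{M}) = \mathrm{Im}(D\phi(\alpha_0))$. Thus it suffices to prove $Tan(P_{\mathsf{int}},x_0) \subseteq \mathrm{Im}(D\phi(\alpha_0))$.

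Next I would unwind the definition of the tangent cone. Fix $v \in Tan(P_{\mathsf{int}},x_0) \setminus \{0\}$ with a realizing sequence $(v_i)_i$, i.e.~$v_i \to 0$, $x_0 + v_i \in P_{\mathsf{int}}$ and $v_i / \|v_i\| \to v/\|v\|$. For each $i$ choose $\alpha_i \in A(x_0 + v_i) \cap \Delta^{k-1}$ (possible since $x_0 + v_i \in P_{\mathsf{int}}$). The key topological step is to show $\alpha_i \to \alpha_0$: since $\overline{\Delta^{k-1}}$ is compact, every subsequence of $(\alpha_i)_i$ has a convergent sub-subsequence with limit $\bar\alpha \in \overline{\Delta^{k-1}}$, and continuity of $\tilde{F}$ together with $x_0 + v_i \to x_0$ forces $\tilde{F}(x_0,\bar\alpha) = 0$, hence $\bar\alpha \in A(x_0) = \{\alpha_0\}$. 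As every cluster point equals $\alpha_0$, the whole sequence converges, $\alpha_i \to \alpha_0$. Consequently, for large $i$ we have $\alpha_i \in U$ and $x_0 + v_i \in V$, so $(x_0+v_i,\alpha_i)$ lies on the graph and $x_0 + v_i = \phi(\alpha_i)$.

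With the points now sitting in the chart, the argument becomes first-order. Writing $\xi_i := (\alpha_i - \alpha_0)/\|\alpha_i-\alpha_0\|$ (note $\alpha_i \neq \alpha_0$, else $v_i = 0$) and passing to a subsequence with $\xi_i \to \xi$, $\|\xi\| = 1$, the $C^1$-expansion $v_i = \phi(\alpha_i) - \phi(\alpha_0) = D\phi(\alpha_0)(\alpha_i-\alpha_0) + o(\|\alpha_i-\alpha_0\|)$ yields $v_i/\|\alpha_i-\alpha_0\| \to D\phi(\alpha_0)\xi$. The decisive point is that $D\phi(\alpha_0)$ is \emph{injective}: this is where $rk(Df(x_0)) = k-1$ is used, since $D\phi(\alpha_0)$ and $D_\alpha\tilde{F}(x_0,\alpha_0) = (\nabla f_1 - \nabla f_k, \ldots, \nabla f_{k-1} - \nabla f_k)$ have the same rank, and a dependence $\sum_{i=1}^{k-1} c_i(\nabla f_i - \nabla f_k) = 0$ would produce a linear relation among the gradients with coefficients summing to zero, which together with the (strictly positive, hence nontrivial) KKT relation at $x_0$ would force $rk(Df(x_0)) \leq k-2$, a contradiction (exactly as in the proof of Lemma~\ref{lem:alphaUnique}). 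Therefore $D\phi(\alpha_0)\xi \neq 0$, and normalizing gives $v_i/\|v_i\| \to D\phi(\alpha_0)\xi / \|D\phi(\alpha_0)\xi\|$; comparing with $v_i/\|v_i\| \to v/\|v\|$ shows that $v$ is a positive multiple of $D\phi(\alpha_0)\xi \in \mathrm{Im}(D\phi(\alpha_0)) = pr_x(T_p\mathcal{M})$, as desired.

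I expect the main obstacle to be the convergence $\alpha_i \to \alpha_0$: a priori the multipliers of the nearby critical points $x_0 + v_i$ need not stay close to $\alpha_0$, and it is precisely the uniqueness $|A(x_0)| = 1$ (Corollary~\ref{cor:rkTooSmall_rkFull}) that confines them to the chart so that the graph representation becomes available. The injectivity of $D\phi(\alpha_0)$ is the second, more computational, place where the rank hypothesis enters, and is what rules out a spurious ``vertical'' tangent direction $\xi$ that would otherwise project to $0$ and break the normalization.
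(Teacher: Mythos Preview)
Your argument is correct and follows essentially the same route as the paper: obtain the local chart $\phi$ from the Implicit Function Theorem, force the multipliers $\alpha_i$ to converge to the unique $\alpha_0$ via compactness and $|A(x_0)|=1$, and use the rank hypothesis $rk(Df(x_0))=k-1$ to control the first-order expansion of $\phi$. The only cosmetic difference is that you normalize by $\|\alpha_i-\alpha_0\|$ and invoke injectivity of $D\phi(\alpha_0)$ directly, whereas the paper normalizes by $\|v_i\|$ and shows by contradiction that $\|\alpha_i-\alpha_0\|/\|v_i\|$ stays bounded; these are dual formulations of the same rank obstruction (and your version in fact only needs $\phi\in C^1$, i.e.\ $f\in C^2$, rather than the paper's Taylor remainder which uses $f\in C^3$).
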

\begin{proof}
	Let $v \in Tan(P_{\mathsf{int}},x_0)$ and $(v_i)_i$ be a sequence as in Definition \ref{def:tanCone}. Since $(x_0 + v_i)_i \subseteq P_{\mathsf{int}}$, this induces a sequence $(\alpha_i)_i \in \Delta^{k-1}$ with $\tilde{F}(x_0 + v_i, \alpha_i) = 0$ $\forall i \in \mathbb{N}$. Since $\Delta^{k-1}$ is bounded, we can w.l.o.g.~assume that $\alpha_i \rightarrow \alpha_0 \in \overline{\Delta^{k-1}}$ and by continuity of $\tilde{F}$ we have $\tilde{F}(x_0,\alpha_0) = 0$. By Corollary~\ref{cor:rkTooSmall_rkFull} we have $\alpha_0 \in \Delta^{k-1}$. \\
	Since $D_x \tilde{F}(x_0,\alpha_0)$ is invertible and $\tilde{F}$ is $C^2$, we can apply the Implicit Function Theorem at $(x_0,\alpha_0)$ to obtain neighborhoods $U \subseteq \Delta^{k-1}$ of $\alpha_0$, $V \subseteq \mathbb{R}^n$ of $x_0$ and a (unique) $C^2$ function $\phi : U \rightarrow V$ such that $\tilde{F}(x,\alpha) = 0 \Leftrightarrow \phi(\alpha) = x$ and $D\phi(\alpha) = -(D_x \tilde{F}(x,\alpha))^{-1} D_\alpha \tilde{F}(x,\alpha)$ $\forall (x,\alpha) \in V \times U$. Since $\alpha_i \rightarrow \alpha_0$ and $v_i \rightarrow 0$ we can assume w.l.o.g.~that $\alpha_i \in U$ and $x_0 + v_i \in V$ $\forall i \in \mathbb{N}$. Thus $\phi(\alpha_i) = x_0 + v_i$ and we get
	\begin{align*}
		\frac{v}{\| v \|} &= \lim_{i \rightarrow \infty} \frac{v_i}{\| v_i \|} = \lim_{i \rightarrow \infty} \frac{\phi(\alpha_i) - \phi(\alpha_0)}{\| v_i \|} = \lim_{i \rightarrow \infty} \frac{\phi(\alpha_0 + \| v_i \| \frac{\alpha_i - \alpha_0}{\| v_i \|}) - \phi(\alpha_0)}{\| v_i \|} \\
		&= \lim_{i \rightarrow \infty} \left( D\phi(\alpha_0) \frac{\alpha_i - \alpha_0}{\| v_i \|} + \frac{\mathcal{O}(\| \alpha_i - \alpha_0 \|^2)}{\| v_i \|} \right) \qquad \text{(Taylor)} \\
		&= \lim_{i \rightarrow \infty} \frac{\| \alpha_i - \alpha_0 \|}{\| v_i \|} \left( D\phi(\alpha_0) \frac{\alpha_i - \alpha_0}{\| \alpha_i - \alpha_0 \|} + \frac{\mathcal{O}(\| \alpha_i - \alpha_0 \|^2)}{\| \alpha_i - \alpha_0 \|^2} \| \alpha_i - \alpha_0 \| \right).
	\end{align*}
	Assume that $\left( \frac{\| \alpha_i - \alpha_0 \|}{\| v_i \|} \right)_i$ is unbounded. Then by the above equation we have $D\phi(\alpha_0) \frac{\alpha_i - \alpha_0}{\| \alpha_i - \alpha_0 \|} \rightarrow 0$. W.l.o.g. assume that $\frac{\alpha_i - \alpha_0}{\| \alpha_i - \alpha_0 \|} \rightarrow w$, which results in $D \phi(\alpha_0) w = 0$, thus $D_\alpha \tilde{F}(x_0,\alpha_0) w = 0$ with $\| w \| = 1$. This means
	\begin{align*}
		&(\nabla f_1(x_0) - \nabla f_k(x_0)) w_1 + \cdots + (\nabla f_{k-1}(x_0) - \nabla f_k(x_0)) w_{k-1} = 0 \\
		\Leftrightarrow \quad & w_1 \nabla f_1(x_0) + \cdots + w_{k-1} \nabla f_{k-1}(x_0) + \left( \sum_{i = 1}^{k-1} (-w_i) \right) \nabla f_k(x_0) = 0 \\
		\Leftrightarrow \quad & Df(x_0)^\top \left( w_1,...,w_{k-1},\sum_{i = 1}^{k-1} (-w_i) \right)^\top = 0 \\
		\Leftrightarrow \quad & \tilde{w} \in ker(Df(x_0)^\top)
	\end{align*}
	for $\tilde{w} := (w,\sum_{i = 1}^{k-1} (-w_i))^\top \in \R^k$. Let $\tilde{\alpha} := (\alpha, 1 - \sum_{i = 1}^{k-1} \alpha_i)^\top \in \R^k$. Then $\tilde{\alpha}$ and $\tilde{w}$ are linear independent since $\sum_{i = 1}^k \tilde{w}_i = 0$ and $\sum_{i = 1}^k \tilde{\alpha}_i = 1$. As they are both in $ker(Df(x_0)^\top)$ we must have $rk(Df(x_0)) < k-1$, which is a contradiction.
	
	So $\left( \frac{\| \alpha_i - \alpha_0 \|}{\| v_i \|} \right)_i$ has to be bounded and we can assume w.l.o.g.~that $\frac{\alpha_i - \alpha_0}{\| v_i \|} \rightarrow v^\alpha$. Thus
	\begin{align*}
		\frac{v}{\| v \|} &= \lim_{i \rightarrow \infty} \frac{\phi(\alpha_0 + \| v_i \| \frac{\alpha_i - \alpha_0}{\| v_i \|}) - \phi(\alpha_0)}{\| v_i \|} = D \phi(\alpha_0) v^\alpha \\
		&= -(D_x \tilde{F}(x_0,\alpha_0))^{-1} D_\alpha \tilde{F}(x_0,\alpha_0) v^\alpha.
	\end{align*}
	From this we obtain
	\begin{equation*}
		D \tilde{F}(x_0,\alpha_0) z = 0
	\end{equation*}
	with $z := \left(\frac{v}{\| v \|},v^\alpha \right)^\top \in \R^{n+k-1}$. By Theorem \ref{thm:Mmanifold} we have $T_{(x_0,\alpha_0)} \mathcal{M} = ker(D \tilde{F}(x_0,\alpha_0))$ which completes the proof.
\end{proof}

\subsection{The edge of the Pareto critical set}
We are now in the position to extend the results in \cite{P2017} to a more general situation and show that if $x$ lies on the ``edge'' of the Pareto critical set, then there has to be a corresponding KKT multiplier $\alpha$ with a zero component somewhere. The topological boundary $\partial P$ is in general not suitable for what we want to describe with the term ``edge'', see e.g. Example \ref{ex:isolTan} (where $\partial P = P$). Instead we define it in the following way:
\begin{definition} \label{def:P_E}
	We call 
	\begin{equation*}
	P_E := \{ x \in P : Tan(P_{\mathsf{int}},x) \neq -Tan(P_{\mathsf{int}},x) \}
	\end{equation*}
	the \emph{edge} of the Pareto critical set. In other words, $x_0 \in P_E$ iff there exists $v \in Tan(P_{\mathsf{int}},x_0)$ so that $-v \notin Tan(P_{\mathsf{int}},x_0)$.
\end{definition}


Building on Lemma~\ref{lem:prTanSpace_in_tanCone} and Lemma~\ref{lem:tanCone_in_prTanSpace}, we can now proof the main result of this section, namely that points on the edge of the Pareto critical set also satisfy the optimality conditions for a subproblem with at least one neglected objective.

\begin{theorem} \label{thm:main1}
	If $x_0 \in P_E$ then $A(x_0) \cap \partial \Delta^{k-1} \neq \emptyset$. 
\end{theorem}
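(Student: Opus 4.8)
The plan is to argue by contradiction (equivalently, by contraposition): suppose $x_0 \in P_E$ but $A(x_0) \cap \partial \Delta^{k-1} = \emptyset$. I would then show that the tangent cone at $x_0$ is forced to coincide with a genuine linear subspace, which is automatically symmetric under negation, contradicting $x_0 \in P_E$.

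First I would extract the consequences of the two standing assumptions. Since $x_0 \in P_E \subseteq P$, we have $A(x_0) \neq \emptyset$, and $A(x_0) \cap \partial \Delta^{k-1} = \emptyset$ forces every element of $A(x_0)$ into the open simplex $\Delta^{k-1}$; in particular $x_0 \in P_{\mathsf{int}}$. Next I would pin down the Jacobian rank. On the one hand, Pareto criticality of $x_0$ provides a nontrivial vanishing convex combination of the rows of $Df(x_0)$, so $rk(Df(x_0)) \leq k-1$. On the other hand, the contrapositive of Corollary~\ref{cor:rkTooSmall_rkFull}(1) gives $rk(Df(x_0)) \geq k-1$. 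Together these yield $rk(Df(x_0)) = k-1$.

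With $x_0 \in P_{\mathsf{int}}$ and $rk(Df(x_0)) = k-1$ established, I would invoke the two sandwiching lemmas. Lemma~\ref{lem:tanCone_in_prTanSpace} supplies some $\alpha_0 \in A(x_0) \cap \Delta^{k-1}$ with $Tan(P_{\mathsf{int}},x_0) \subseteq pr_x(T_{(x_0,\alpha_0)}\mathcal{M})$, and Lemma~\ref{lem:prTanSpace_in_tanCone} gives the reverse inclusion. Hence
\[
	Tan(P_{\mathsf{int}},x_0) = pr_x(T_{(x_0,\alpha_0)}\mathcal{M}).
\]
Because $T_{(x_0,\alpha_0)}\mathcal{M}$ is a linear subspace of $\R^{n+k-1}$ (Theorem~\ref{thm:Mmanifold}) and $pr_x$ is linear, the right-hand side is a linear subspace of $\R^n$ and therefore equals its own negation. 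Thus $Tan(P_{\mathsf{int}},x_0) = -Tan(P_{\mathsf{int}},x_0)$, which by Definition~\ref{def:P_E} means $x_0 \notin P_E$ -- the sought contradiction.

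The main obstacle, and the only genuinely non-mechanical step, is the rank bookkeeping in the second paragraph: one must verify that $A(x_0) \cap \partial \Delta^{k-1} = \emptyset$ simultaneously places $x_0$ in $P_{\mathsf{int}}$ and forces the full rank $k-1$, since these are precisely the two hypotheses that Lemma~\ref{lem:tanCone_in_prTanSpace} demands. Once this link is secured, the symmetry of the resulting linear subspace does all the remaining work, and no regularity beyond the standing assumption~\eqref{eq:assum_DxF_reg} is required.
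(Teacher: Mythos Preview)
Your proposal is correct and follows essentially the same route as the paper's proof: argue by contradiction, deduce $x_0 \in P_{\mathsf{int}}$ and $rk(Df(x_0)) = k-1$ from $A(x_0) \cap \partial \Delta^{k-1} = \emptyset$, then combine Lemmas~\ref{lem:prTanSpace_in_tanCone} and~\ref{lem:tanCone_in_prTanSpace} to identify $Tan(P_{\mathsf{int}},x_0)$ with the linear space $pr_x(T_{(x_0,\alpha_0)}\mathcal{M})$, contradicting membership in $P_E$. The only cosmetic difference is that the paper cites Lemma~\ref{lem:alphaUnique} directly for the rank step, whereas you obtain it via Corollary~\ref{cor:rkTooSmall_rkFull}(1) together with the trivial upper bound $rk(Df(x_0)) \leq k-1$; these are equivalent.
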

\begin{proof}
	Assume the assertion does not hold, so $x_0 \in P_E$ and $\nexists \alpha \in \overline{\Delta^{k-1}} \setminus \Delta^{k-1}$ with $\tilde{F}(x_0,\alpha) = 0$. In particular, we get $x_0 \in P_{\mathsf{int}}$. Then -- according to Lemma~\ref{lem:alphaUnique} -- $\alpha$ has to be unique and contained in $\Delta^{k-1}$ and $rk(Df(x_0)) = k-1$. We can thus apply Lemma~\ref{lem:prTanSpace_in_tanCone} and Lemma~\ref{lem:tanCone_in_prTanSpace} to see that $Tan(P_{\mathsf{int}},x_0) = pr_x(T_{(x_0,\alpha_0)} \mathcal{M})$. Since $pr_x(T_{(x_0,\alpha_0)} \mathcal{M})$ is a vector space, we obviously have $Tan(P_{\mathsf{int}},x_0) = -Tan(P_{\mathsf{int}},x_0)$ which contradicts our assumption. 
\end{proof}

If we additionally assume that the rank of $Df$ is large enough we can use Lemma \ref{lem:alphaUnique} to get the following corollary:

\begin{corollary} \label{cor:main1cor}
	Let $rk(Df(x)) = k-1$ for all $x \in P_{\mathsf{int}}$. Then $P_E \subseteq P_0$.  
\end{corollary}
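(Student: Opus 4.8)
The plan is to argue by contradiction, combining the main result Theorem~\ref{thm:main1} with the uniqueness characterization in Corollary~\ref{cor:rkTooSmall_rkFull}. Since $P_{\mathsf{int}} := P \setminus P_0$, the two sets partition $P$, so establishing $P_E \subseteq P_0$ is the same as showing that no point of the edge lies in $P_{\mathsf{int}}$.

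First I would fix an arbitrary $x_0 \in P_E$ and invoke Theorem~\ref{thm:main1} to get $A(x_0) \cap \partial \Delta^{k-1} \neq \emptyset$; that is, $x_0$ possesses a (reduced) KKT multiplier on the boundary of the simplex, equivalently a full multiplier with at least one vanishing component. Then I would suppose toward a contradiction that $x_0 \in P_{\mathsf{int}}$. Under the standing hypothesis this gives $rk(Df(x_0)) = k-1$, whence Corollary~\ref{cor:rkTooSmall_rkFull}(2) forces $|A(x_0)| = 1$. But $x_0 \in P_{\mathsf{int}}$ means, by definition, that $x_0$ admits a strictly positive KKT multiplier, so $A(x_0) \cap \Delta^{k-1} \neq \emptyset$. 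As $A(x_0)$ is a singleton, its sole element must then lie in the open simplex $\Delta^{k-1}$, so $A(x_0) \cap \partial \Delta^{k-1} = \emptyset$, contradicting the first step. Therefore $x_0 \in P_0$, and since $x_0 \in P_E$ was arbitrary, $P_E \subseteq P_0$.

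I do not anticipate a real obstacle: all the analytic work is already carried out in Theorem~\ref{thm:main1} and Corollary~\ref{cor:rkTooSmall_rkFull}, and what remains is the logical incompatibility of ``some multiplier is strictly positive'' with ``some multiplier has a zero component'' once uniqueness is known. The only point demanding care is the translation between the reduced multiplier set $A(x_0) \subseteq \overline{\Delta^{k-1}}$ and the associated full multiplier $(\alpha, 1 - \sum_{i=1}^{k-1} \alpha_i)$: one should verify that a strictly positive full multiplier corresponds precisely to membership in the open simplex $\Delta^{k-1}$, and that $\partial \Delta^{k-1}$ corresponds precisely to a vanishing component of the full multiplier (the case $\sum_{i=1}^{k-1} \alpha_i = 1$ accounting for $\alpha_k = 0$). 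With that dictionary fixed, the contradiction is immediate.
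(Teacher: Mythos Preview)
Your argument is correct and is essentially the paper's intended proof: apply Theorem~\ref{thm:main1} to get a multiplier on $\partial\Delta^{k-1}$, then use the rank hypothesis together with Lemma~\ref{lem:alphaUnique} (equivalently Corollary~\ref{cor:rkTooSmall_rkFull}) to force uniqueness of the multiplier, which is incompatible with $x_0\in P_{\mathsf{int}}$. The dictionary you spell out between full and reduced multipliers is exactly the right check, and there is nothing further to add.
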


Theorem~\ref{thm:main1} and Corollary~\ref{cor:main1cor} show that some objective functions may be discarded if we are only interested in calculating the edge of the Pareto critical set. This will be used in Section~\ref{sec:CalculationParetoSetViaSubproblems}.

Although $\partial P$ and $P_0$ do not coincide in general, there is a special case where $\partial P = P_0$, as shown in the following lemma. (In this case, $P_E$ is not required to describe the relationship between $P_0$ and $P$).

\begin{lemma} \label{lem:PEP0}
	Let $rk(Df(x)) = k-1 = n$ for all $x \in P$. Then $\partial P = P_0$ (with $\partial P$ defined as in Definition \ref{def:topology}).
\end{lemma}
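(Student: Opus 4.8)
The plan is to prove the two inclusions $\partial P \subseteq P_0$ and $P_0 \subseteq \partial P$ separately, after two reductions. First I would record that under $rk(Df(x)) = k-1$ for every $x\in P$, part~1 of Lemma~\ref{lem:alphaUnique} makes the multiplier unique, so $A(x) = \{\alpha(x)\}$ is a singleton on all of $P$; and that $P$ is closed, since any sequence $x_i\in P$ with $x_i\to\bar x$ has multipliers in the compact set $\overline{\Delta^{k-1}}$, a convergent subsequence of which produces $\bar\alpha$ with $\tilde F(\bar x,\bar\alpha)=0$ by continuity, so $\bar x\in P$. Hence $\overline P = P$ and $\partial P = P\setminus P^\circ$, and because $P = P_{\mathsf{int}} \cup P_0$ is a disjoint union it suffices to prove $P^\circ = P_{\mathsf{int}}$. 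The algebraic fact I would use throughout is that, since $k-1=n$, the matrix $D_\alpha\tilde F(x,\alpha)$ is square and invertible for every $x\in P$ and $\alpha\in A(x)$: writing $D_\alpha\tilde F(x,\alpha) w = Df(x)^\top\tilde w$ with $\tilde w = (w,-\sum_{i} w_i)^\top$ (so $\sum_i \tilde w_i = 0$), invertibility follows because $rk(Df(x))=k-1$ forces $\ker(Df(x)^\top)$ to be one-dimensional and spanned by the KKT multiplier, whose components sum to $1$, so no nonzero $\tilde w$ with vanishing component sum lies in it.

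For $P_{\mathsf{int}}\subseteq P^\circ$ I would show $P_{\mathsf{int}}$ is open. Given $x_0\in P_{\mathsf{int}}$ with $\alpha_0\in\Delta^{k-1}$, invertibility of $D_\alpha\tilde F(x_0,\alpha_0)$ lets me apply the Implicit Function Theorem to $\tilde F(x,\alpha)=0$ and solve $\alpha$ as a $C^1$-function $\psi$ of $x$ on a neighborhood $V$ of $x_0$. Shrinking $V$ so that $\psi(V)\subseteq\Delta^{k-1}$ (possible since $\Delta^{k-1}$ is open), every $x\in V$ satisfies $\tilde F(x,\psi(x))=0$ with $\psi(x)\in\Delta^{k-1}$, hence $V\subseteq P_{\mathsf{int}}$. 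This direction needs only the unconditional invertibility of $D_\alpha\tilde F$ and already yields $\partial P = P\setminus P^\circ\subseteq P\setminus P_{\mathsf{int}} = P_0$.

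The hard direction is $P_0\subseteq\partial P$, i.e.\ that no point of $P_0$ is interior to $P$. Here I would reverse the roles and solve for $x$ in terms of $\alpha$. Let $x_0\in P_0$, so its unique multiplier satisfies $\alpha_0\in\partial\Delta^{k-1}$. Invoking invertibility of $D_x\tilde F(x_0,\alpha_0)$ — the same boundary-multiplier regularity that drives Lemmas~\ref{lem:clIntP} and~\ref{lem:clIntPTan} — the Implicit Function Theorem gives a $C^1$-map $\phi$ with $\phi(\alpha_0)=x_0$ and $\tilde F(\phi(\alpha),\alpha)=0$ on a neighborhood $U\times V$ of $(\alpha_0,x_0)$; since $D_\alpha\tilde F$ is invertible too, $D\phi = -(D_x\tilde F)^{-1}D_\alpha\tilde F$ is invertible and $\phi$ is a local diffeomorphism. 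As $\alpha_0\in\partial\Delta^{k-1}$, there are $\alpha\in U\setminus\overline{\Delta^{k-1}}$ arbitrarily close to $\alpha_0$, and for such $\alpha$ the point $x:=\phi(\alpha)$ is arbitrarily close to $x_0$. I would then argue $x\notin P$: if it were, its unique multiplier $\beta\in\overline{\Delta^{k-1}}$ would tend to $\alpha_0$ as $x\to x_0$ (continuity of the single-valued multiplier map, from uniqueness and compactness of $\overline{\Delta^{k-1}}$), hence eventually lie in $U$, where local injectivity of $\phi$ forces $\beta=\alpha\notin\overline{\Delta^{k-1}}$ — a contradiction. Thus $x_0\notin P^\circ$, so $x_0\in\partial P$.

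The main obstacle is precisely this last direction. It is not enough to know $\overline{P_{\mathsf{int}}}=P$, because a point of $P_0$ can be a ``punctured interior'' point that the closure fills in; one genuinely needs invertibility of $D_x\tilde F$ at the boundary multiplier, so that the solution map $\phi$ is open and can be pushed out of the simplex, together with the continuity-and-uniqueness bookkeeping certifying that the nearby point so produced really fails to be Pareto critical. Combining the two inclusions gives $P^\circ = P_{\mathsf{int}}$ and therefore $\partial P = P\setminus P^\circ = P_0$.
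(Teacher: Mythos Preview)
Your overall reduction (closedness of $P$, then $P^\circ=P_{\mathsf{int}}$) matches the paper. For the inclusion $P_{\mathsf{int}}\subseteq P^\circ$ the methods differ: the paper takes any sequence $y_i\to x\in P_{\mathsf{int}}$, picks unit vectors $\beta_i\in\ker(Df(y_i)^\top)$ (always nonempty since $Df(y_i)\in\mathbb{R}^{(n+1)\times n}$), passes to a limit, and uses that $\ker(Df(x)^\top)$ is one-dimensional and spanned by a strictly positive vector to conclude the $\beta_i$ eventually rescale into the open simplex, hence $y_i\in P_{\mathsf{int}}$; you instead apply the Implicit Function Theorem via the square invertible matrix $D_\alpha\tilde F$. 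Both arguments are valid, and yours exploits $k-1=n$ more directly.

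The gap is exactly where you flag it. For $P_0\subseteq\partial P$ you invoke invertibility of $D_x\tilde F(x_0,\alpha_0)$ at a boundary multiplier $\alpha_0\in\partial\Delta^{k-1}$, but neither the lemma's hypothesis nor the standing assumption~\eqref{eq:assum_DxF_reg} supplies this: \eqref{eq:assum_DxF_reg} only constrains $(x,\alpha)\in\mathcal{M}$, i.e.\ $\alpha\in\Delta^{k-1}$. The paper's proof does not actually argue this inclusion either --- after showing $P_{\mathsf{int}}$ is open it writes ``i.e.\ $P_{\mathsf{int}}=P^\circ$'' and stops, which does not follow from openness alone. And the extra regularity cannot simply be dropped: with $n=1$, $k=2$, $f_1(x)=-x^3$, $f_2(x)=x$ one has $rk(Df)\equiv 1$ and \eqref{eq:assum_DxF_reg} on $\mathcal{M}$, yet $P=\mathbb{R}$, $\partial P=\emptyset$, while the unique multiplier at $0$ is $(1,0)$, so $P_0=\{0\}\not\subseteq\partial P$. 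Your diagnosis that boundary regularity of $D_x\tilde F$ is genuinely needed is therefore correct; under the stated hypotheses alone the full equality fails.
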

\begin{proof}
	By definition we have $\partial P = \overline{P} \setminus P^\circ$ and $P_0 = P \setminus P_{\mathsf{int}}$. To prove this assertion, we will show that $\overline{P} = P$ and $P^\circ = P_{\mathsf{int}}$. 
	
	$\overline{P} = P$: Let $(x_i)_i \in P$ be a sequence that converges to some $\bar{x} \in \mathbb{R}^n$ and $\alpha_i \in A(x_i)$. Since $\overline{\Delta^{k-1}}$ is compact we can assume w.l.o.g.~that $\alpha_i$ converges to some $\bar{\alpha} \in \overline{\Delta^{k-1}}$. Since $\tilde{F}$ is continuous, we have $0 = \lim_{i \rightarrow \infty} \tilde{F}(x_i,\alpha_i) = \tilde{F}(\bar{x},\bar{\alpha})$, and hence $\bar{x} \in P$. Consequently, $\overline{P} = P$ and $\partial P = P \setminus P^\circ$.
	
	$P^\circ = P_{\mathsf{int}}$: Let $x \in P_{\mathsf{int}}$. Assume that for all neighborhoods $U$ of $x$ in $\mathbb{R}^n$ there is some $y$ with $y \notin P_{\mathsf{int}}$. This means there is a sequence $(y_i)_i \in \mathbb{R}^n$ with $\lim_{i \rightarrow \infty} y_i = x$ and $y_i \notin P_{\mathsf{int}}$ for all $i \in \mathbb{N}$. By our assumption we have $Df(y) \in \mathbb{R}^{(n+1) \times n}$, so $dim(ker(Df(y)^{\top})) \geq 1$ for all $y \in \mathbb{R}^n$. Thus $(y_i)_i$ induces a sequence $(\beta_i)_i \in \mathbb{R}^{n+1} \setminus \{ 0 \}$ with 
	\begin{equation} \label{eq:betaKerDfT}
		Df(y_i)^{\top} \beta_i = 0 \Leftrightarrow \sum_{j = 1}^{n+1} (\beta_i)_j \nabla f_j(y_i) = 0 \quad \forall i \in \mathbb{N}.
	\end{equation}
	Since Equation~\eqref{eq:betaKerDfT} still holds if we scale every $\beta_i$ with some $\lambda_i \in \mathbb{R}^{>0}$, we can assume w.l.o.g.~that $\| \beta_i \| = 1$ for all $i \in \mathbb{N}$ and that it converges to some $\bar{\beta} \in ker(Df(x)^{\top})$ with $\| \bar{\beta} \| = 1$. Due to the assumption $rk(Df(x)) = n$, we have $dim(ker(Df(x)^{\top})) = 1$ such that $\alpha' := (\bar{\alpha},1 - \sum_{i = 1}^n \bar{\alpha}_i)$ and $\bar{\beta}$ have to be linear dependent. Since $\alpha' \in (\mathbb{R}^{>0})^n$, there is some $N \in \mathbb{N}$ and $\lambda \in \mathbb{R} \setminus \{ 0 \}$ such that $\lambda \beta_i \in (\mathbb{R}^{>0})^n$ for $i > N$ and Equation~\eqref{eq:betaKerDfT} still holds. Consequently, $(y_i)_i \in P_{\mathsf{int}}$ for $i > N$ which is a contradiction to our assumption. It follows that for all $x \in P_{\mathsf{int}}$, there is a neighborhood $U$ of $x$ in $\mathbb{R}^n$ with $U \subseteq P$, i.e.~$P_{\mathsf{int}} = P^\circ$. By this we obtain the desired result $\partial P = P \setminus P_{\mathsf{int}} = P_0$.
\end{proof}

\section{Calculating the Pareto critical set via lower-dimensional subproblems} \label{sec:CalculationParetoSetViaSubproblems}
In Section~\ref{sec:StructureParetoSet} we have shown that points on the edge $P_E$ of the Pareto critical set have a KKT multiplier where one component is zero.
We want to exploit this and consider the $k$ subproblems of \eqref{eq:MOP} where one objective function is neglected. By the results from Section~\ref{sec:StructureParetoSet}, $P_E$ is a subset of the union of the Pareto critical sets of these $k$ subproblems. (As we will see in Example~\ref{ex:reducedObj}, there are situations where this union contains a lot more than $P_E$.) Furthermore, we are going to study problems where more than one KKT multiplier is 0.


\subsection{Subproblems}
The subproblems mentioned above arise by omitting certain objective functions or, in other words, by only taking a subset of the set of objective functions: For $I \subseteq \{ 1,...,k \}$ we denote by
\begin{align}\label{eq:MOPI}
 	\min_{x \in \mathbb{R}^n} f^I(x) \tag{$\mbox{MOP}^I$}
\end{align}
the MOP where $f^I(x) := (f_i(x))_{i \in I}$, i.e. $f^I$ contains only those components of the objective function $f$ with indices in $I$. Let $P^I$ be the set of Pareto critical points of \eqref{eq:MOPI} and $F^I$, $\tilde{F}^I$ and $A^I$ be defined analogously to $F$, $\tilde{F}$ and $A$ in Section \ref{sec:StructureParetoSet} for \eqref{eq:MOPI}. Since we have not defined $P_0$ and $P_{\mathsf{int}}$ for scalar-valued MOPs, we set $P^I_0 := P^I$ and $P^I_{\mathsf{int}} := \emptyset$ if $|I| = 1$ for ease of notation. For $I = \emptyset$ let $P^I := \emptyset$ and $P_{\mathsf{int}} := \emptyset$. The following lemma shows that points that are Pareto critical with respect to a subset of the set of objective functions are also Pareto critical for the full problem: 

\begin{lemma} \label{lem:PIinP}
	$P^I \subseteq P$ $\forall I \subseteq \{1,...,k\}$.
\end{lemma}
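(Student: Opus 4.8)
The plan is to exploit the fact that Pareto criticality for the subproblem is witnessed by a KKT multiplier that extends trivially to a multiplier for the full problem. First I would take an arbitrary $x_0 \in P^I$. If $I = \emptyset$ there is nothing to show, since by convention $P^\emptyset = \emptyset$ and the empty set is contained in $P$. So assume $I \neq \emptyset$. By the definition of $P^I$ (via $F^I$, $\tilde{F}^I$ and $A^I$), there exists a KKT multiplier $\alpha^I \in (\R^{\geq 0})^{|I|}$ satisfying $\sum_{i \in I} \alpha^I_i \nabla f_i(x_0) = 0$ and $\sum_{i \in I} \alpha^I_i = 1$. Note that this also covers the scalar case $|I| = 1$: there the normalization forces the single multiplier to equal $1$, so the condition reduces to $\nabla f_i(x_0) = 0$, which is exactly what $P^I = P^I_0$ encodes under the conventions fixed just before the lemma.

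The key step is then to pad $\alpha^I$ with zeros. Define $\alpha \in (\R^{\geq 0})^k$ by $\alpha_i := \alpha^I_i$ for $i \in I$ and $\alpha_i := 0$ for $i \notin I$. Substituting into the full KKT system and using that the neglected terms vanish gives $\sum_{i = 1}^k \alpha_i \nabla f_i(x_0) = \sum_{i \in I} \alpha^I_i \nabla f_i(x_0) = 0$ and $\sum_{i = 1}^k \alpha_i = \sum_{i \in I} \alpha^I_i = 1$. Hence $\alpha$ is a valid KKT multiplier for \eqref{eq:MOP} at $x_0$, so $x_0 \in P$, and since $x_0 \in P^I$ was arbitrary we conclude $P^I \subseteq P$. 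There is essentially no real obstacle here: the entire content is the observation that extending a subproblem multiplier by zero components preserves both the vanishing gradient combination and the normalization $\sum_i \alpha_i = 1$, because each neglected objective contributes a term $0 \cdot \nabla f_i(x_0)$. The only places requiring a moment's attention are the degenerate cases $I = \emptyset$ and $|I| = 1$, which are both dispatched by the notational conventions introduced immediately before the statement.
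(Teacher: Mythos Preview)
Your proof is correct and follows exactly the same approach as the paper: take a KKT multiplier for the subproblem and extend it by zeros to obtain a KKT multiplier for the full problem. Your version is slightly more careful in explicitly addressing the degenerate cases $I=\emptyset$ and $|I|=1$, but otherwise the arguments are identical.
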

\begin{proof}
	Let $I = \{ i_1,...,i_{|I|} \} \subseteq \{1,...,k\}$ and $x_0 \in P^I$. Then
	\begin{align*}
		\exists \alpha^I \in (\mathbb{R}^{\geq 0})^{|I|} : \nabla f_{i_1}(x_0) \alpha_1^I + \cdots + \nabla f_{i_{|I|}}(x_0) \alpha_{|I|}^I = 0 \text{ and } \sum_{i = 1}^{|I|} \alpha_i^I = 1.
	\end{align*}
	Define $\alpha \in \mathbb{R}^k$ via
	\begin{equation*}
		\alpha_i := \begin{cases}
			\alpha^I_i & i \in I, \\
			0 & \text{otherwise}.
		\end{cases}
	\end{equation*}
	Then $\sum_{i = 1}^k \alpha_i = \sum_{j = 1}^{|I|} \alpha^{|I|}_j = 1$,  
	\begin{equation*}
		\nabla f_1(x_0) \alpha_1 + \cdots + \nabla f_k(x_0) \alpha_k = 0,
	\end{equation*}
	and $x_0 \in P$ which yields the desired result.
\end{proof}

\begin{remark}
	Lemma~\ref{lem:PIinP} can be generalized to $P^J \subseteq P^K$ if $J \subseteq K$.
\end{remark}

In Corollary~\ref{cor:main1cor} we had to assume that $rk(Df(x)) = k-1$ for all $x \in P_{\mathsf{int}}$ to see that $P_E \subseteq P_0$. The following lemma shows that if this rank condition is violated, we can still find a subproblem \eqref{eq:MOPI} such that $x$ is Pareto critical with respect to \eqref{eq:MOPI} and $rk(Df^I(x)) = |I| - 1$, i.e.~that Corollary~\ref{cor:main1cor} can be applied to that subproblem. In particular, this induces a decomposition of \eqref{eq:MOP} into subproblems that satisfy the rank condition.

\begin{lemma} \label{lem:reducedObj}
	Let $x_0 \in P$. Then there exists some $I \subseteq \{1,...,k\}$ with $|I| = rk(Df(x_0)) + 1$ such that $rk(Df^I(x_0)) = rk(Df(x_0))$ and $x_0 \in P^I$. Moreover, if $x_0 \in P_0$ then $x_0 \in P^I_0$. In particular, there is some $J \subseteq \mathcal{P}(\{1,...,k\})$ with 
	\begin{equation*}
	P = \bigcup_{I \in J} P^I \quad \text{ and } \quad P_0 \subseteq \bigcup_{I \in J} P_0^I
	\end{equation*}
	and $\forall x \in P \exists I \in J$ with $rk(Df(x)) = rk(Df^I(x)) = |I| - 1$.
\end{lemma}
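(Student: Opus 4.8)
The plan is to exploit the convex-geometric content of the KKT condition. Since $x_0 \in P$, the origin lies in the convex hull of the gradients $\nabla f_1(x_0), \dots, \nabla f_k(x_0)$, which span a linear subspace $V$ of dimension $r := rk(Df(x_0))$. First I would fix a KKT multiplier $\alpha$ of $x_0$ whose support $S := \{ i : \alpha_i > 0 \}$ is minimal (with respect to inclusion) among all multipliers of $x_0$. A perturbation argument then shows that the gradients indexed by $S$ are affinely independent: if there were $\lambda \neq 0$ with $\sum_{i \in S} \lambda_i \nabla f_i(x_0) = 0$ and $\sum_{i \in S}\lambda_i = 0$, then $\alpha + t\lambda$ would remain a valid multiplier until some positive component vanishes, contradicting minimality of $S$. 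Lifting to the vectors $(\nabla f_i(x_0),1)^\top$ shows affine independence forces $rk(Df^S(x_0)) \geq |S|-1$, while the positive dependence $\sum_{i\in S}\alpha_i \nabla f_i(x_0)=0$ forces $rk(Df^S(x_0)) \leq |S|-1$; hence $rk(Df^S(x_0)) = |S| - 1 =: s \leq r$.

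Next I would augment $S$ to the desired index set. Because the full gradient family spans $V$ while the $S$-gradients span only an $s$-dimensional subspace, I can greedily adjoin $r - s$ further indices whose gradients successively enlarge the span, obtaining $I \supseteq S$ with $rk(Df^I(x_0)) = r$ and $|I| = (s+1) + (r-s) = r+1$. Extending $\alpha|_S$ by zeros on $I \setminus S$ yields a valid multiplier for \eqref{eq:MOPI}, so $x_0 \in P^I$. This establishes the first assertion, with $|I| = rk(Df(x_0))+1$, $rk(Df^I(x_0)) = rk(Df(x_0))$ and $x_0 \in P^I$.

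The main obstacle is the \emph{moreover} clause, i.e. transferring membership in $P_0$ to the subproblem, and I would isolate the crux as a sub-claim $(\ast)$: if $rk(Df^I(x_0)) = rk(Df(x_0))$ and \eqref{eq:MOPI} admits a \emph{strictly positive} multiplier, then so does \eqref{eq:MOP}. To prove $(\ast)$, note that since the $I$-gradients already span $V$, every omitted gradient $\nabla f_j(x_0)$ with $j \notin I$ is a linear combination of the $I$-gradients; perturbing a strictly positive $I$-multiplier in the corresponding direction gives, after renormalization, a multiplier that is strictly positive on $I \cup \{j\}$, and averaging these over all $j \notin I$ produces a strictly positive multiplier of \eqref{eq:MOP}. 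Applying $(\ast)$ in contrapositive form to the $I$ constructed above, $x_0 \in P_0$ forces $x_0 \notin P^I_{\mathsf{int}}$; since $rk(Df^I(x_0)) = |I|-1$ makes the $I$-multiplier unique by the first part of Lemma~\ref{lem:alphaUnique}, this means precisely $x_0 \in P^I_0$. I expect this transfer step to be the delicate part, since the naive padding-by-zeros gives a multiplier whose zeros sit \emph{outside} $I$, which alone does not certify $x_0 \in P^I_0$; it is $(\ast)$ that rules out a hidden strictly positive $I$-multiplier.

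Finally, collecting one such index set $I = I(x)$ for each $x \in P$ (chosen, for $x \in P_0$, so that $x \in P^{I(x)}_0$) yields a finite family $J \subseteq \mathcal{P}(\{1,\dots,k\})$. The inclusion $\bigcup_{I\in J} P^I \subseteq P$ is Lemma~\ref{lem:PIinP}, the reverse inclusion and $P_0 \subseteq \bigcup_{I\in J}P_0^I$ hold pointwise by the construction of $J$, and the identity $rk(Df(x)) = rk(Df^{I(x)}(x)) = |I(x)| - 1$ is exactly what the augmentation guarantees, which completes the proof.
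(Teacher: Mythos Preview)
Your argument is correct and takes a cleaner, more geometric route than the paper's. The paper proceeds iteratively: at each step it partitions the current index set into the ``essential'' indices $J$ (those whose removal drops the rank) and the complement $K$, then invokes Corollary~\ref{cor:rkTooSmall_rkFull} on $f^K$ to locate a removable index $l \in K$; the $P_0$ preservation is checked one removal at a time via a case analysis showing that the unique index forced to carry zero weight lies in $J$ and is therefore never removed. You instead build $I$ in one shot: the Carath\'eodory-type observation that a minimal-support multiplier has affinely independent gradients pins down $rk(Df^S(x_0)) = |S|-1$ immediately, and the greedy augmentation to full rank is straightforward linear algebra. The real gain is your sub-claim $(\ast)$, which isolates the $P_0$ transfer as a single clean statement and proves it by a direct averaging construction, bypassing the paper's inductive bookkeeping entirely. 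One minor remark: the appeal to uniqueness via Lemma~\ref{lem:alphaUnique} at the end of your $P_0$ argument is not actually needed---once $(\ast)$ gives $x_0 \notin P^I_{\mathsf{int}}$, the partition $P^I = P^I_{\mathsf{int}} \cup P^I_0$ already yields $x_0 \in P^I_0$; the boundary case $|I|=1$ is vacuous because $r=0$ makes every simplex point a multiplier and hence forces $x_0 \in P_{\mathsf{int}}$.
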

\begin{proof}
	If $rk(Df(x_0)) = k-1$ then we can simply choose $I = \{1,...,k\}$. We therefore now assume $rk(Df(x_0)) < k-1$.
	Let $J := \{ j \in \{1,...,k\} : rk(Df^{I \setminus \{j\} }(x_0)) < rk(Df(x_0)) \}$ be the set of linear independent objectives and $K := \{1,...,k\} \setminus J$. Then we have $\alpha_j = 0$ for all $j \in J$ and all $\alpha \in (\mathbb{R}^{\geq 0})^k$ with $F(x_0,\alpha) = 0$ (since the $j^{\mathsf{th}}$ gradient is linear independent). By construction we have $rk(Df^J(x_0)) = |J|$ and 
	\begin{align*}
		&k - 1 > rk(Df(x_0)) = rk(Df^K(x_0)) + |J| \\
		\Leftrightarrow \quad & rk(Df^K(x_0)) < k - |J| - 1 = |K| - 1.
	\end{align*}
	Consequently, $x_0 \in P^K$ and we can apply Corollary \ref{cor:rkTooSmall_rkFull} to $f^K$ to see that there is some $\alpha' \in (\mathbb{R}^{\geq 0})^{|K|}$ with $F^K(x_0,\alpha') = 0$ and some $l \in K$ with $\alpha'_l = 0$ and $rk(Df^{K \setminus \{l\} }(x_0)) = rk(Df^K(x_0))$ (since $K := \{1,...,k\} \setminus J$). 
	Thus, if we set $I = \{1,...,k\} \setminus \{l\}$ we have $rk(Df^I(x_0)) = rk(Df(x_0))$ and $x_0 \in P^I$. \\
	We will now show that if $x_0 \in P_0$, then after removing some $l$ as above, there is still a component where all KKT multipliers are zero. So assume $x_0 \in P_0$. Let $A'(x_0) := \{ \alpha \in (\mathbb{R}^{\geq 0})^k : \sum_{i = 1}^k \alpha_i = 1 \text{ and } F(x_0,\alpha) = 0 \}$. We will show that after neglecting the $l^{\mathsf{th}}$ objective, there still exists some $j \in I = \{1,...,k\} \setminus \{ l \}$ with $\alpha_j = 0$ for all $\alpha \in A'(x_0)$, so $x_0 \in P^I_0$. For all $\alpha \in A'(x_0)$ there is some $j \in \{1,...,k\}$ with $\alpha_j = 0$. By the structure of $A(x_0)'$, there has to be $j \in \{1,...,k\}$ with $\alpha_j = 0$ for all $\alpha \in A'(x_0)$. If there are two such indices, we are done since only one element of $I$ is removed. Hence, assume there is only one such $j$. \\
	We will show that $j \in J$ by contradiction, so assume $j \notin J$. Then $rk(Df(x_0)) = rk(Df^{I \setminus \{j\}}(x_0))$, and there is some $\beta \in \mathbb{R}^k$ with $\beta_j = 0$ and $Df(x_0)^{\top} \beta = \nabla f_j(x_0)$. If we set $\beta' := -\beta + e_j$, where $e_j$ is the $j$-th unit vector (in Cartesian coordinates), then $F(x_0,\beta') = 0$. By the assumption of the uniqueness of $j$ there has to be some $\alpha' \in A'(x_0)$ with $\alpha'_i > 0$ for all $i \neq j$. So there has to be $s > 0$ such that $\gamma \in A'(x_0)$ where
	\begin{equation*}
		\gamma := \frac{\alpha' + s \beta'}{1 - s \sum_{i = 1}^{k} \beta'_i}.
	\end{equation*}
	But we have $\gamma_j = \frac{s}{1 - s \sum_{i = 1}^{k} \beta'_i} \neq 0$ which is a contradiction to $\alpha_j = 0$ for all $\alpha \in A'(x_0)$, hence $j \in J$. As a consequence, in the above step $j$ can not be removed since $l \notin J$. \\
	Each time we apply the above procedure, $|I|$ is decreased by $1$ and $rk(Df^I(x_0))$ does not change, so there has to be some $I$ with $|I| - 1 = rk(Df(x_0))$ and $x_0 \in P^I$. Also $x_0 \in P^I_0$ if $x_0 \in P_0$. \\
\end{proof}

Lemma~\ref{lem:reducedObj} also shows that it suffices to solve a number of subproblems with fewer objective functions instead of the full MOP to get the complete Pareto critical set (and not only the edge) if the rank of the Jacobian of $f$ is small (relative to $k$). (For instance, such a situation always occurs when $k > n + 1$.) This is especially useful since the complexity for solving MOPs in general increases significantly with the number of objectives. Unfortunately, Lemma \ref{lem:reducedObj} does not help for finding the smallest set $J$ of subsets of the objectives that need to be solved, so in general we have to consider all possible subsets. An example for Lemma~\ref{lem:reducedObj} is shown below:

\begin{example} \label{ex:reducedObj}
	a) Consider the MOP $\min_{x \in \mathbb{R}^2} f(x)$ with
	\begin{equation*}
		f(x) := \begin{pmatrix}
			(x_1 + 1)^2 + (x_2 + 1)^2 \\
			(x_1 - 1)^2 + (x_2 + 1)^2 \\
			x_1^2 + (x_2 - 1)^2 \\
			x_1^2 + x_2^2
		\end{pmatrix}.
	\end{equation*}
	The Pareto critical set is the triangle with corners $(-1,-1)$, $(1,-1)$ and $(0,1)$. Since $k = 4$ and $n = 2$, we must have $rk(Df(x)) < k - 1 = 3$. By Lemma~\ref{lem:reducedObj} we can write $P$ as the union of some $P^I$. We can choose for example to solve \eqref{eq:MOPI} with $I \in \{ \{1,2,4\}, \{2,3,4\}, \{1,3,4\} \}$ or with $I = \{ 1,2,3 \}$. The situation is depicted in Figure \ref{fig:reducedObjA}. (Note that in general, it will obviously not be sufficient to only solve one \eqref{eq:MOPI} as will be seen in the next example.) \\
	b) Consider the MOP $\min_{x \in \mathbb{R}^2} f(x)$ with
	\begin{equation*}
		f(x) := \begin{pmatrix}
			(x_1 + 1)^2 + (x_2 + 1)^2 \\
			(x_1 - 1)^2 + (x_2 + 1)^2 \\
			(x_1 - 1)^2 + (x_2 - 1)^2 \\
			(x_1 + 1)^2 + (x_2 - 1)^2
		\end{pmatrix}.
	\end{equation*}
	The Pareto critical set is the square with corners $(-1,-1)$, $(1,-1)$, $(1,1)$ and $(-1,1)$. Here we can choose for example $I \in \{ \{1,2,4\}, \{2,3,4\} \}$ (the bottom-left triangle and the upper-right triangle), which is depicted in Figure \ref{fig:reducedObjB}. Note that Lemma \ref{lem:reducedObj} does not state anything about $P^I_{\mathsf{int}}$, so if we apply it in some $x_0 \in P$ we generally do not know if there is some $I$ (that satisfies the rank condition) with $x_0 \in P^I_{\mathsf{int}}$. In this example there is no $I$ so that $(0,0)^{\top} \in P_{\mathsf{int}}^I$ although $(0,0)^{\top} \in P_{\mathsf{int}}$.
	\begin{figure}[ht] 
		\begin{subfigure}[t]{.5\textwidth}
			\centering
			\includegraphics[scale=0.6]{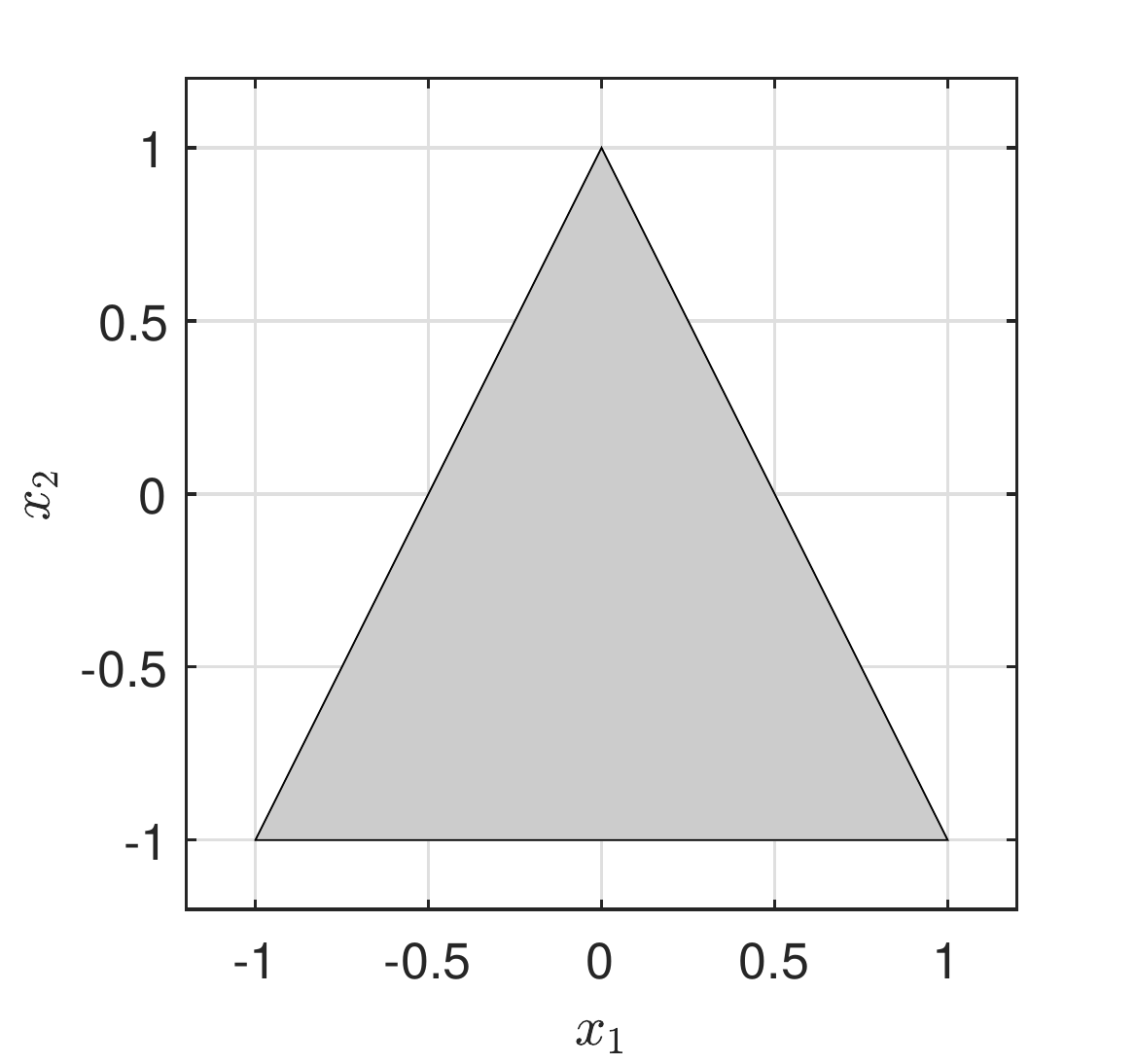}
			\caption{$P$}
		\end{subfigure}
		\begin{subfigure}[t]{.49\textwidth}
			\centering
			\includegraphics[scale=0.6]{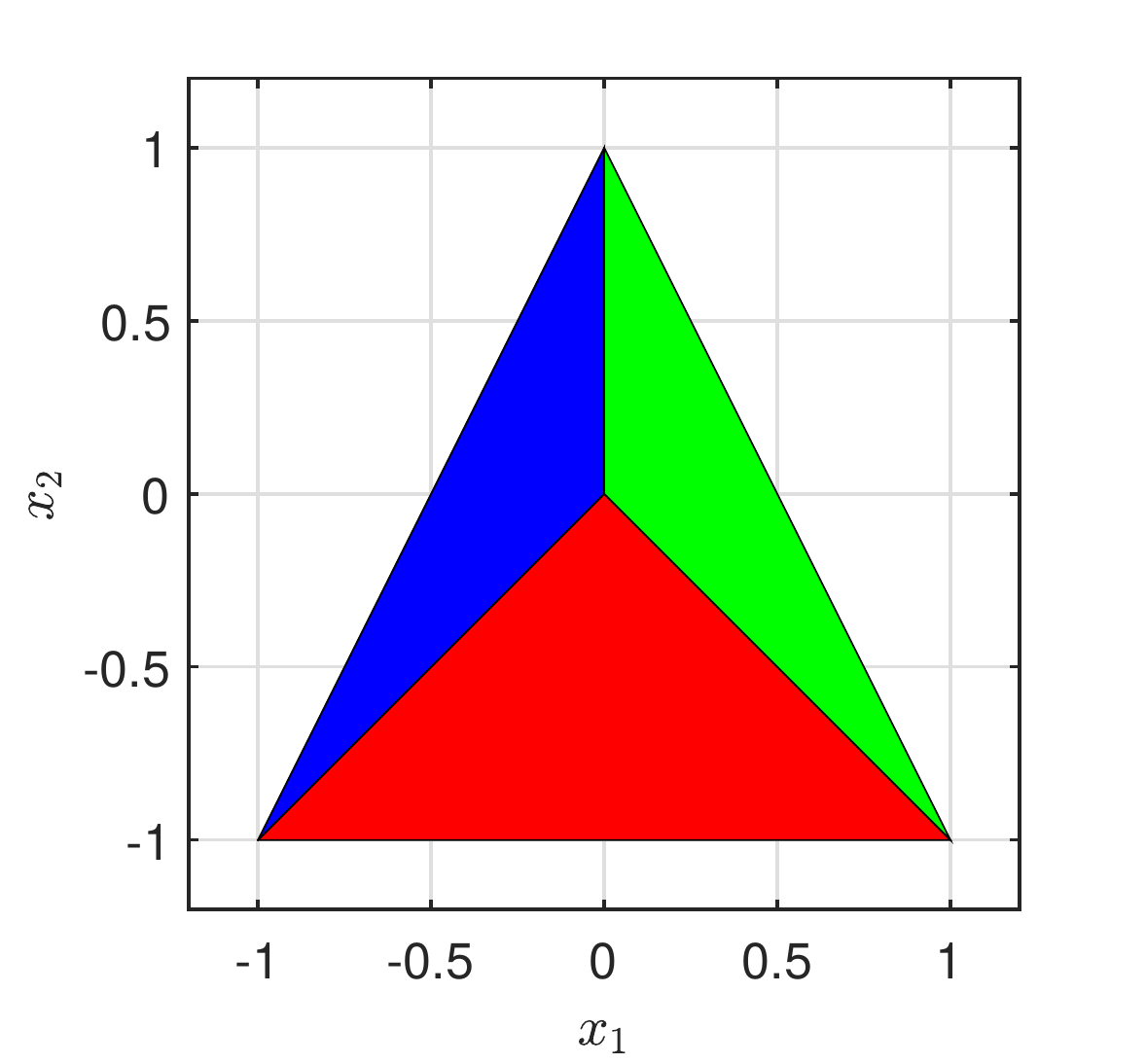}
			\caption{A decomposition of $P$}
		\end{subfigure}
		\caption{$P$ and the decomposition $\{ \{1,2,4\} \text{ (red)}, \{2,3,4\} \text{ (green)}, \{1,3,4\} \text{ (blue)} \}$ for Example \ref{ex:MNoManifold}, a).}
		\label{fig:reducedObjA}
	\end{figure}
	
	\begin{figure}[ht] 
		\begin{subfigure}[t]{.5\textwidth}
			\centering
			\includegraphics[scale=0.6]{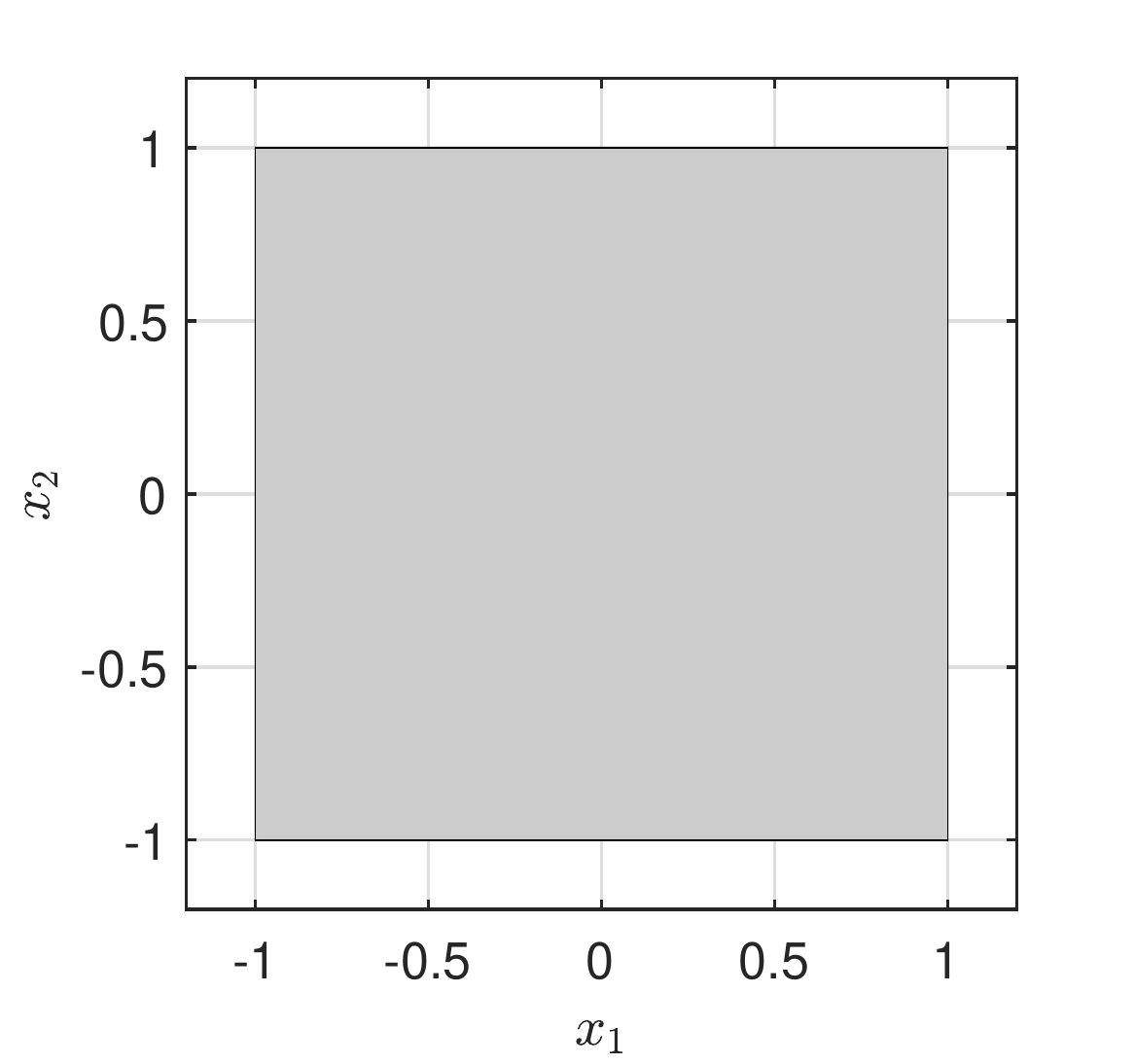}
			\caption{$P$}
		\end{subfigure}
		\begin{subfigure}[t]{.49\textwidth}
			\centering
			\includegraphics[scale=0.6]{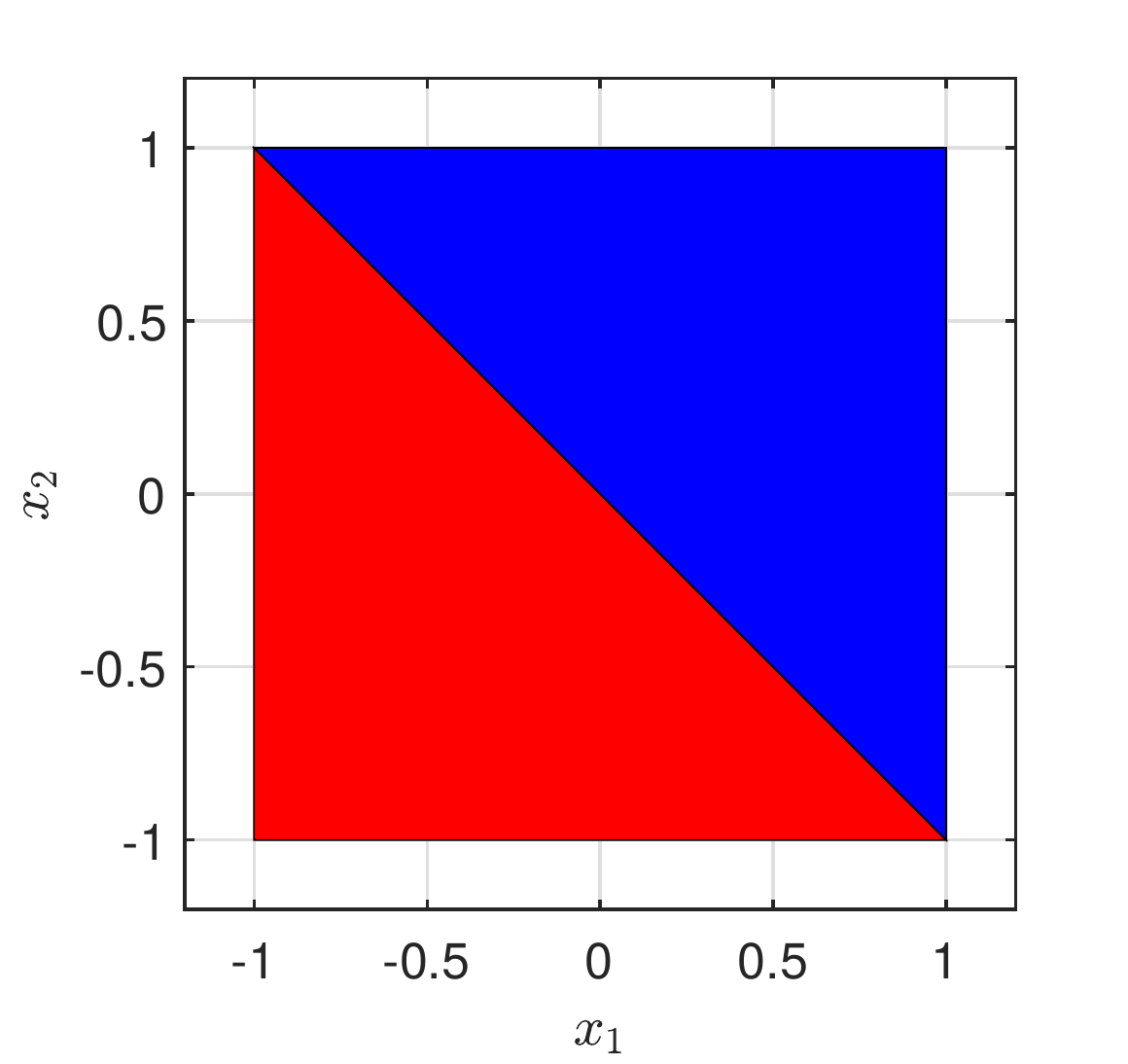}
			\caption{A decomposition of $P$}
		\end{subfigure}
		\caption{$P$ and the decomposition $\{ \{1,2,4\} \text{ (red)}, \{2,3,4\} \text{ (blue)} \}$ for Example \ref{ex:MNoManifold}, b).}
		\label{fig:reducedObjB}
	\end{figure}
\end{example}

As described above, Lemma \ref{lem:reducedObj} states how the complete Pareto critical set can be obtained by solving subproblems with lesser objective functions. Roughly speaking, these subproblems are made so that we can apply Theorem \ref{thm:main1} to see that one additional objective can be omitted to only get the edge of the critical set. This is done in the following lemma.

\begin{lemma} \label{lem:edgeInJ}
	Let $x_0 \in P_E$ and $m := \max_{x \in P} rk(Df(x))$. Then there has to be some $I \in \mathcal{P}(\{1,...,k\})$ with $|I| \leq m + 1$ such that either $I = \{ i \}$ and $\nabla f_i(x_0) = 0$ or $A^I(x_0) \cap \partial \Delta^{|I|-1} \neq \emptyset$.
\end{lemma}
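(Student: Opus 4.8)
The plan is to reduce the whole statement to producing a single KKT multiplier of the full problem whose support is small. Concretely, I claim it suffices to find some $\beta \in (\mathbb{R}^{\geq 0})^k$ with $\sum_{i} \beta_i \nabla f_i(x_0) = 0$, $\sum_i \beta_i = 1$, whose support $S := \{ i : \beta_i > 0 \}$ satisfies $|S| \leq m$. Indeed, if $|S| = 1$, say $S = \{i\}$, then $\nabla f_i(x_0) = 0$ and $I = \{i\}$ is the first alternative. If $2 \leq |S| \leq m$, then, since $m \leq k-1$ (every $x \in P$ admits a nontrivial nonnegative dependence of the gradients, so $rk(Df(x)) \leq k-1$), we may pick $j \notin S$ and set $I := S \cup \{j\}$; then $|I| = |S| + 1 \leq m+1$, and $\beta$ viewed as a multiplier of $(\mathrm{MOP}^I)$ is supported on $S \subsetneq I$, hence lies in $A^I(x_0) \cap \partial \Delta^{|I|-1}$. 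So the lemma follows once such a small-support multiplier is exhibited.

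First I would apply Lemma~\ref{lem:reducedObj} to $x_0 \in P$ to obtain $I^* \subseteq \{1,\dots,k\}$ with $x_0 \in P^{I^*}$, $|I^*| = rk(Df(x_0)) + 1 \leq m+1$ and $rk(Df^{I^*}(x_0)) = rk(Df(x_0)) = |I^*| - 1$. By Lemma~\ref{lem:alphaUnique} the subproblem $(\mathrm{MOP}^{I^*})$ then has a \emph{unique} multiplier $\beta$ at $x_0$, which I view as a multiplier of the full problem supported on $I^*$. If $\beta$ has a vanishing component on $I^*$, then $|S| \leq |I^*| - 1 = rk(Df(x_0)) \leq m$; and if $rk(Df(x_0)) < m$, then even $|S| \leq |I^*| = rk(Df(x_0)) + 1 \leq m$. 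In both situations the reformulation above concludes the proof.

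The remaining -- and genuinely hard -- case is $rk(Df(x_0)) = m$ together with $\beta \in \Delta^{|I^*|-1}$ strictly positive, so that $x_0 \in P^{I^*}_{\mathsf{int}}$, $|I^*| = m+1$, and $S = I^*$ has size $m+1 > m$. Here I would show that this case is incompatible with $x_0 \in P_E$, by proving that $x_0$ is really an interior point of $P$. The crucial point is the maximality of $m$ on $P$: for $y$ in a neighbourhood $U$ of $x_0$, lower semicontinuity of the rank gives $rk(Df^{I^*}(y)) \geq m$, while for $y \in P \cap U$ maximality forces $rk(Df^{I^*}(y)) \leq rk(Df(y)) \leq m$; hence $rk(Df^{I^*}(y)) = m = |I^*|-1$ on $P \cap U$. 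The one-dimensional space of linear dependences among the rows of $Df^{I^*}(y)$ then varies continuously and stays strictly positive near $x_0$ (where it is $\beta$), so $P \cap U \subseteq P^{I^*}_{\mathsf{int}}$; combined with $P^{I^*} \subseteq P$ (Lemma~\ref{lem:PIinP}) this gives $P \cap U = P^{I^*}_{\mathsf{int}} \cap U$. An analogous continuity argument for the strictly positive full-problem multiplier at $x_0$ shows $P \cap U \subseteq P_{\mathsf{int}}$, so that locally $P_{\mathsf{int}} = P = P^{I^*}_{\mathsf{int}}$.

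With this local identification the contradiction follows: the tangent cone is a local object, so $Tan(P_{\mathsf{int}}, x_0) = Tan(P^{I^*}_{\mathsf{int}}, x_0)$, and since $(\mathrm{MOP}^{I^*})$ satisfies the full-rank condition $rk(Df^{I^*}(x_0)) = |I^*|-1$ at the interior multiplier $\beta$, Lemma~\ref{lem:prTanSpace_in_tanCone} and Lemma~\ref{lem:tanCone_in_prTanSpace} applied to $(\mathrm{MOP}^{I^*})$ give $Tan(P^{I^*}_{\mathsf{int}}, x_0) = pr_x(T_{(x_0,\beta)} \mathcal{M}^{I^*})$, a linear subspace. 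Then $Tan(P_{\mathsf{int}}, x_0) = -Tan(P_{\mathsf{int}}, x_0)$, contradicting $x_0 \in P_E$. I expect the main obstacle to be exactly this last case: making the two continuity arguments rigorous (constancy of the rank on $P \cap U$, and continuous, sign-preserving dependence of the multipliers) and verifying that the regularity hypotheses needed to invoke Lemma~\ref{lem:prTanSpace_in_tanCone} and Lemma~\ref{lem:tanCone_in_prTanSpace} for the subproblem -- in particular invertibility of $D_x \tilde{F}^{I^*}$ at $(x_0,\beta)$ -- are available.
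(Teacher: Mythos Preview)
Your plan is sound and leads to a correct proof, but it follows a genuinely different route than the paper. The paper works with the \emph{global} decomposition $J$ from Lemma~\ref{lem:reducedObj}, splits into two cases according to whether $Tan(P_{\mathsf{int}},x_0) = Tan(P,x_0)$, and in the main case writes $Tan(P,x_0) = \bigcup_{I \in J} Tan(P^I,x_0)$ (using finiteness of $J$ and closedness of each $P^I$) to locate some $I \in J$ with asymmetric tangent cone, then invokes Theorem~\ref{thm:main1} for that subproblem. You instead fix a \emph{single} $I^*$ from Lemma~\ref{lem:reducedObj} at $x_0$, dispose of the easy cases where the resulting multiplier already has support of size at most $m$, and in the remaining case exploit the maximality of $m$ directly: since $rk(Df(y)) \leq m$ for $y \in P$ while lower semicontinuity gives $rk(Df^{I^*}(y)) \geq m$ nearby, the rank of $Df^{I^*}$ is locally constant on $P$, forcing $P = P^{I^*}_{\mathsf{int}} = P_{\mathsf{int}}$ in a neighbourhood of $x_0$ and reducing everything to the subproblem. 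This is an elegant shortcut that avoids the tangent-cone decomposition altogether. Two remarks on the details: (i) your ``analogous continuity argument'' for $P \cap U \subseteq P_{\mathsf{int}}$ is not literally continuity of a single multiplier, since the full-problem kernel $ker(Df(y)^\top)$ has dimension $k-m$ and there is no canonical element to track; what actually works is a pointwise construction: once $y \in P^{I^*}_{\mathsf{int}}$ and $rk(Df(y)) = m$, every $\nabla f_j(y)$ with $j \notin I^*$ lies in the span of $\{\nabla f_i(y) : i \in I^*\}$, and putting a small weight $\epsilon > 0$ on each such $j$ can be absorbed into the strictly positive $I^*$-multiplier to yield a strictly positive full multiplier; (ii) the invertibility of $D_x \tilde{F}^{I^*}(x_0,\beta)$ you flag is indeed an extra hypothesis beyond \eqref{eq:assum_DxF_reg}, but the paper's proof applies Theorem~\ref{thm:main1} to subproblems and thus tacitly relies on the same assumption.
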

\begin{proof}
	Let $J$ be as in Lemma \ref{lem:reducedObj} and $x_0 \in P_E$. By construction we have $|I| \leq m+1$ for all $I \in J$. If there is some $I \in J$ with $x_0 \in P^I$ and $|I| = 1$ we have $\nabla f_i(x_0) = 0$ for $I = \{ i \}$ and we are done. So assume $|I| > 1$ for all $I \in J$ with $x_0 \in P^I$. \\
	\emph{Case 1}: $Tan(P_{\mathsf{int}},x_0) \neq Tan(P,x_0)$. We must have $v \in Tan(P,x_0)$ with $v \notin Tan(P_{\mathsf{int}},x_0)$. Since $P_0 = P \setminus P_{\mathsf{int}}$, this means $U \cap P_0 \neq \emptyset$ for all neighborhoods $U$ of $x_0$ and thus, $x_0 \in \overline{P_0}$. Let $(y_j)_j \in P_0$ with $\lim_{j \rightarrow \infty} y_j = x_0$. By Lemma \ref{lem:reducedObj} and since $|J|$ is finite, there has to be some $K \in J$ with $y_j \in P_0^K$ infinitely many times such that $x_0 \in \overline{P_0^K}$. In particular, $\partial \Delta^{|K|-1} \cap A^K(x_0) \neq \emptyset$. \\
	\emph{Case 2}: $Tan(P_{\mathsf{int}},x_0) = Tan(P,x_0)$. This means
	\begin{equation} \label{eq:TanUnion}
		Tan(P_{\mathsf{int}},x_0) = Tan(P,x_0) = Tan \left( \bigcup_{I \in J} P^I, x_0 \right) = \bigcup_{I \in J} Tan(P^I,x_0).
	\end{equation} 
	For the last equality note that $\supseteq$ is obvious and $\subseteq$ follows from the fact that $P^I$ is closed for all $I$. Since $x_0 \in P_E$ there has to be $I \in J$ with $Tan(P^I,x_0) \neq -Tan(P^I,x_0)$. If $Tan(P^I_{\mathsf{int}},x_0) \neq Tan(P^I,x_0)$, we have $x_0 \in \overline{P_0^I}$ and we are done as in Case 1. Otherwise, we get $x_0 \in P_E^I$. If $rk(Df^I(x_0)) = |I| - 1$ we can apply Theorem \ref{thm:main1} to obtain $x_0 \in P_0^I$ (so $A^I(x_0) \subseteq \partial \Delta^{|I|-1}$). If $rk(Df^I(x_0)) < |I| - 1$ we can apply Corollary \ref{cor:rkTooSmall_rkFull} to \eqref{eq:MOPI} which yields $\partial \Delta^{|I|-1} \cap A^I(x_0) \neq \emptyset$. 
	
\end{proof}

By uniting over all possible subsets of $\{1,...,k\}$ of appropriate size, the last lemma can be used to get the following corollary.

\begin{corollary} \label{cor:edgeInReduced}
	Assume $m := \max_{x \in P} rk(Df(x)) > 0$. Then
	\begin{equation*}
		P_E \subseteq \bigcup_{I \in \mathcal{P}(\{1,...,k\}), |I| = m} P^I.
	\end{equation*}
\end{corollary}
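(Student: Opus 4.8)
The plan is to derive Corollary~\ref{cor:edgeInReduced} directly from Lemma~\ref{lem:edgeInJ} by taking a union over all subsets of the appropriate size. First I would fix an arbitrary $x_0 \in P_E$ and apply Lemma~\ref{lem:edgeInJ} to obtain some $I_0 \in \mathcal{P}(\{1,\dots,k\})$ with $|I_0| \leq m+1$ such that either $I_0 = \{i\}$ with $\nabla f_i(x_0) = 0$, or $A^{I_0}(x_0) \cap \partial \Delta^{|I_0|-1} \neq \emptyset$. In both cases the goal is to produce a subset $I$ of size \emph{exactly} $m$ with $x_0 \in P^{I}$, so that $x_0$ lies in the right-hand union.

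The key step is to convert the ``boundary multiplier'' conclusion of Lemma~\ref{lem:edgeInJ} into membership in a Pareto critical set of a strictly smaller subproblem, and then pad or trim the index set to hit size exactly $m$. If $A^{I_0}(x_0) \cap \partial \Delta^{|I_0|-1} \neq \emptyset$, then there is a KKT multiplier for $f^{I_0}$ with at least one zero component, say the component corresponding to index $\ell \in I_0$; dropping that objective shows $x_0 \in P^{I_0 \setminus \{\ell\}}$, a subproblem with $|I_0| - 1 \leq m$ objectives. Similarly, if $I_0 = \{i\}$ with $\nabla f_i(x_0) = 0$, then $x_0 \in P^{\{i\}}$ trivially and in fact $x_0 \in P^{I'}$ for any $I' \ni i$ by the remark following Lemma~\ref{lem:PIinP}. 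In either case I obtain some index set $I'$ with $|I'| \leq m$ and $x_0 \in P^{I'}$.

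Finally I would use monotonicity of $P^{(\cdot)}$ under inclusion, namely the generalization $P^J \subseteq P^K$ for $J \subseteq K$ noted in the remark after Lemma~\ref{lem:PIinP}, to enlarge $I'$ to a set of size exactly $m$: since $m \geq 1$ by assumption and $|I'| \leq m \leq k$, I can choose any $I \supseteq I'$ with $|I| = m$ (adding arbitrary unused indices), which gives $x_0 \in P^{I'} \subseteq P^{I}$ with $|I| = m$. Hence $x_0$ belongs to the union on the right-hand side. Since $x_0 \in P_E$ was arbitrary, the desired inclusion $P_E \subseteq \bigcup_{|I|=m} P^I$ follows.

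The main obstacle I anticipate is the bookkeeping around the \emph{exact} cardinality $|I| = m$ in the corollary statement versus the bound $|I| \leq m+1$ produced by Lemma~\ref{lem:edgeInJ}. The boundary-multiplier conclusion of the lemma effectively buys one extra omittable objective (reducing $m+1$ to $m$), but one must be careful that the subproblem on which the boundary multiplier lives already had at most $m+1$ objectives, so that after removing the zero-multiplier component exactly $m$ remain, and then invoke inclusion-monotonicity to reach size exactly $m$ rather than something smaller. Verifying that this padding is always possible (i.e.\ that $m \leq k$, which holds since ranks are bounded by $k-1 < k$) and that no degenerate case slips through is the only delicate point; everything else is a routine application of the preceding results.
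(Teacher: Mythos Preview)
Your proposal is correct and matches the paper's (implicit) argument: the paper merely states that the corollary follows from Lemma~\ref{lem:edgeInJ} by uniting over subsets of the appropriate size, and your write-up supplies exactly the missing bookkeeping---dropping the zero-multiplier component to pass from size $\leq m+1$ to size $\leq m$, and then padding via $P^{J}\subseteq P^{K}$ for $J\subseteq K$ to reach size exactly $m$. The only detail worth making explicit (which you do note) is that $m\leq k-1$ since every $x\in P$ has $\dim\ker(Df(x)^{\top})\geq 1$, so enlarging to a subset of size $m$ is always possible.
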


Corollary \ref{cor:edgeInReduced} is the main result of this section. It basically states that it suffices to consider $|I| = rk(Df)$ objective functions to compute the edge of the Pareto critical set. We will demonstrate this in the following examples.

\subsection{Examples}

\begin{example} \label{ex:reducedObjExt}
	We again consider the MOPs from Example~\ref{ex:reducedObj}. \\
	a) We have $rk(Df(x)) = 2$ for all $x \in \mathbb{R}^2$, so by Corollary \ref{cor:edgeInReduced} it suffices to consider only pairs of $2$ objective functions. The corresponding Pareto critical sets are shown in Figure~\ref{fig:reducedObjExt}~(a). To compute $P_E$ it would suffice to consider the three subproblems in $\{ \{1,2\}, \{2,3\}, \{1,3\} \}$. \\
	b) We have $rk(Df(x)) = 2$ for all $x \in \mathbb{R}^2$, so again it suffices to consider only pairs of $2$ objective functions. In this case we need to solve the four subproblems in $\{ \{1,2\}, \{2,3\}, \{3,4\}, \{4,1\} \}$ to obtain $P_E$ as shown in Figure \ref{fig:reducedObjExt} (b).
	\begin{figure}[ht] 
		\begin{subfigure}[t]{.5\textwidth}
			\centering
			\includegraphics[scale=0.6]{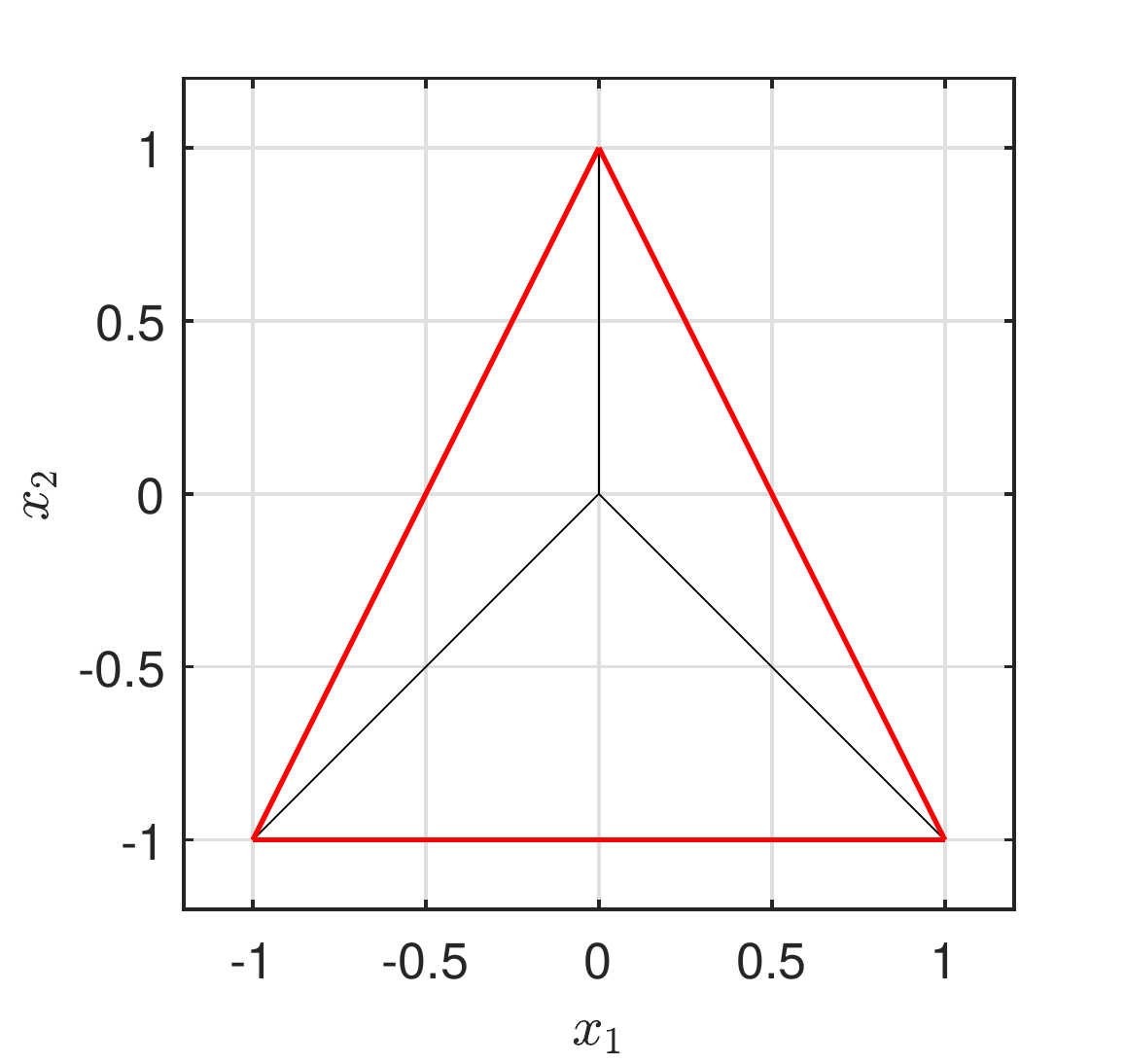}
			\caption{Example \ref{ex:reducedObj}, a)}
		\end{subfigure}
		\begin{subfigure}[t]{.49\textwidth}
			\centering
			\includegraphics[scale=0.6]{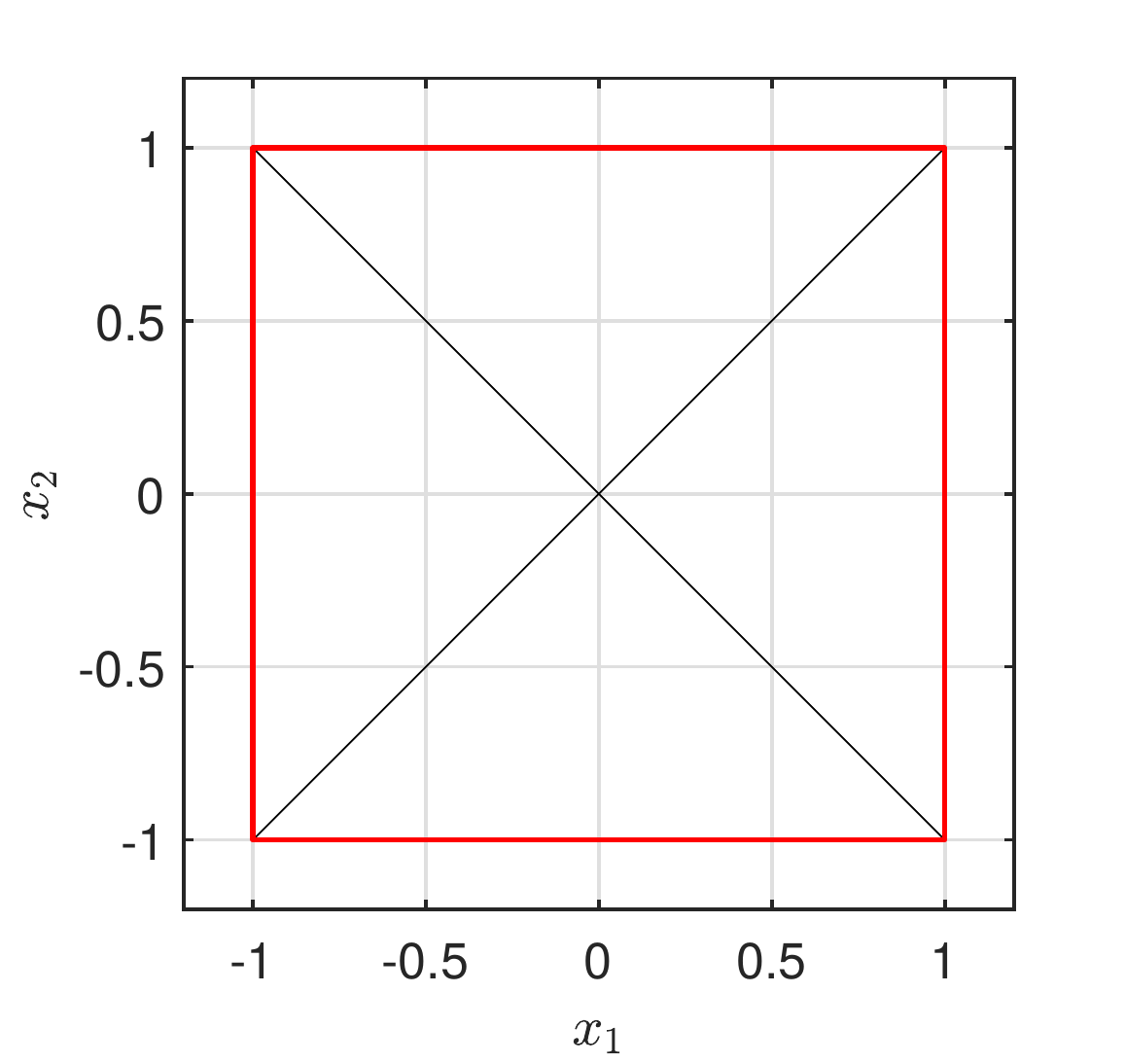}
			\caption{Example \ref{ex:reducedObj}, b)}
		\end{subfigure}
		\caption{Necessary (red) and unnecessary (black) part of Pareto critical set of the subproblems.}
		\label{fig:reducedObjExt}
	\end{figure}
\end{example}

The above examples are obviously very simple such that the relation between $P$, $P_E$, $P_{\mathsf{int}}$ and $P_0$ is relatively easy to see. We will now consider more complicated examples:

\begin{example} \label{ex:complicatedReg}
	Consider the MOP $\min_{x \in \mathbb{R}^2} f(x)$ with
	\begin{equation*}
		f(x) := \begin{pmatrix}
			x_1^4 + x_2^4 \\
			(x_1 - 1/3)^6 + (x_2 - 1/3)^2 \\
			(x_1 - 2/3)^2 + (x_2 - 2/3)^4 \\
			0.25 (x_1 - 1)^2 + (x_2 - 1)^4
		\end{pmatrix}.
	\end{equation*}
	By construction the hessian matrices of the individual objectives are diagonal and it is easy to see that $D_x \tilde{F}(x,\alpha)$ is invertible for all $(x,\alpha) \in \mathbb{R}^2 \times \Delta^{3}$, so the assumption (\ref{eq:assum_DxF_reg}) is satisfied. The Pareto critical set is shown in Figure \ref{fig:complReg_Pareto} (a), where the black dots indicate the critical points of each objective individually. Figure \ref{fig:complReg_Pareto} (b) shows the solutions to all possible 2-objective subproblems. Figure \ref{fig:complReg_colors} shows which Pareto critical set corresponds to which 2-objective subproblem. \\
	In contrast to Example \ref{ex:reducedObjExt} we see that it is possible for the Pareto critical set of a subproblem to be partly on $P_E$ and partly inside of $P$, for example in the case of objective $1$ and $3$ (green). \\
	Additionally, we see intersections of critical sets outside of the solution of the 1-objective subproblems, for example the intersection of the red and the blue line. This indicates that these points have two KKT multipliers with different zero components.
	\begin{figure}[ht] 
		\begin{subfigure}[t]{.5\textwidth}
			\centering
			\includegraphics[scale=0.5]{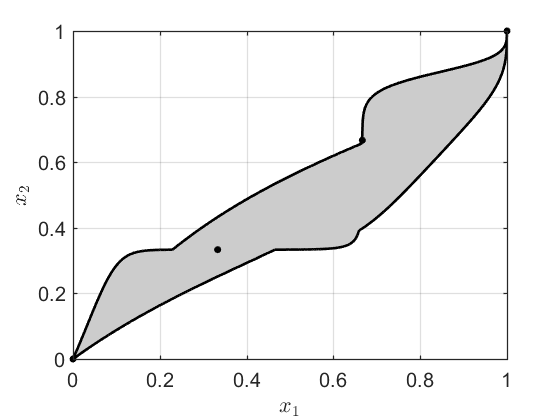}
			\caption{$P$ (gray) and $P_E$ (black)}
		\end{subfigure}
		\begin{subfigure}[t]{.49\textwidth}
			\centering
			\includegraphics[scale=0.5]{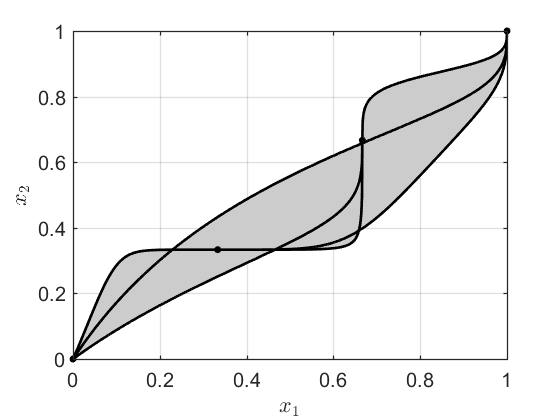}
			\caption{$P$ (gray) and the solutions of the subproblems (black)}
		\end{subfigure}
		\caption{Pareto critical sets for Example \ref{ex:complicatedReg}.}
		\label{fig:complReg_Pareto}
	\end{figure}
	\begin{figure}[t]
		\centering
		\includegraphics[scale=0.7]{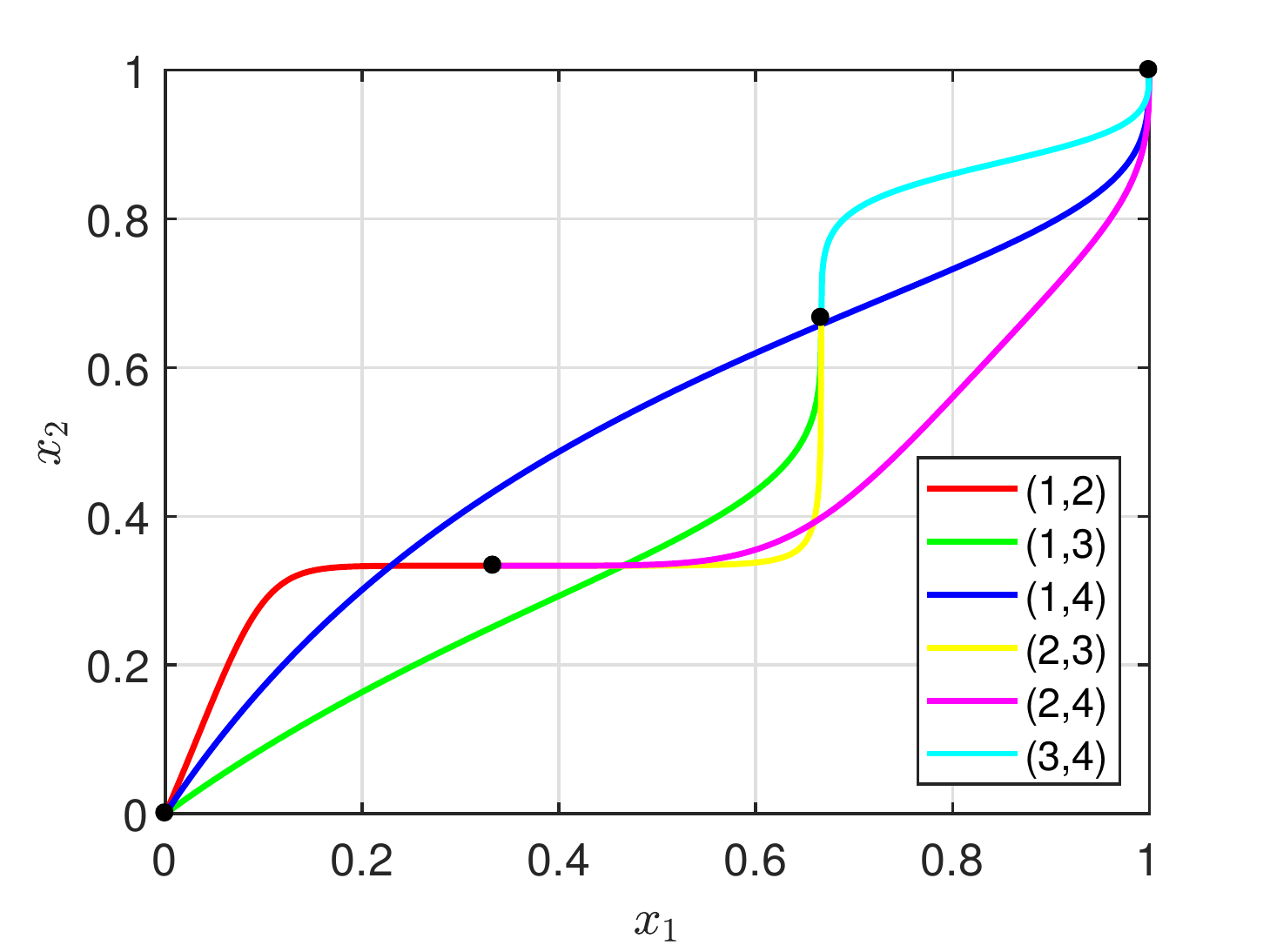}
		\caption{Pareto critical set for each 2-objective subproblem}
		\label{fig:complReg_colors}
	\end{figure}
\end{example}

The previous example indicates that if we have a ``kink'' in $P_E$ then it is either a critical point of a $(m-1)$-objective subproblem (with $m$ as in Corollary \ref{cor:main1cor}) or a critical point with multiple KKT multipliers on the boundary of the standard simplex. The classification of those non-differentiabilities in the boundary of the Pareto critical set highlights an additional advantage of the approach presented in this paper.

As shown in Lemma \ref{lem:localManifold}, we can remove points that do not satisfy the assumption (\ref{eq:assum_DxF_reg}) from the extended Pareto critical set $\mathcal{M}$ to still have a manifold structure. Since the techniques we used in Section \ref{sec:StructureParetoSet} and \ref{sec:CalculationParetoSetViaSubproblems} were basically of local nature, this encourages that our results can also be applied to MOPs that do not satisfy assumption (\ref{eq:assum_DxF_reg}). This will be done in the following examples.

\begin{example} \label{ex:complicatedIrreg}
	Consider the MOP $\min_{x \in \mathbb{R}^2} f(x)$ with
	\begin{equation*}
		f(x) := \begin{pmatrix}
			0.5 (x_1 - 1)^2 + x_2^2 \\
			2 x_1^2 + 2 (x_2 - 1)^2 \\
			2 (x_1 + 1)^2 + x_2^5 \\
			-2 x_1^3 + 2(x_2 + 1)^2
		\end{pmatrix}.
	\end{equation*}
	Since each objective function is polynomial, it is still (relatively) easy to calculate the Pareto critical set analytically. The part of interest is shown in Figure~\ref{fig:complIrreg_Pareto}~(a). Figure~\ref{fig:complIrreg_Pareto}~(b) additionally shows the Pareto critical sets to all 2-objective subproblems. Figure~\ref{fig:complIrreg_colors} shows which Pareto critical set corrseponds to which 2-objective subproblem. \\
	One can see that for this example, it is not necessary to consider the subproblem $\{1,4\}$ since its Pareto critical set is in the interior -- i.e.~it does not lie on the edge -- of the actual Pareto critical set. (Strictly speaking $P_E \cap P^{\{1,4\}} = \{ (0,-1) \}$, but this point is also in $P^{\{2,4\}}$ and $P^{\{3,4\}}$, so it is already covered). Other than that, all subproblems have to be solved to obtain $P_E$.
	\begin{figure}[ht] 
		\begin{subfigure}[t]{.5\textwidth}
			\centering
			\includegraphics[scale=0.525]{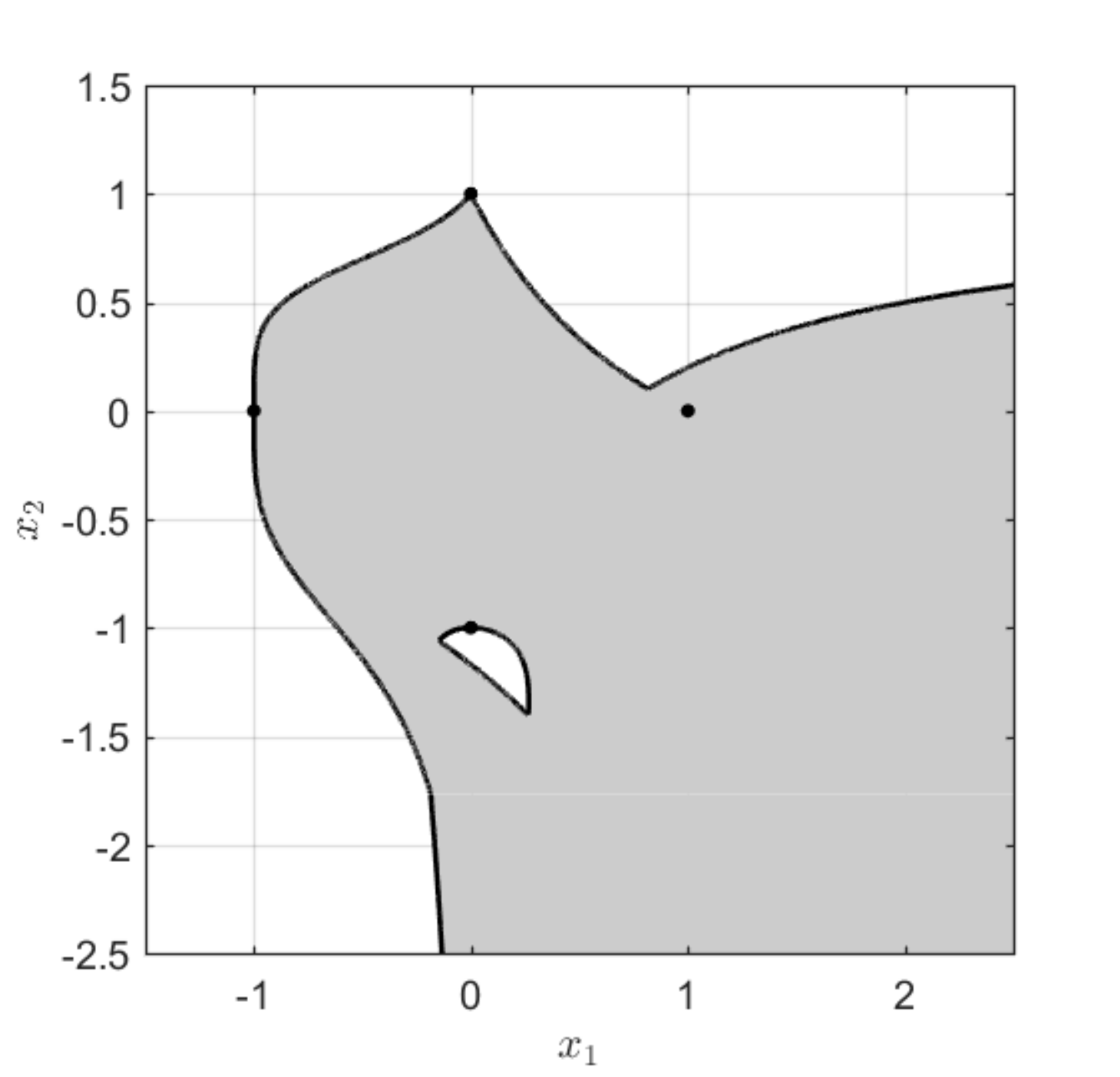}
			\caption{$P$ (gray) and $P_E$ (black)}
		\end{subfigure}
		\begin{subfigure}[t]{.49\textwidth}
			\centering
			\includegraphics[scale=0.525]{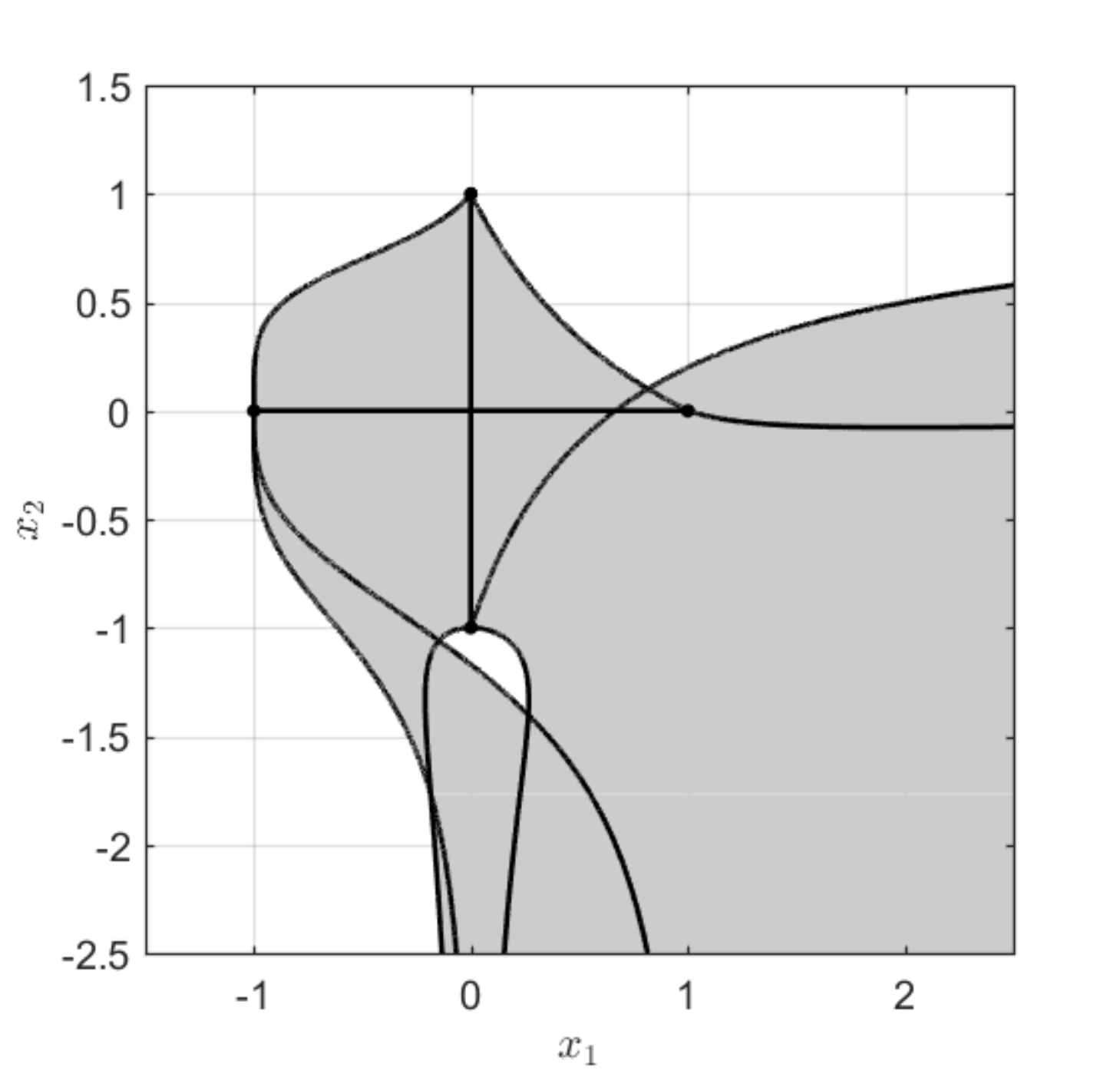}
			\caption{$P$ (gray) and the solutions of the subproblems (black)}
		\end{subfigure}
		\caption{Pareto critical sets for Example \ref{ex:complicatedIrreg}.}
		\label{fig:complIrreg_Pareto}
	\end{figure}
	\begin{figure}[t]
		\centering
		\includegraphics[scale=0.7]{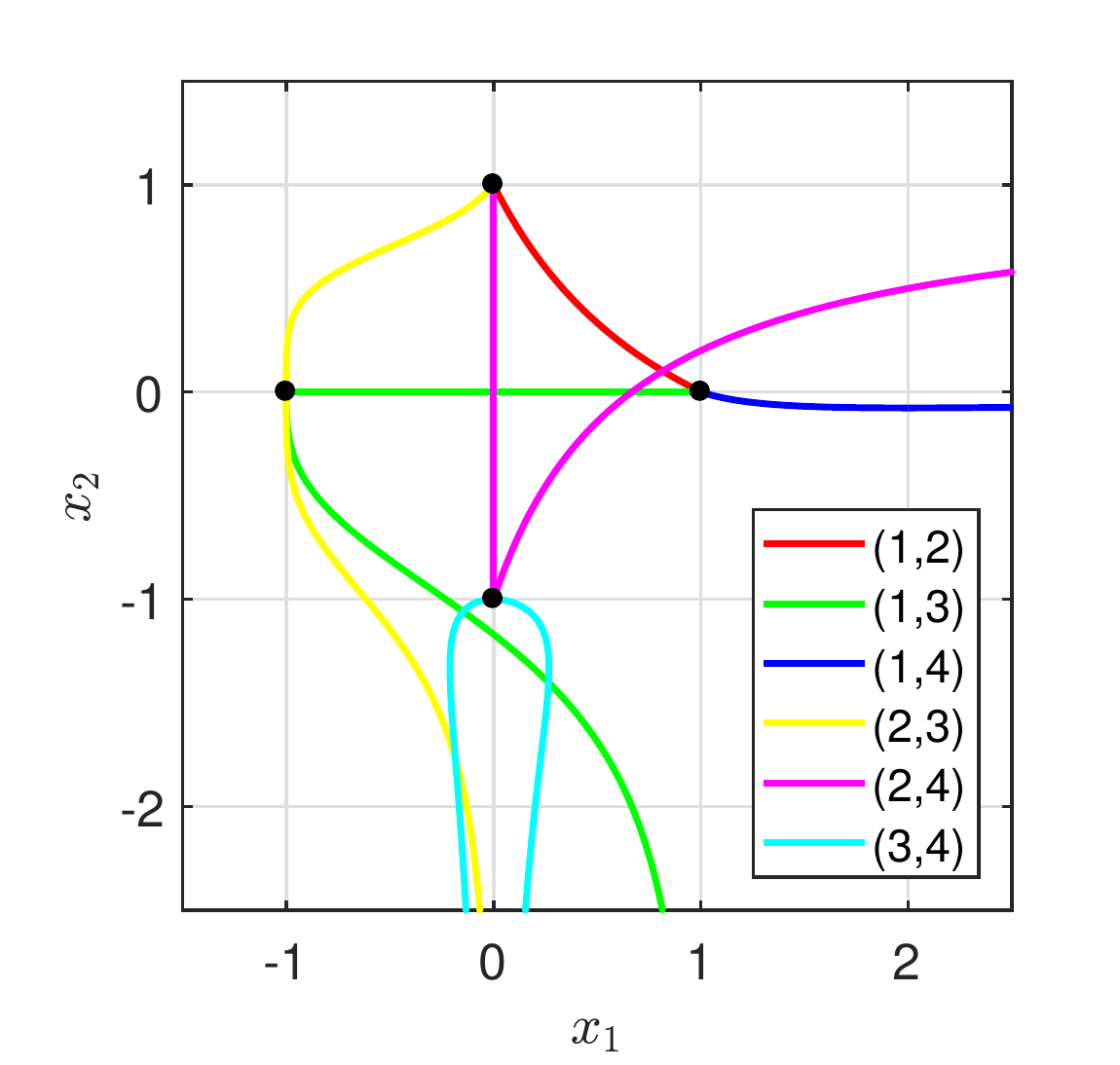}
		\caption{Pareto critical set for each 2-objective subproblem in Example \ref{ex:complicatedIrreg}.}
		\label{fig:complIrreg_colors}
	\end{figure}
\end{example}

The following example from \cite[Example 4.1.5]{P2017} shows how the Pareto critical set can be derived from $P_E$ if additional properties of the objective function are known, like in this case boundedness.

\begin{example} \label{ex:disc}
	Consider the MOP $min_{x \in \mathbb{R}^2} f(x)$ with
	\begin{equation*}
		f(x) := \begin{pmatrix}
			-6 x_1^2 + x_1^4 + 3 x_2^2 \\
			(x_1 - 0.5)^2 + 2 (x_2 - 1)^2 \\
			(x_1 - 1)^2 + 2 (x_2 - 0.5)^2
		\end{pmatrix}.
	\end{equation*}
	Figure~\ref{fig:disc_color} shows the Pareto critical sets of the three 2-objective subproblems. It is possible to show that $rk(Df(x)) = 2$ for all $x \in P_{\mathsf{int}}$ so that Lemma \ref{lem:PEP0} can be applied. The Pareto critical set of this problem is bounded so we know that it is given by the interior (and boundary) of the two disconnected sets depicted in Figure \ref{fig:disc_color}.
	\begin{figure}[t]
		\centering
		\includegraphics[scale=0.5]{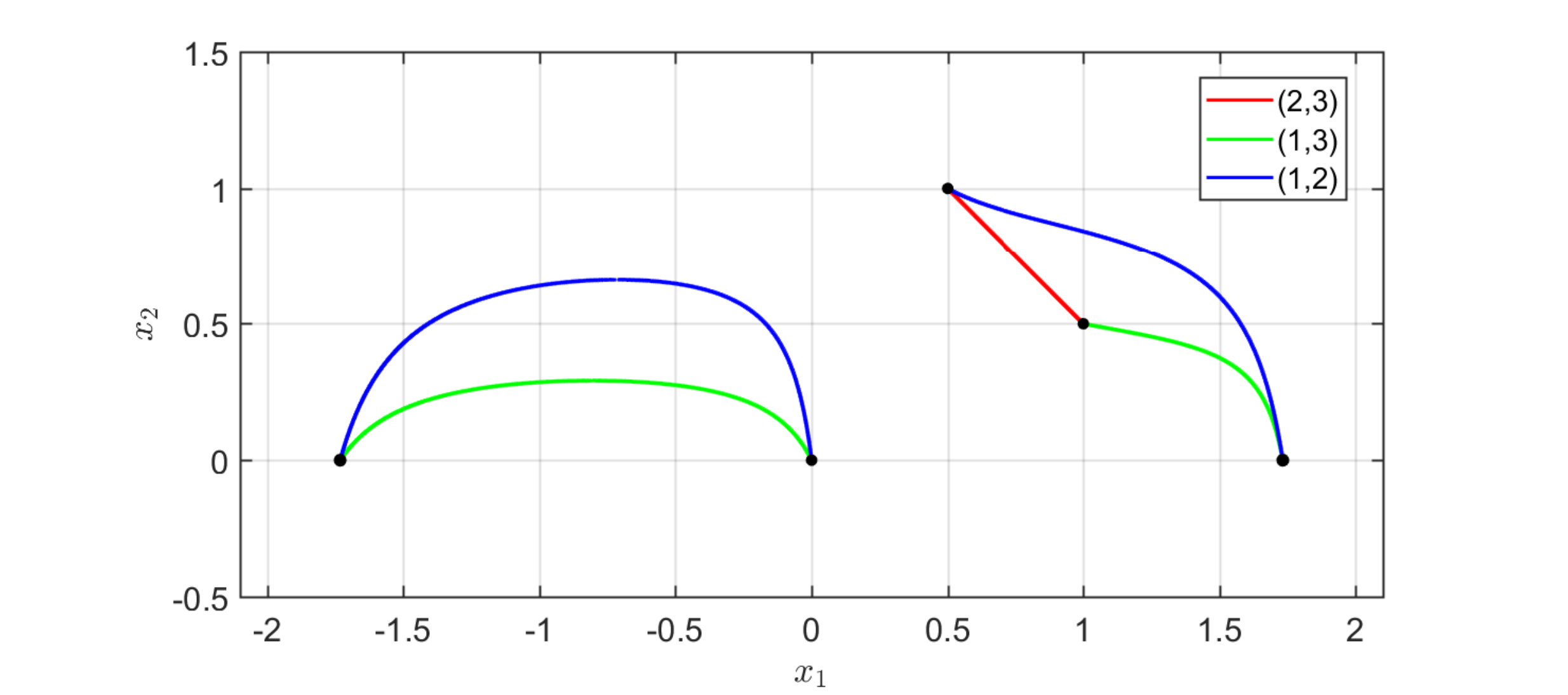}
		\caption{Pareto critical sets for the three 2-objective subproblems in Example \ref{ex:disc}.}
		\label{fig:disc_color}
	\end{figure}
\end{example}

\clearpage

\section{Conclusion and outlook} \label{sec:Conclusion}
\subsection{Conclusion}
We have presented results about the structure of the set of Pareto critical points with respect to the corresponding KKT multipliers. Our main result is that the boundary of the Pareto critical set can be covered by Pareto critical sets of subproblems in which we only consider a subset of the full objective function. The number of components required for the subproblems depends on the rank of the Jacobian of the objective function. To prove our main result we have investigated the relationship between tangent cones of the Pareto critical set and the tangent spaces of the manifold of Pareto critical points extended by their KKT multipliers. The boundary of the Pareto critical set can give useful insight into the global Pareto set or -- if it coincides with the topological boundary -- even describe it completely. 

\subsection{Outlook}
First of all there are some theoretical aspects that could be investigated further, for example the relationship between $\partial P$ and $P_E$ and what the requirements are such that $P_0 = P_E$. Additionally, since we only worked with a first order necessary optimality condition for Pareto optimality, it may be possible to extend our results by using nondominance tests or information about higher order derivatives. We also only considered the unconstrained case, so it will be interesting to see how our results convert to equality and inequality constrained MOPs. Finally, our results may be used to build new methods for solving MOPs via computation of the boundary of the Pareto critical set. A first approach in this direction has also been discussed in \cite{P2017}, where the well-known $\epsilon$-constraint method was generalized to considering subproblems with fewer objective functions instead of scalar problems.

\bibliography{literatur}
\bibliographystyle{abbrv}

\end{document}